\newtheorem{theorem}{Theorem}[section]
\newtheorem{lemma}[theorem]{Lemma}
\newtheorem{proposition}[theorem]{Proposition}
\newtheorem{corollary}[theorem]{Corollary}
\newtheorem{question}[theorem]{Question}
\newtheorem{example}[theorem]{Example}
\newtheorem{definition}[theorem]{Definition}
\numberwithin{equation}{section}
\newcounter{ecount}
\begin{document}

\newcommand{\cc}{\mathfrak{c}}
\newcommand{\N}{\mathbb{N}}
\newcommand{\BB}{\mathbb{B}}
\newcommand{\C}{\mathbb{C}}
\newcommand{\Q}{\mathbb{Q}}
\newcommand{\R}{\mathbb{R}}
\newcommand{\T}{\mathbb{T}}
\newcommand{\st}{*}
\newcommand{\PP}{\mathsf{P}}
\newcommand{\lin}{\left\langle}
\newcommand{\rin}{\right\rangle}
\newcommand{\SSS}{\mathbb{S}}
\newcommand{\forces}{\Vdash}
\newcommand{\dom}{\text{dom}}
\newcommand{\osc}{\text{osc}}
\newcommand{\F}{\mathcal{F}}
\newcommand{\A}{A}
\newcommand{\I}{\mathcal{I}}
\newcommand{\B}{\mathcal{B}}
\newcommand{\CC}{\mathcal{C}}

\author{Tristan Bice}
\address{Institute of Mathematics, Polish Academy of Sciences,
ul. \'Sniadeckich 8,  00-656 Warszawa, Poland}
\email{\texttt{Tristan.Bice@gmail.com}}

\author{Piotr Koszmider}
\address{Institute of Mathematics, Polish Academy of Sciences,
ul. \'Sniadeckich 8,  00-656 Warszawa, Poland}
\email{\texttt{piotr.koszmider@impan.pl}}

\subjclass[2010]{}
\title[C*-Algebras With and Without $\ll$-Increasing Approximate Units]{C*-Algebras With and Without\\ $\ll$-Increasing Approximate Units}
\keywords{C*-algebra, approximate unit, scattered, almost finite (AF),  tree}
\subjclass[2010]{03E35, 46L05, 46L85, 54G12}

%03E35 Consistency and independence results
%46L05 General theory of C*-algebras
%46L85 Noncommutative topology
%54G12 Scattered spaces

\begin{abstract} 
For elements $a, b$ of  a C*-algebra we denote $a=ab$ by $a\ll b$.
We show that all $\omega_1$-unital C*-algebras have $\ll$-increasing 
approximate units, extending a classical result for $\sigma$-unital C*-algebras.  
We also construct (in ZFC) the first examples of C*-algebras with no $\ll$-increasing approximate unit. 
One of these examples is a C*-subalgebra of $\B(\ell_2(\omega_1))$.

These examples are, by necessity, not approximately finite dimensional (AF), 
but some of them are still scattered and so locally finite dimensional (LF) in the sense of Farah and Katsura.  
It follows that there are scattered C*-algebras which are not AF. 

    We further show that the existence of a C*-subalgebra of $\B(\ell_2)$ with no $\ll$-increasing approximate unit
or the existence of an LF but not AF C*-subalgebra of $\B(\ell_2)$  are
     independent of ZFC.
Our examples also show that an extension of an AF algebra by an AF algebra
does not have to be an AF algebra in the nonseparable case.
\end{abstract}

\maketitle

\section{Introduction}

This paper explores certain types of behaviour of nonseparable C*-algebras which have started to be discovered 
only recently and which are not decidable using standard mathematical methods
but rather depend on additional infinitary combinatorial principles. 
A counterexample to Naimark's problem (\cite{aw-naimark}),  pure states which are not multiplicative on any masa
(\cite{aw-masas})
and the existence (or lack thereof) of outer automorphisms in the Calkin algebra (\cite{pw-calkin, ilijas-calkin}) are
some of the very recent milestones of this line of research.

Specifically, we look at certain kinds of approximate units, which have long been a fundamental tool in C*-algebra theory.  First, let $A$ be a C*-algebra and $(\Lambda, \prec)$ a directed set. Recall that a net
$(u_\lambda)_{\lambda\in \Lambda}\subseteq A^1_+$ in the positive unit ball is an \emph{approximate unit} if, for all $a\in A$,
$\|a-au_\lambda\|\rightarrow0$ for $\lambda\in \Lambda$.
As in \cite[II.3.4.3]{Blackadar2017}, we also define a relation $\ll$ on $A$ by
\[a\ll b\qquad\hbox{if and only if}\qquad a=ab.\]
\begin{definition}\label{def-unital}
We call a net $(u_\lambda)_{\lambda\in \Lambda}$ \emph{$\ll$-increasing} if 
$u_\lambda\ll u_\mu$ for $\lambda\prec\mu$.
We call a C*-algebra $A$
\begin{enumerate}
\item \emph{$\ll$-unital} if $A$ contains a $\ll$-increasing approximate unit,\footnote{In Blackadar's textbook
\cite[Definition II.4.1.1]{Blackadar2017}, approximate units that are $\ll$-increasing are called \emph{almost idempotent}, 
but since they have nothing to do with idempotents in general, we adopt
our terminology as more descriptive.}
\item\label{sigmaunital} \emph{$\sigma$-unital} if $A$ has a countable approximate unit,
\item\label{kappaunital} \emph{$\kappa$-unital} if $A$ contains an approximate unit of size $\leq\kappa$, where $\kappa$ is
an infinite cardinal.
\end{enumerate}
\end{definition}
In fact, in most of the paper we work with equivalent notions for subsets
 rather than nets \textendash\, see Proposition \ref{dcof=au}, 
Proposition \ref{cf+dir=inc} and Definition \ref{def-waybelow-unit}.

\begin{proposition}\label{sigma-unital}\cite[Corollary II.4.2.5.]{Blackadar2017}
If a C*-algebra is $\sigma$-unital, then it is $\ll$-unital.
\end{proposition}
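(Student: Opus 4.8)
The plan is to start from an arbitrary countable approximate unit and refine it into a $\ll$-increasing one, using the continuous functional calculus to ``pad'' each element so that it absorbs the previous one. Concretely, suppose $A$ is $\sigma$-unital, and fix a countable approximate unit $(e_n)_{n\in\N}\subseteq A^1_+$. By passing to suitable convex combinations or tails (and using that the $e_n$ may be taken to approximate any given finite set to within $1/n$), I may assume $(e_n)$ is increasing in the usual order $e_n\leq e_{n+1}$; this is the standard construction of an increasing approximate unit for a $\sigma$-unital algebra. The key point I would then exploit is the following elementary functional-calculus fact: if $0\leq a\leq b\leq 1$ in a C*-algebra and $f\colon[0,1]\to[0,1]$ is continuous with $f\equiv 1$ on a neighbourhood of $1$, then $a$ is close to $af(b)$ when $a$ is close to $1$ on the relevant spectral part; more usefully, for the specific piecewise-linear $f_\varepsilon$ that is $0$ on $[0,\varepsilon]$, linear on $[\varepsilon,2\varepsilon]$ and $1$ on $[2\varepsilon,1]$, one has $g\ll f_\varepsilon(b)$ whenever $g$ lies in the hereditary subalgebra cut out by the spectral projection of $b$ above $2\varepsilon$, and $\|b - f_\varepsilon(b)b\|$ is controlled.

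The main step is to build the new sequence recursively. Having arranged $(e_n)$ increasing, I would choose a rapidly decreasing sequence $\varepsilon_n\downarrow 0$ and set $u_n = f_{\varepsilon_n}(e_{m_n})$ for a sufficiently fast-growing subsequence $m_n$, chosen so that two things hold simultaneously: first, $e_{m_{n-1}}$ (hence $u_{n-1}$, which sits below it in the functional-calculus ordering of the abelian algebra generated by $e_{m_{n-1}}$ after a further adjustment) is supported where $e_{m_n}\geq 2\varepsilon_n$, so that $u_{n-1}\ll u_n$; and second, $\|a - a u_n\| \leq \|a - a e_{m_n}\| + \|a\|\,\|e_{m_n} - e_{m_n}f_{\varepsilon_n}(e_{m_n})\| \to 0$ for every $a\in A$, so that $(u_n)$ is still an approximate unit. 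The technical heart is making the first condition precise: $u_{n-1}\ll u_n$ is the identity $u_{n-1} = u_{n-1}u_n$, i.e. $f_{\varepsilon_{n-1}}(e_{m_{n-1}}) = f_{\varepsilon_{n-1}}(e_{m_{n-1}})\,f_{\varepsilon_n}(e_{m_n})$, and since $e_{m_{n-1}}\leq e_{m_n}$ do not commute with each other in general one cannot argue purely in one abelian subalgebra. This is the step I expect to be the main obstacle.

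To get around the noncommutativity I would use the standard trick of interpolating with a third function: choose $g_n$ continuous with $g_n f_{\varepsilon_{n-1}} = f_{\varepsilon_{n-1}}$ and $g_n\leq f_{\varepsilon_n/2}$, and observe that $f_{\varepsilon_n}(e_{m_n})$ acts as a unit on the hereditary subalgebra generated by $g_n(e_{m_{n-1}})$ provided $e_{m_n}$ dominates $e_{m_{n-1}}$ closely enough on the support of $g_n$ — which one arranges by first replacing each $e_n$ by $h(e_n)$ for a fixed $h$ flattening the top of the spectrum to exactly $1$, a routine modification of the increasing approximate unit that makes the ``absorbing'' relations $a\ll e_n$ hold outright for $a$ in an increasing chain of hereditary subalgebras whose union is dense. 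Once that normalisation is in place, the relation $u_{n-1}\ll u_n$ becomes an exact identity rather than an approximate one, the approximate-unit property follows from the triangle-inequality estimate above, and $(u_n)_{n\in\N}$ indexed by $(\N,\leq)$ is the desired $\ll$-increasing approximate unit. I would present the normalisation lemma (existence of an increasing approximate unit consisting of elements that are actual units on a dense union of hereditary subalgebras) as the one non-trivial ingredient and then the recursion as a short verification.
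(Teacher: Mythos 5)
There is a genuine gap, and it sits exactly where you predicted: the passage from approximate to exact absorption. From $e_{m_{n-1}}\leq e_{m_n}$ and functional calculus applied \emph{separately} to each element, one can only conclude that spectral pieces of $e_{m_{n-1}}$ lying near $1$ are \emph{approximately} absorbed by $e_{m_n}$ (e.g.\ $\|p-pe_{m_n}\|\leq\sqrt{\delta}$ for the spectral projection $p$ of $e_{m_{n-1}}$ above $1-\delta$); it does not give the exact identity $f_{\varepsilon_{n-1}}(e_{m_{n-1}})=f_{\varepsilon_{n-1}}(e_{m_{n-1}})f_{\varepsilon_n}(e_{m_n})$, because the two elements need not commute. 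Your proposed repair is circular: saying that $f_{\varepsilon_n}(e_{m_n})$ ``acts as a unit on the hereditary subalgebra generated by $g_n(e_{m_{n-1}})$'' is literally equivalent to $g_n(e_{m_{n-1}})\ll f_{\varepsilon_n}(e_{m_n})$, which is the relation you are trying to establish; and the ``normalisation lemma'' (an increasing approximate unit whose members are genuine units on a dense increasing union of hereditary subalgebras) is essentially a restatement of the conclusion AU$\Rightarrow\ll$-unital rather than an available ingredient. Converting approximate absorption into exact absorption in the noncommutative setting genuinely requires twisting by a unitary close to $1$ \textendash\ that is precisely the content of Theorem \ref{<<h<<} and Corollary \ref{<<d<<}, which this paper develops in order to push beyond the $\sigma$-unital case; it is not a routine functional-calculus adjustment.

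The statement itself is proved much more cheaply, and this is the route behind the cited \cite[Corollary II.4.2.5]{Blackadar2017}: a $\sigma$-unital algebra has a strictly positive element, e.g.\ $h=\sum_n 2^{-n}e_n$, and one works entirely inside the commutative subalgebra $C^*(h)\cong C_0(\sigma(h))$. Setting $u_n=f_n(h)$ with $f_n$ vanishing on $[0,\tfrac{1}{n+1}]$ and equal to $1$ on $[\tfrac{1}{n},1]$ gives $f_nf_{n+1}=f_n$ as functions, hence $u_n\ll u_{n+1}$ exactly, and strict positivity of $h$ makes $(u_n)$ an approximate unit. All the noncommutativity you were fighting disappears because every $u_n$ lives in one singly generated commutative subalgebra. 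Your error analysis for the approximate-unit property (the triangle-inequality estimate) is fine; the construction of the $\ll$-chain is what needs to be replaced.
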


Thus it is natural to ask if all C*-algebras are $\ll$-unital. 
 The first main result of this paper strengthens the above proposition to the first uncountable cardinal $\omega_1$:

\begin{theorem}\label{main-omega1}
If a C*-algebra is $\omega_1$-unital, then it is $\ll$-unital.
\end{theorem}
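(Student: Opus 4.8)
The plan is to drop the net formalism and instead construct, by a transfinite recursion of length $\omega_1$, a $\ll$-directed subset $S\subseteq A^1_+$ that is an approximate unit in the subset sense of Propositions~\ref{dcof=au} and~\ref{cf+dir=inc}. Since $A$ is $\omega_1$-unital we may fix a cofinal family $\{d_\alpha:\alpha<\omega_1\}$ in the canonical directed set $\Lambda=\{a\in A_+:\|a\|<1\}$; if its cofinality is countable we are done by Proposition~\ref{sigma-unital}, so assume not. By a routine estimate (using only $\ll$-directedness of $S$ and cofinality of $\{d_\alpha\}$ in $\Lambda$) it is enough that for each $\alpha<\omega_1$ and $\varepsilon>0$ some $u\in S$ satisfies $\|d_\alpha-d_\alpha u\|<\varepsilon$; this forces $\|a-au\|\to0$ along $(S,\ll)$ for every $a\in A$.

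I would build an increasing, continuous chain $(S_\alpha)_{\alpha<\omega_1}$ of \emph{countable} $\ll$-directed subsets of $A^1_+$ with $S_{\alpha+1}$ containing, for every $\varepsilon>0$, an element $\ll$-dominating $d_\alpha$ to within $\varepsilon$, and then put $S=\bigcup_{\alpha<\omega_1}S_\alpha$. At limit stages there is nothing to do: an increasing union of $\ll$-directed sets is $\ll$-directed, and it stays countable precisely because proper initial segments of $\omega_1$ are countable. So everything happens at a successor step, where $S_\alpha$ (being countable and $\ll$-directed) admits a cofinal $\ll$-increasing sequence $(t_n)_n$, and $E:=C^*(\{t_n\}_n\cup\{d_\alpha\})$ is separable, hence $\sigma$-unital. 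The step then amounts to a \emph{relative form of Proposition~\ref{sigma-unital}}: \emph{given a $\sigma$-unital C*-algebra $E$ and a $\ll$-increasing sequence $(t_n)_n$ in $E^1_+$, there is a $\ll$-increasing approximate unit $(f_n)_n$ of $E$ with $t_n\ll f_n$ for all $n$.} Granting this, $S_{\alpha+1}:=S_\alpha\cup\{f_n\}_n$ is again countable and $\ll$-directed (any two of its members lie $\ll$-below some $f_n$, using that $(t_n)$ is cofinal in $S_\alpha$), and it handles $d_\alpha$ since $(f_n)$ is an approximate unit of $E\ni d_\alpha$.

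The heart of the matter is thus this relative statement, i.e.\ running Blackadar's functional-calculus argument while \emph{prescribing the bottom} $(t_n)$ of the approximate unit. The requirement $t_n\ll f_n$ comes essentially for free by insisting that $f_n=t_{n+1}+(1-t_{n+1})^{1/2}x_n(1-t_{n+1})^{1/2}$ for some $x_n\in E^1_+$: from $t_n\ll t_{n+1}$ one gets $t_n(1-t_{n+1})^{1/2}=0$, whence $t_nf_n=t_n$ whatever $x_n$ is, and $f_n$ is automatically a positive contraction in $E$. The work is in the choice of the $x_n$: taking $x_n=g_{k(n)}(h)$ for a strictly positive $h\in E^1_+$, functions $g_k\nearrow1$ with $g_kg_{k+1}=g_k$, and $k(n)\to\infty$ fast enough, one arranges on one hand that $(f_n)$ is an approximate unit of $E$ (via $\|a-af_n\|\le\|a(1-t_{n+1})^{1/2}(1-g_{k(n)}(h))(1-t_{n+1})^{1/2}\|$, which is small once $k(n)$ is large relative to $t_{n+1}$) and, more delicately, that $f_n\ll f_{n+1}$ holds \emph{exactly} --- equivalently that $f_{n+1}$ is a unit for $C^*(f_n)$ --- by interlacing the ``$t_n$-layer'' and the ``$g_k(h)$-layer'', as one already sees in the commutative test case $E=C_0(0,2]\supseteq C=C_0((0,1))$, where it is enough to drop each threshold of $g_{k(n+1)}$ below the support of $f_n$. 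Finally, this explains why $\omega_1$ is sharp for the method: over $\omega_2$ the intermediate sets $S_\alpha$ would reach size $\omega_1$ and the algebras $E$ would stop being $\sigma$-unital, so the relative form of Proposition~\ref{sigma-unital} would no longer be available.
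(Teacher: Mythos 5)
Your outer architecture coincides with the paper's: a length-$\omega_1$ recursion producing an increasing chain of countable $\ll$-directed sets, with all the difficulty pushed into a ``relative'' extension lemma for separable, hence $\sigma$-unital, subalgebras. Your relative form of Proposition \ref{sigma-unital} is equivalent to the paper's Corollary \ref{SigmaExtension} (every countable $\ll$-directed $B\subseteq A^1_+$ in a $\sigma$-unital $A$ extends to a countable $\ll$-unit), and granting it, your bookkeeping at successor and limit stages is correct. The statement you isolate is true, but your proof of it has a genuine gap, located exactly where you say the heart of the matter is.

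The ansatz $f_n=t_{n+1}+(1-t_{n+1})^{1/2}x_n(1-t_{n+1})^{1/2}$ does give $t_n\ll f_n$ and, with $x_n=g_{k(n)}(h)$ and $k(n)$ large, the approximate-unit property; but the exact relation $f_nf_{n+1}=f_n$ unwinds to
\[(1-t_{n+1})^{1/2}g_{k(n)}(h)(1-t_{n+1})^{1/2}\cdot(1-t_{n+2})^{1/2}\bigl(1-g_{k(n+1)}(h)\bigr)(1-t_{n+2})^{1/2}=0,\]
and no choice of thresholds forces this, because $t_{n+2}$ and $h$ need not commute: dropping the thresholds of $g_{k(n+1)}$ kills $g_{k(n)}(h)(1-g_{k(n+1)}(h))$, but the conjugation by the non-commuting factors $(1-t_{n+2})^{1/2}$ destroys the vanishing. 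The commutative test case is precisely the case where this difficulty is absent. Moreover the obstruction is not merely computational: since $f_{n+1}\geq t_{n+2}$ in your ansatz, the step requires an element $\ll$-above $f_n$ and $\geq t_{n+2}$, and such an element can fail to exist for a badly chosen $f_n$. For instance, in the algebra $D$ of \eqref{Ddef} no $g\in D^1_+$ satisfies both $p\ll g$ and $g\geq q$: the first forces $g_mP_0=P_0$, the second $g_mP_{1/m}=P_{1/m}$, hence $g_m=1$ for all $m$ with $P_{1/m}\neq P_0$, contradicting $\lim g_m\in\C P_0$ \textendash\ the same dimension-drop phenomenon as Lemma \ref{no-way-above}. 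So the choice of $x_n$ must anticipate the whole tail $(t_m)_{m>n}$, and you give no recipe that does. This is exactly why the paper does not build the $f_n$ directly: its proof of Corollary \ref{SigmaExtension} fixes a $\ll$-increasing approximate unit $(a_n)$ of $E$ in advance and then uses the unitary perturbation theorem (Theorem \ref{<<h<<} via Corollary \ref{<<d<<}) to conjugate the given $\ll$-increasing sequence by unitaries close to $1$, chosen in subalgebras where they commute with the already-placed elements so that the infinite product converges and earlier elements are undisturbed, landing each conjugate exactly $\ll$-below some $a_{n_k+1}$. To complete your argument you would need to import that machinery (or some substitute for it) to make the chaining $f_n\ll f_{n+1}$ exact.
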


This follows from a strengthening of the classical result that any $\sigma$-unital $A$ is $\ll$-unital, which may be of independent interest even for separable C*-algebras.  
Specifically, we show that any countable $\ll$-increasing approximate 
unit of a C*-subalgebra $B$ of a $\sigma$-unital $A$ extends to such an approximate unit for $A$. 
 This, in turn, relies on extending a classical result on the unitary equivalence 
of close projections \textendash\, see \cite[II.3.3.5]{Blackadar2017}.  
Specifically, we compare the non-symmetric distances $\mathbf{d}$ 
and $\mathbf{u}$ where $\mathbf{d}(a,b)=\|a-ab\|$ measures the 
degree to which $\ll$ holds and $\mathbf{u}$ measures how far
 away from $1$ we must choose a unitary in $\widetilde{A}$ to have $u^*au\ll b$:
\[\mathbf{u}(a,b)=\inf\{\|1-u\|:u\in\widetilde{A},u^*u=1=uu^*,u^*au\ll b\}.\]
It follows from \cite[II.3.3.5]{Blackadar2017} that, for every $\varepsilon>0$, 
there is a $\delta>0$ such that $\mathbf{d}(p,q)<\delta$ 
implies $\mathbf{u}(p,q)<\varepsilon$, for all projections $p$ and $q$. 
 What we require is an extension of this to positive elements, as given in \autoref{<<d<<}.

We also obtain a series of negative results  which refer to C*-algebras of larger densities.
Recall that a  $Q$-set is a subset
of the reals all of whose subsets are relative $G_\delta$-sets.

\begin{theorem}\label{main-negative}\
\begin{enumerate}
\item\label{omega_1} $\B(\ell_2(\omega_1))$ contains a C*-subalgebra of
density $\omega_2$ which is not 
$\ll$-unital, 

\item\label{omega} $\B(\ell_2)$ contains a C*-subalgebra of density $\omega_2$ which is not
$\ll$-unital, if there is an uncountable $Q$-set.
\end{enumerate}
\end{theorem}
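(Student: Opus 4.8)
The plan is to realise both algebras as extensions of one AF algebra by another, inside $\B(\ell_2(\omega_1))$ (resp.\ $\B(\ell_2)$), where the way the extension is glued together is governed by a combinatorial object on $\omega_1$ of ``width'' $\omega_2$. Concretely, for the first assertion I would fix an orthonormal basis $(e_\alpha)_{\alpha<\omega_1}$ of $\ell_2(\omega_1)$, choose a suitable family $\{S_\xi:\xi<\omega_2\}$ of uncountable subsets of $\omega_1$ whose pairwise intersections $S_\xi\cap S_\eta$ are all countable, and let $A\subseteq\B(\ell_2(\omega_1))$ be the C*-algebra generated by the compacts $\mathcal{K}(\ell_2(\omega_1))$ together with the coordinate projections $P_{S_\xi}$. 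Then $A$ has a closed ideal $I$ --- built from the compacts and the projections onto the countable sets $S_\xi\cap S_\eta$ --- such that $I$ is AF while the quotient $A/I$ is commutative (the images of the $P_{S_\xi}$ become pairwise orthogonal), hence AF; so $A$ is an extension of an AF algebra by an AF algebra. Moreover $A$ has density exactly $\omega_2$: $\geq\omega_2$ because the $P_{S_\xi}$ are pairwise at distance $1$, and $\leq\omega_2$ because $A$ is generated by $\omega_2$ elements. The delicate point already here is to pick the $S_\xi$ in ZFC so that both the width-$\omega_2$ requirement and the AF-ness of $I$ hold; the naive recipe ``$S_\xi=$ branches of a tree'' would need a Kurepa tree, so something subtler is required.

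The core of the proof is that such an $A$ has no $\ll$-increasing approximate unit; granting this, \autoref{main-omega1} shows $A$ is not $\omega_1$-unital, and since AF algebras are $\ll$-unital we also obtain the advertised corollaries (an extension of AF by AF that is not AF, and --- when the $S_\xi$ are chosen so that $A$ is scattered --- a scattered C*-algebra that is not AF). So suppose for contradiction that $(u_\lambda)_{\lambda\in\Lambda}$ is a $\ll$-increasing approximate unit of $A$. Passing to the equivalent subset formulation of Propositions~\ref{dcof=au} and~\ref{cf+dir=inc}, one gets an upward $\ll$-directed $U\subseteq A^1_+$ such that for every $a\in A$ there is $u\in U$ with $a\ll u$. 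For each $u\in U$ let $t_u=\{\alpha<\omega_1:ue_\alpha=e_\alpha\}$; this is precisely the support of the largest coordinate projection $\leq u$, and for a coordinate projection one has $P_S\ll u$ iff $S\subseteq t_u$. Since $\{u+I:u\in U\}$ is a $\ll$-increasing approximate unit of the commutative algebra $A/I$, every generator is eventually absorbed: for each $\xi$ there is $u\in U$ with $S_\xi\subseteq t_u$ modulo a countable set, and then --- after correcting $u$ by an element of $I$, which is exactly where the refinement of the close-projections theorem to positive elements in \autoref{<<d<<} is needed, to replace the $u$'s by genuine projections without leaving $A$ --- with $S_\xi\subseteq t_u$ outright for a single $u\in U$. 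Directedness of $U$ under $\ll$ now assembles the sets $t_u$ into one cofinal family threading all the $S_\xi$ at once, and a diagonalisation using Fodor's lemma (exploiting that each $S_\xi$ is uncountable and that any two diverge on a countable set) shows no such thread can exist. This contradiction gives the first assertion.

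For the second assertion the same blueprint is run inside the separable algebra $\B(\ell_2)$, and this is precisely what an uncountable $Q$-set is for. Writing $X=\{x_\xi:\xi<\omega_1\}\subseteq\R$ for the $Q$-set, the property that every $Y\subseteq X$ is a relative $G_\delta$-set supplies, for each such $Y$, a decreasing sequence of open subsets of $\R$ (equivalently a continuous $[0,1]$-valued function) separating $Y$ from $X\setminus Y$; fixing a faithful representation of $C_0(\R)$ on $\ell_2$ turns these into operators $T_Y\in\B(\ell_2)$ that play the role of the projections $P_{S_\xi}$, with the countable-intersection relations replaced by their $Q$-set counterparts. Since an uncountable $Q$-set forces $2^{\aleph_0}\ge\aleph_2$, there is room to carry this out at density $\omega_2$, and the extension structure together with the threading/diagonalisation argument transfer essentially verbatim, producing a C*-subalgebra of $\B(\ell_2)$ of density $\omega_2$ with no $\ll$-increasing approximate unit.

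I expect the main obstacle to be the step that turns the soft hypothesis ``$U$ is $\ll$-directed and $\ll$-cofinal'' into the rigid combinatorial statement ``there is a coherent cofinal thread through $\{S_\xi\}$''. Because $u\ll v$ is an exact identity $u=uv$ rather than an approximate one, there is no slack to absorb errors, so one must understand precisely how $\ll$ interacts with the ideal $I$ and, in particular, how to move from the positive elements $u_\lambda$ to the coordinate projections $P_{t_{u_\lambda}}$ while staying inside $A$; this is the analytic heart and is where \autoref{<<d<<} does the work. A close second is the combinatorial design: producing in ZFC a width-$\omega_2$ family $\{S_\xi\}$ with countable pairwise intersections and no coherent cofinal thread, chosen so that $I$ is genuinely AF, and then verifying that the $Q$-set encoding faithfully transports all of this into $\B(\ell_2)$.
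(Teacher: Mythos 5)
There is a genuine gap, and it is fatal to the whole approach: the algebra you propose is in fact $\ll$-unital, indeed AU. Your generators $P_{S_\xi}$ are pairwise \emph{commuting} projections, and for commuting projections $p,q$ the join $p\vee q=p+q-pq$ already lies in $C^*(p,q)$ and satisfies $p,q\ll p\vee q$. Consequently the family of projections $P_{S\cup F}$, where $S$ ranges over finite unions of the $S_\xi$ and $F$ over finite subsets of $\omega_1$, lies in your algebra $A=C^*(\mathcal K(\ell_2(\omega_1))\cup\{P_{S_\xi}\})$, is $\ll$-directed, and is $\ll$-approximately cofinal (any $*$-polynomial in the generators has each monomial ending in some $P_{S_\xi}$, killed exactly by $1-P_{S\cup F}$ once $S_\xi\subseteq S$, or in a compact $k$, for which $\|k(1-P_{S\cup F})\|\leq\|k(1-P_F)\|\to0$). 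So $A$ has a $\ll$-increasing approximate unit of projections, and no almost-disjointness or Fodor-style argument can produce a contradiction; the ``coherent cofinal thread'' you want to rule out simply exists. The same defect afflicts your $Q$-set version, where the operators $T_Y$ are again commuting functions of a single self-adjoint element. Your invocation of Corollary \ref{<<d<<} is also misplaced: in the paper that result powers the \emph{positive} Theorem \ref{main-omega1}, not the counterexamples.

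The missing idea is a local obstruction to $\ll$-directedness, which cannot occur among commuting projections. The paper manufactures it with the dimension-drop algebra $D\subseteq\ell_\infty(M_2)$ of \eqref{Ddef}: the projections $p=(P_0)_n$ and $q=(P_{1/n})_n$ admit \emph{no} common $\ll$-upper bound in $D^1_+$ (Lemma \ref{no-way-above}), because any such bound would have to be the constant sequence $1$, which violates the dimension drop at infinity. The counterexamples are then C*-subalgebras $A_\F\subseteq\ell_\infty^X(D)$ generated by projections $\PP_i$ taking the values $p$, $q$, $0$ according to a family $\F=\{A_i,B_i\}$ of subsets of $X$; the combinatorial condition \eqref{star} of Lemma \ref{family-algebra} guarantees that for any putative $\ll$-unit there is a coordinate $x$ at which two of its elements are forced into $\C p$ and $\C q$ respectively, so Lemma \ref{no-way-above} applies at that coordinate. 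Part \eqref{omega_1} then needs only a ZFC recursion producing such a family of size $\omega_2$ on $X=\omega_1$ (Example \ref{ad-algebra}); for part \eqref{omega} the paper does not re-run the argument in $\B(\ell_2)$ but instead uses the separated family coming from the $Q$-set to build a subalgebra $B\subseteq\ell_\infty(D)\subseteq\B(\ell_2)$ having $A_\F$ as a quotient, and concludes via Lemma \ref{aiau-quotient} that $B$ cannot be $\ll$-unital since its quotient is not. Your extension-of-AF-by-AF picture is a true feature of these examples (Proposition \ref{extension}), but it is a consequence, not the engine, of the non-$\ll$-unitality.
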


\begin{proof}
See Examples \ref{ad-algebra} and \ref{Bell2}.
\end{proof}

Note that \eqref{omega_1} 
does not require any additional set-theoretic assumptions. 
In \eqref{omega} we know that some extra set-theoretic hypothesis is necessary:

\begin{corollary}\label{independent-l2}
Whether $\B(\ell_2)$ contains C*-subalgebras which are not $\ll$-unital
  is independent of ZFC\footnote{ZFC abbreviates the Zermelo-Fraenkel-Choice  axiomatization of 
set theory providing the standard foundations of mathematics. To say that a sentence is independent
of ZFC means that it cannot be proved nor disproved based on the axioms of ZFC. Also ``a ZFC 
example" will mean an example that can be constructed based on the axioms of ZFC.}.
\end{corollary}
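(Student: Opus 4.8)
The plan is to prove the two halves of the independence separately: that the existence of such a subalgebra is consistent with ZFC, and that its non-existence is consistent with ZFC as well.

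For the consistency of existence, I would invoke the classical fact that it is consistent with ZFC that there is an uncountable $Q$-set \textemdash\ this holds, for instance, under Martin's Axiom together with the negation of the Continuum Hypothesis, and more generally $\mathrm{MA}_{\omega_1}$ implies that every set of reals of size $\omega_1$ is a $Q$-set. In any model of ZFC in which an uncountable $Q$-set exists, Theorem~\ref{main-negative}\eqref{omega} produces a C*-subalgebra of $\B(\ell_2)$ (indeed of density $\omega_2$) that is not $\ll$-unital. Hence the statement ``$\B(\ell_2)$ contains a non-$\ll$-unital C*-subalgebra'' is consistent with ZFC.

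For the consistency of its negation, I would work under CH. Then $\cc=\omega_1$, and since $\B(\ell_2)$ has density character (in fact cardinality) $\cc$, every C*-subalgebra $A\subseteq\B(\ell_2)$ has density at most $\omega_1$. A C*-algebra of density $\leq\kappa$ is $\kappa$-unital \textemdash\ a dense subset of size $\leq\kappa$ yields, via functional calculus over its finite subsets, an approximate unit of size $\leq\kappa$ \textemdash\ so such an $A$ is $\omega_1$-unital, and Theorem~\ref{main-omega1} then gives that $A$ is $\ll$-unital. Thus under CH every C*-subalgebra of $\B(\ell_2)$ is $\ll$-unital, so the statement above fails in that model. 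Combining the two halves, the statement is independent of ZFC.

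The argument involves no substantial obstacle beyond the two theorems already cited; the only care needed is with the (standard) facts that $\B(\ell_2)$ has density $\cc$ and that density $\leq\omega_1$ entails $\omega_1$-unitality, together with the set-theoretic input that an uncountable $Q$-set consistently exists, so that Theorem~\ref{main-negative}\eqref{omega} can be applied.
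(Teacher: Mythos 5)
Your proof is correct and takes essentially the same route as the paper: CH plus Theorem \ref{main-omega1} for one direction, and the consistency of an uncountable $Q$-set plus Theorem \ref{main-negative}(2) for the other. The extra details you supply (that $\B(\ell_2)$ has density $\mathfrak{c}$ and that density $\leq\kappa$ gives $\kappa$-unitality) are correct and merely make explicit what the paper leaves implicit.
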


\begin{proof} If $2^\omega=\omega_1$ (i.e. CH holds) then all 
C*-subalgebras of $\B(\ell_2)$ are $\ll$-unital, by Theorem \ref{main-omega1}.  
On the other hand, if there is  an uncountable 
$Q$-set then there are non-$\ll$-unital C*-subalgebras of $\B(\ell_2)$, by Theorem \ref{main-negative} \eqref{omega}.
\end{proof}

The main idea of our constructions is
to exploit the dimension drop phenomenon which occurs already in certain C*-subalgebras $D$ of $\ell_\infty(M_2)$,
 as described in the second part of Section 2.2.   In particular, we can have two projections $p,q\in D$
such that there is no $r\in D^1_+$ satisfying $p,q\ll r$ (Lemma \ref{no-way-above}).
The algebra $A_\F$ in Theorem \ref{main-negative} \eqref{omega_1}  can be naturally
 described as a C*-subalgebra of $\ell_\infty^X(D)$ (Those elements
 of the product $D^X$ which have bounded norms), where $X$  is a set and
$\F$ appropriate family of subsets of $X$. 
The structure of the family allows us to apply Lemma \ref{no-way-above} on one of
 the coordinates of any hypothetical $\ll$-increasing approximate unit.  
 Moreover, the example of a  C*-subalgebra of $\B(\ell_2)$  with no $\ll$-increasing  approximate unit
 from Theorem \ref{main-negative} \eqref{omega} has the property that
 it is a C*-subalgebra of $\ell_\infty(D)$ which has $A_\F$ as its quotient.
 Here we exploit a property of some family of subsets of $\N$
   obtained from a $Q$-set, using it to `code' the algebra $A_\F$ inside the algebra $\ell_\infty(D)$.
 In particular, all these
examples are $2$-subhomogenous.

Our results are also related to finiteness notions for C*-algebras. Concerning 
AF and LF algebras we follow the terminology of \cite{farah-katsura}
and not the one  of \cite{kusuda} and \cite{lin}:

\begin{definition}\label{af-lf}
A C*-algebra  $A$ is called 
\begin{itemize}
\item  \emph{approximately finite-dimensional (AF)}  if it has a directed family of finite-dimensional
C*-subalgebras with dense union;
\item \emph{locally finite-dimensional (LF)} if, for any finite 
subset  $F$ of $A$ and any $\varepsilon>0$, there is
a finite-dimensional C*-subalgebra $B$ of $A$ with $F\subseteq_\varepsilon B$.
\item \emph{almost unital (AU)} if it has a directed family of unital
C*-subalgebras with dense union;
\item \emph{locally unital (LU)} if, for any finite subset $F$ of $A$ and any $\varepsilon>0$, there is
a unital C*-subalgebra $B$ of $A$ with $F\subseteq_\varepsilon B$.
\end{itemize}
\end{definition}
Here $F\subseteq_\varepsilon B$ means that for every $f\in F$ there is $b\in B$ such that $\|f-b\|<\varepsilon$.  
By `unital C*-subalgebra' we mean a C*-algebra $B\subseteq A$ with its own unit $1_B$, not 
necessarily a unit for $A$.  The only thing we know about each $1_B$ in the larger algebra is that it is a projection. 
 Indeed, each projection $p\in\mathcal{P}(A)$ is a unit for the (hereditary) C*-subalgebra $pAp$.

In \cite[Theorem 2.2]{bratteli}, Bratteli showed that AF and LF are equivalent for separable C*-algebras.  
In \cite[Theorem 1.5]{farah-katsura}, Farah and Katsura showed that AF and LF are equivalent for 
C*-algebras of density $\omega_1$ but not for all larger densities.

As finite-dimensional C*-algebras are unital, every AF-algebra is AU, and certainly every AU-algebra is $\ll$-unital. 
 So our examples of C*-algebras which are not $\ll$-unital can not be AF, but some of them are LF, 
since they are scattered and scattered algebras are LF by a result of Lin (see \cite[Lemma 5.1]
{lin}, where LF is called AF).  Thus our examples differ from the non-AF LF-algebras 
constructed in \cite[\S6]{farah-katsura} which can not be scattered because they 
contain a copy of the CAR algebra (which has no minimal projections).

\begin{corollary}\label{Bell2notLF}
Whether $\B(\ell_2)$ contains C*-subalgebras which are LF but not AF or AU is independent of ZFC.
\end{corollary}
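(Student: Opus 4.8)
The plan is to obtain the independence by pairing two opposite set-theoretic hypotheses, exactly as in the proof of Corollary \ref{independent-l2}. For the consistency of ``$\B(\ell_2)$ has no such subalgebra'' I would assume CH. Then $2^\omega=\cc=\omega_1$, so every C*-subalgebra of $\B(\ell_2)$ has density at most $\omega_1$, and by \cite[Theorem 1.5]{farah-katsura} every LF C*-algebra of density at most $\omega_1$ is AF. Since finite-dimensional C*-algebras are unital, every AF C*-algebra is AU. Hence under CH no C*-subalgebra of $\B(\ell_2)$ is LF but not AF, and none is LF but not AU.

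For the consistency of ``$\B(\ell_2)$ has such a subalgebra'' I would assume there is an uncountable $Q$-set; this holds, for instance, under $\mathrm{MA}+\neg\mathrm{CH}$. Then Theorem \ref{main-negative} \eqref{omega}, realised concretely by Example \ref{Bell2}, gives a C*-subalgebra $B\subseteq\B(\ell_2)$ of density $\omega_2$ which is not $\ll$-unital. As $B$ is scattered, it is LF by \cite[Lemma 5.1]{lin}; and since every AU C*-algebra, hence every AF C*-algebra, is $\ll$-unital, $B$ is neither AU nor AF. Because CH and the existence of an uncountable $Q$-set are each consistent with ZFC, neither the statement nor its negation is a theorem of ZFC.

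The only non-routine point is the verification, carried out in Example \ref{Bell2}, that the relevant $\B(\ell_2)$-algebra is scattered, so that Lin's theorem applies: since that algebra is built as an extension inside $\ell_\infty(D)$ having $A_\F$ as a quotient, one must check that both the ideal and $A_\F$ are scattered and that scatteredness is inherited by such extensions. Everything else is a direct assembly of the cited results, so this is where I expect the main work to lie.
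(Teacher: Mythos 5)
Your overall strategy is the paper's: pair CH (under which \cite[Theorem 1.5]{farah-katsura} forces LF $\Rightarrow$ AF $\Rightarrow$ AU for all subalgebras of $\B(\ell_2)$, since these have density at most $2^\omega=\omega_1$) against a consistent combinatorial hypothesis yielding a non-$\ll$-unital, hence non-AU and non-AF, LF subalgebra of $\B(\ell_2)$. The CH half is fine. The difference is in the second half: the paper invokes Example \ref{Bell2scattered}, which needs a Canadian tree plus a Boolean embedding of $\wp(\omega_1)$ into $\wp(\N)/Fin$ and produces a \emph{scattered} algebra, to which Lin's result (scattered $\Rightarrow$ LF) applies; you instead invoke Example \ref{Bell2}, which needs only an uncountable $Q$-set. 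Your route is legitimate and in fact runs on a weaker hypothesis, because Example \ref{Bell2} establishes LF-ness directly.

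However, your stated justification for LF-ness is wrong, and you should fix it. You write that the algebra $B$ of Example \ref{Bell2} ``is scattered, so it is LF by Lin's theorem,'' and in your last paragraph you claim that Example \ref{Bell2} verifies scatteredness. It does not. The paper is explicit that a $Q$-set alone is not known to give a scattered example \textendash\ that is precisely why Example \ref{Bell2scattered} is introduced separately, under the stronger hypotheses above, with a nontrivial argument that $Ker(\pi)$ embeds into $c_0(D)$. What Example \ref{Bell2} actually proves is that every finitely generated subalgebra $C^*(a_{\xi_1},\dots,a_{\xi_n})$ of $B$ embeds into $D^{3^n}$, which \emph{is} scattered and hence LF by Kusuda's theorem; LF-ness of $B$ then follows since LF is a local property. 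With that correction your argument is complete; as written, the scatteredness claim is an unsupported (and, as far as the paper knows, possibly false) assertion, even though the conclusion you draw from it happens to be recoverable by the argument actually given in Example \ref{Bell2}.
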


\begin{proof} If $2^\omega=\omega_1$ then all C*-subalgebras of $\B(\ell_2)$ which are LF are AF
as well, by \cite[Theorem 1.5]{farah-katsura}.  On the other hand, 
by Example \ref{Bell2scattered} and the above-mentioned result of Lin we can consistently have LF
but not AF C*-subalgebras of $\B(\ell_2)$.
\end{proof}

Note that a related question of Takesaki if it is consistent that   there are subalgebras of $\B(\ell_2)$ 
which are locally matricial (i.e. matroid, in Dixmier's terminology) but not approximately matricial (7.19 of \cite{farah-katsura}) remains
unresolved.

It turns out that the properties of being almost unital and locally unital are related 
to our investigation of $\ll$-unital algebras, namely we obtain:

\begin{theorem}\label{intro-aulu} Let $A$ be a C*-algebra.
\begin{enumerate}
\item $A$ is LU if and only if $A$ has an approximate unit consisting of projections.
\item $A$ is AU if and only if $A$ has a $\ll$-increasing  approximate unit consisting of projections.
\item $A$ is AU if and only if $A$ is LU and has a $\ll$-increasing approximate unit.
\end{enumerate}
\end{theorem}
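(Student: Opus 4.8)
The plan is to dispatch the three equivalences separately; the first two are routine and the third is where the work lies. For (1): if $(p_\lambda)$ is an approximate unit of projections then, given finite $F\subseteq A$ and $\varepsilon>0$, use that approximate units are automatically two-sided to pick $\lambda$ with $\|a-ap_\lambda\|<\varepsilon$ and $\|a-p_\lambda a\|<\varepsilon$ for all $a\in F$; then $\|a-p_\lambda ap_\lambda\|<2\varepsilon$, so $F\subseteq_{2\varepsilon}p_\lambda Ap_\lambda$, a unital C*-subalgebra with unit $p_\lambda$, and $A$ is LU. Conversely, for $A$ LU, index by the directed set of pairs $(F,\varepsilon)$ with $(F,\varepsilon)\preceq(F',\varepsilon')$ when $F\subseteq F'$ and $\varepsilon\ge\varepsilon'$, choosing a unital $B_{F,\varepsilon}$ with $F\subseteq_{\varepsilon/2}B_{F,\varepsilon}$; since $\|a-a1_{B_{F,\varepsilon}}\|\le\|a-b\|+\|(b-a)1_{B_{F,\varepsilon}}\|<\varepsilon$ for $a\in F$ and $b\in B_{F,\varepsilon}$ with $\|a-b\|<\varepsilon/2$, the projections $(1_{B_{F,\varepsilon}})$ form an approximate unit. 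For (2), note that for projections $p\ll q$ is equivalent to $p\le q$: if $(p_\lambda)$ is a $\ll$-increasing approximate unit of projections then $\lambda\prec\mu$ gives $p_\lambda\le p_\mu$, hence $p_\lambda Ap_\lambda=p_\lambda(p_\mu Ap_\mu)p_\lambda\subseteq p_\mu Ap_\mu$, so the corners $p_\lambda Ap_\lambda$ form a directed family of unital C*-subalgebras with dense union (exactly as in (1)), and $A$ is AU; conversely, if $(B_i)_{i\in I}$ is such a family, ordering $I$ by inclusion of the $B_i$ makes $(1_{B_i})$ a $\ll$-increasing approximate unit, since $B_i\subseteq B_j$ forces $1_{B_i}\le 1_{B_j}$.

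For (3), the forward implication is immediate from (2) together with the fact that AU implies LU (given finite $F$ and $\varepsilon>0$, denseness supplies $b_f\in B_{i_f}$ with $\|f-b_f\|<\varepsilon$ for $f\in F$, and directedness packs the finitely many $B_{i_f}$ into a single $B_k$ with $F\subseteq_\varepsilon B_k$). The reverse implication is the crux: assume $A$ is LU and fix a $\ll$-increasing approximate unit $(u_\lambda)_{\lambda\in\Lambda}\subseteq A^1_+$, so that by (2) it suffices to build a $\ll$-increasing approximate unit of $A$ consisting of projections. Since $u_\lambda\ll u_\mu$ gives $u_\lambda=u_\mu u_\lambda u_\mu\in u_\mu Au_\mu$, the hereditary C*-subalgebras $A_\lambda:=\overline{u_\lambda Au_\lambda}$ satisfy $A_\lambda\subseteq A_\mu$ for $\lambda\prec\mu$ and $\overline{\bigcup_\lambda A_\lambda}=A$ (as $u_\lambda au_\lambda\to a$), and each $A_\lambda$ is $\sigma$-unital with strictly positive element $u_\lambda$. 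I would then argue in three steps. First, each $A_\lambda$ is LU, because hereditary C*-subalgebras of LU C*-algebras are LU — one uses the truncation $a\mapsto u_\lambda^{1/n}au_\lambda^{1/n}$ to reduce to applying LU in the ambient $A$ and correcting the resulting projection back into $A_\lambda$, \autoref{<<d<<} being the natural tool for the correction. Second, a $\sigma$-unital LU C*-algebra $C$ has a countable $\ll$-increasing approximate unit of projections: one copies the classical proof that $\sigma$-unitality implies $\ll$-unitality (Proposition \ref{sigma-unital}) but now produces projections $q_1\le q_2\le\cdots$ — starting from a countable approximate unit $(e_n)$ of $C$, at stage $n$ LU yields a unital C*-subalgebra whose unit $\widetilde q$ has $\mathbf d(q_{n-1},\widetilde q)$ small and nearly absorbs $e_n$, and the classical unitary equivalence of close \emph{projections} \cite[II.3.3.5]{Blackadar2017} rotates $\widetilde q$ into a projection $q_n$ with $q_{n-1}\ll q_n$ still nearly absorbing $e_n$. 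Third, by the first two steps and (2) each $A_\lambda$ is AU, and one glues the corresponding directed families of unital C*-subalgebras coherently along the directed system $(A_\lambda)_{\lambda\in\Lambda}$ to conclude that $A$ is AU.

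The main obstacle is this last step — the coherent gluing — and, beneath it, precisely the pathology responsible for Lemma \ref{no-way-above}: closeness in the non-symmetric distance $\mathbf d$ does not upgrade to $\ll$, so two projections each $\mathbf d$-close to a projection $e$ need not be \emph{simultaneously} movable beneath $e$ by one small unitary, and ``sibling'' corners cannot be amalgamated by a single perturbation. This is exactly why both hypotheses must be used together — LU to manufacture projections, the $\ll$-increasing approximate unit to supply, through the $\sigma$-unital exhausting pieces $A_\lambda$, enough room to push the perturbations of the second step through coherently — and why the construction has to be organised along the directed system rather than by one transfinite recursion: $\Lambda$ need not contain a cofinal chain (e.g.\ $K(\ell_2(\omega_2))$ is $\ll$-unital but has none), so at ``limit'' stages there need be no current projection to perturb.
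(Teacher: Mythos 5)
Parts (1) and (2) of your proposal are correct and essentially identical to the paper's Proposition \ref{LUequivs} ((1)$\Leftrightarrow$(2)) and Proposition \ref{au=ll-unit-of-projections}: the corner trick $\|f-p_\lambda fp_\lambda\|<\varepsilon$ in one direction, the units $1_{B_i}$ of a directed family in the other, together with the observation that $B_i\subseteq B_j$ forces $1_{B_i}\ll 1_{B_j}$. The problem is part (3), reverse direction, where your argument does not close: you reduce to showing that the exhaustion $A_\lambda=\overline{u_\lambda Au_\lambda}$ by $\sigma$-unital LU (hence, granting your intermediate steps, AU) hereditary subalgebras forces $A$ to be AU, and you then concede that the ``coherent gluing'' of the directed families of unital subalgebras of the $A_\lambda$ is an unresolved obstacle. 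That concession is accurate: a union of $\ll$-units of projections of the $A_\lambda$ is $\ll$-approximately cofinal but there is no reason for it to be $\ll$-directed, a projection $p\in\mathcal{P}(A_\lambda)$ need not satisfy $p\ll u_\mu$ for any $\mu$, and, as you note, $\Lambda$ need not contain a cofinal chain along which to recurse. So the reduction lands on a statement that is essentially as hard as the one you started with, and the proof is incomplete. (Your first auxiliary step, that hereditary subalgebras of LU algebras are LU, is also asserted rather than proved, and is not needed in the paper's argument.)

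The paper's proof of Theorem \ref{au=lu+waybelow} avoids perturbation and gluing entirely, and you may find it instructive to compare. The key input from LU is the \emph{exact} statement that $\mathcal{P}(A)$ is $\ll$-cofinal in $A^1_\mathrm{c}$ (Proposition \ref{LUequivs}(5), proved by a support-projection argument, not by rotating almost-projections). Given a $\ll$-unit $U$ and $u\in U$, one first picks $p\in\mathcal{P}(A)$ with $u\ll p$; the danger is that $p$ sits nowhere below $U$. One then chooses $x\in U$ with $u\ll x$ and $\|px^\perp\|<1$, sets $y=\sqrt{1-x^{\perp2}}\in C^*(x)$, and observes that $ypy$ is well-supported; its support projection $q$ satisfies $u\ll q$ (since $u\ll x,p$) and $q\ll z$ for any $z\in U$ with $x\ll z$. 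Thus every member of the resulting family $P=\{q\in\mathcal{P}(A):q\ll\text{some }u\in U\}$ is sandwiched between elements of $U$ on both sides, so $P$ inherits $\ll$-directedness from $U$ for free and is a $\ll$-unit of projections, whence $A$ is AU by (2). This single local construction is exactly what replaces your global coherence problem; the ``sibling corners cannot be amalgamated'' pathology you identify (the one behind Lemma \ref{no-way-above}) is sidestepped because each new projection is manufactured already lying $\ll$-below a member of the given $\ll$-increasing approximate unit, rather than being produced first and reconciled afterwards.
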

\begin{proof} Apply Propositions \ref{LUequivs} and \ref{au=ll-unit-of-projections} and Theorem \ref{au=lu+waybelow}.
\end{proof}

Our results also relate to scattered C*-algebras, which generalize scattered 
locally compact spaces (where every non-empty subset contains an isolated point).
There are many equivalent conditions for being scattered, which are surveyed in \cite{cantor-bendixson}.  
For example, $A$ is scattered if and only if every nonzero quotient of $A$ contains a nonzero minimal projection, 
i.e. a projection $p$ such that $pAp=\C p$.
Kusuda proved that a C*-algebra $A$ is scattered if and only if every C*-subalgebra
of $A$ is LF (which Kusuda calls AF).  A natural question
is whether AF and LF can be distinguished in such a strong finiteness environment.
Since some of our ZFC examples are scattered, namely Examples \ref{crosses-algebra} and \ref{ex-tree-zfc},  we obtain the following positive answer.

\begin{theorem}\label{scattered-notAU} There are scattered C*-algebras which are not AU, in particular
they are not AF.  
\end{theorem}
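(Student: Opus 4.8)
The plan is to construct, in ZFC, a concrete scattered C*-algebra that fails to be AU, and then invoke the already-recorded implications "AF $\Rightarrow$ AU $\Rightarrow$ $\ll$-unital" to get the "in particular" clause for free. Concretely, I would take one of the ZFC examples promised in the introduction — either the crosses algebra of Example \ref{crosses-algebra} or the tree algebra of Example \ref{ex-tree-zfc} — and verify two things about it: first, that it is scattered, and second, that it is not $\ll$-unital (hence not AU, since AU $\Rightarrow$ $\ll$-unital). Scatteredness should be checked via one of the standard equivalent conditions surveyed in \cite{cantor-bendixson}; for these $2$-subhomogeneous examples built from families of subsets, the cleanest route is to exhibit a composition series with elementary (i.e. dual, type I with a dense socle) subquotients, or equivalently to show every nonzero quotient has a nonzero minimal projection. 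Since the examples are C*-subalgebras of $\ell_\infty^X(D)$ with $D\subseteq \ell_\infty(M_2)$, the minimal projections come from rank-one projections supported on a single coordinate, and one checks these persist in every quotient.

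The non-$\ll$-unitality is where the real work lies, and it is exactly the mechanism sketched in the paragraph beginning "The main idea of our constructions": exploit the dimension-drop phenomenon recorded in Lemma \ref{no-way-above}, which produces two projections $p,q$ in $D$ with no $r\in D^1_+$ satisfying $p,q\ll r$. The argument by contradiction runs as follows. Suppose $(u_\lambda)$ were a $\ll$-increasing approximate unit for the algebra $A_\F$ (or for the tree variant). Using the combinatorial structure of the indexing family $\F$ (an almost disjoint or tree-like family of subsets of $X$), one locates a single coordinate $x\in X$ at which two of the relevant generators of the algebra behave like the $p$ and $q$ of Lemma \ref{no-way-above}. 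Because the $u_\lambda$ are eventually $\ll$-above any fixed element and the net is $\ll$-increasing, some $u_\mu$ would have to satisfy $p, q \ll (u_\mu)_x$ at that coordinate, with $(u_\mu)_x \in D^1_+$ — contradicting Lemma \ref{no-way-above}. The delicate part is arranging the family $\F$ so that "eventually above everything" genuinely forces the two incompatible projections to live over a common coordinate; this is the role of the tree structure in Example \ref{ex-tree-zfc}, where an uncountable antichain or branch through the tree is what defeats every candidate approximate unit.

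I would expect the main obstacle to be precisely this coordinatization step: showing that no matter how cleverly a $\ll$-increasing approximate unit is chosen, the combinatorics of $\F$ trap it at a single bad coordinate. This requires a careful analysis of what membership in the algebra imposes on the "tails" of a positive element (how $(u_\lambda)_x$ varies as $x$ ranges over a set in $\F$ versus outside it), together with the observation that $\ll$ is a very rigid relation — $a\ll b$ forces $b$ to act as a unit on the support of $a$, which at a type-$M_2$ coordinate leaves essentially no room once $a$ has rank two locally. Once that contradiction is in hand, the theorem is immediate: a non-$\ll$-unital algebra cannot be AU (AU $\Rightarrow$ $\ll$-unital, noted above), hence a fortiori cannot be AF, while scatteredness is established independently; combining these for the ZFC examples \ref{crosses-algebra} and \ref{ex-tree-zfc} yields scattered C*-algebras that are not AU and in particular not AF.
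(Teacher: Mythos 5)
Your overall strategy coincides with the paper's: take one of the ZFC examples (the crosses algebra of Example \ref{crosses-algebra} or the tree algebra of Example \ref{ex-tree-zfc}), rule out a $\ll$-unit via the dimension-drop Lemma \ref{no-way-above} combined with the combinatorial condition $(*)$ of Lemma \ref{family-algebra}, and conclude non-AU (hence non-AF) from the chain AF $\Rightarrow$ AU $\Rightarrow$ $\ll$-unital. Your sketch of the non-$\ll$-unitality argument is essentially Lemma \ref{family-algebra}: each $u_i$ approximately dominating $\PP_i$ lies in a subalgebra generated by countably many $\PP_j$, condition $(*)$ produces a coordinate $x$ where $u_i(x)\in\C p$ and $u_{i'}(x)\in\C q$ with nonzero coefficients, and $\ll$-directedness forces $p,q\ll u(x)$ for some $u(x)\in D^1_+$, contradicting Lemma \ref{no-way-above}. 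One caveat: an approximate unit is only \emph{approximately} cofinal, so "the $u_\lambda$ are eventually $\ll$-above any fixed element" is not available; the argument runs on $\|\PP_i-\PP_iu_i\|<1$ (which forces the scalar coefficients to be nonzero) plus $\ll$-directedness of the unit.

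The genuine gap is in the scatteredness verification. The proposed mechanism --- minimal projections "come from rank-one projections supported on a single coordinate, and one checks these persist in every quotient" --- does not work as stated. Being a C*-subalgebra of $\ell_\infty^X(D)$ gives no scatteredness for free: the subalgebra $\{a:\ a(x)\in\C p\text{ for all }x\}$ is isomorphic to $\ell_\infty^X\cong C(\beta X)$, which is not scattered for infinite $X$; and for the actual examples, the commutative quotient $A_\F/I$ of Proposition \ref{extension} is a $C_0(K)$ whose minimal projections correspond to isolated points of $K$, not to single coordinates of $X$, so "persistence in every quotient" is exactly the non-trivial point. Scatteredness really depends on the combinatorics of $\F$. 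The paper's route is: embed the algebra in $C(K,D)$ where $K$ is the Stone space of the Boolean algebra generated by $\F$; for the crosses and tree families the generators form a well-founded $\wedge$-semilattice (pairwise intersections are singletons, respectively initial segments of branches), so the Boolean algebra is superatomic by Lemma \ref{well-generated} and $K$ is scattered; then Lemma \ref{subscattered}, via the countable-spectrum characterization of scattered C*-algebras and Lemma \ref{scattered} for $D$, gives that $C(K,D)$ and hence the subalgebra is scattered. Your proposal needs an argument of this kind in place of the persistence claim.
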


Restrictions on the densities of such algebras and the densities of the Hilbert spaces where these
algebras can be represented are detailed in the following theorem. Recall that a Canadian tree 
(also known as a weak Kurepa tree) is a tree of height $\omega_1$, with each level
of cardinality at most $\omega_1$ and with more than $\omega_1$
uncountable branches (see Section 2.3 for more details). 
It is known that the existence of Kurepa or Canadian trees is independent 
of the continuum hypothesis (CH) and Martin's axiom (MA) 
(see \cite{devlin}, \cite{stevo}).  
Uncountable $Q$-sets are also
known to be consistent with Kurepa and hence Canadian trees
(see the discussion after Definition \ref{separated}). 

\begin{theorem}\
\begin{enumerate}
\item There are scattered C*-subalgebras of $\B(\ell_2(\omega_2))$ of density $\omega_2$ which are not AF,
\item There are scattered C*-subalgebras of $\B(\ell_2(2^\omega))$ of density bigger than $2^\omega$ which are not AF,
\item There are scattered C*-subalgebras of $\B(\ell_2(\omega_1))$ of density bigger than $\omega_1$ which are not AF,
if there exists a Canadian tree,
\item There are scattered C*-subalgebras of $\B(\ell_2)$ of density bigger than $\omega_1$ which are not AF,
if there exists a Canadian tree and a Boolean embedding $E: \wp(\omega_1)\rightarrow\wp(\N)/Fin$,
\end{enumerate}
\end{theorem}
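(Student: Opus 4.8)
The plan is to obtain all four algebras as instances of the algebra $A_\F$ of Theorem~\ref{main-negative}\eqref{omega_1}, built from the dimension-drop block $D$ of Lemma~\ref{no-way-above} over a ground set $X$ and a suitable family $\F\subseteq\wp(X)$ (and, for~(4), also applying the coding of Theorem~\ref{main-negative}\eqref{omega}), with the combinatorial input chosen so that $A_\F$ is in addition \emph{scattered}. The first step is the reduction of ``not AF'' to ``not $\ll$-unital'': since finite-dimensional C*-algebras are unital, every AF algebra is AU, and every AU algebra is $\ll$-unital, so a non-$\ll$-unital C*-algebra is not AF. Hence it suffices, in each case, to build a scattered, non-$\ll$-unital algebra of the prescribed density acting on the prescribed Hilbert space. (By Lin's theorem such an algebra is automatically LF, and by \cite[Theorem 1.5]{farah-katsura} an LF algebra of density $\leq\omega_1$ is AF; this is why every density occurring in the statement exceeds $\omega_1$.)

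Next I would recall the mechanism. With $D\subseteq\ell_\infty(M_2)$ as in Lemma~\ref{no-way-above} --- separable, non-unital, $2$-subhomogeneous, scattered, with projections $p,q$ admitting no $r\in D^1_+$ satisfying $p,q\ll r$ --- and with $\F$ a ``separated'' family of subsets of $X$, one forms $A_\F\subseteq\ell_\infty^X(D)$. Then $A_\F$ is $2$-subhomogeneous, and (once $|X|\geq\omega$) $A_\F\subseteq\ell_\infty^X(D)\subseteq\B\bigl(\bigoplus_{x\in X}\ell_2\bigr)=\B(\ell_2(X))$, while the density of $A_\F$ is $\max(|X|,|\F|,\omega)$ unless the freedom in the $M_2$-fibres is used to inflate it further. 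By the argument of Theorem~\ref{main-negative}\eqref{omega_1}, $A_\F$ is not $\ll$-unital: the separation property of $\F$ forces any hypothetical $\ll$-increasing approximate unit to dominate, on some single coordinate $x\in X$, both $p$ and $q$ up to arbitrarily small error, contradicting Lemma~\ref{no-way-above}. The genuinely new step is to arrange that $A_\F$ is scattered. Since $A_\F$ is $2$-subhomogeneous, it is scattered as soon as its spectrum is scattered; and the spectrum of $A_\F$ is assembled from the countable, scattered spectrum of $D$ over a copy of $X$, with the local topology governed by the membership pattern of $\F$. I would therefore choose $\F$ \emph{tree-like} --- its members pairwise almost disjoint, with well-founded comparability, as happens for the set of branches of a tree or for a ``crosses'' family over a product --- which makes this spectrum scattered; this is exactly what underlies the crosses algebra of Example~\ref{crosses-algebra} and the tree algebra of Example~\ref{ex-tree-zfc}. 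One must check that tree-likeness is compatible with separation: building a tree of height $\omega_1$ level by level, one arranges the splitting at each level to defeat every countable partial approximate unit --- the diagonalization behind Theorem~\ref{main-omega1} --- while distinct branches of a tree still meet only in a proper initial segment, so the branches form a separated, tree-like family.

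With this in place the four cases amount to choosing $X$ and $\F$ and reading off $\max(|X|,|\F|,\dots)$ and $|X|$:
\begin{enumerate}
\item In ZFC, take a tree of height $\omega_1$ with $\omega_2$ nodes carrying $\omega_2$ separating branches --- equivalently the crosses algebra over a ground set of size $\omega_2$ --- so that $|X|=|\F|=\omega_2$: a scattered, non-$\ll$-unital algebra of density $\omega_2$ inside $\B(\ell_2(\omega_2))$. This is Example~\ref{crosses-algebra} (or~\ref{ex-tree-zfc}).
\item Again in ZFC, take the ground set of size $2^\omega$ together with a separated tree-like family over it of size exceeding $2^\omega$ (available in ZFC over a set of that size), obtaining density $>2^\omega$ inside $\B(\ell_2(2^\omega))$.
\item If a Canadian tree $T$ exists --- height $\omega_1$, levels of size $\leq\omega_1$, more than $\omega_1$ uncountable branches --- let $X$ be its set of nodes, so $|X|=\omega_1$; arrange its branching to be separated and let $\F$ consist of more than $\omega_1$ uncountable branches of $T$. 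Then $A_\F$ is scattered, not $\ll$-unital, of density $>\omega_1$, and, since $|X|=\omega_1$, sits inside $\B(\ell_2(\omega_1))$.
\item Starting from the algebra of~(3) --- scattered, non-$\ll$-unital, inside $\ell_\infty^X(D)$ with $|X|=\omega_1$ and density $>\omega_1$ --- apply the coding of Theorem~\ref{main-negative}\eqref{omega}: a Boolean embedding $E\colon\wp(\omega_1)\to\wp(\N)/Fin$ lets one realise this algebra as a C*-subalgebra of $\ell_\infty(D)$ (ground set $\N$), hence inside $\B(\ell_2)$, still of density $>\omega_1$; one checks the coded algebra remains scattered because $D$ is scattered and the coding is block-diagonal over $\N$.
\end{enumerate}

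The main obstacle is the tension between the two requirements on $\F$: it must be \emph{separated} (a height-$\omega_1$ diagonalization, needed to destroy $\ll$-increasing approximate units via Lemma~\ref{no-way-above}) and \emph{tree-like} (needed for the spectrum of $A_\F$, and hence $A_\F$ itself, to be scattered), while carrying the prescribed cardinality over a ground set of the prescribed size. In~(1)--(2) these cohere readily once one observes that the branches of a tall tree are automatically tree-like. The delicate part is~(3)--(4): forcing density $>\omega_1$ while keeping $|X|=\omega_1$ (resp.\ $|X|=\omega$) is precisely where a Canadian tree (resp.\ a Canadian tree together with an embedding $\wp(\omega_1)\hookrightarrow\wp(\N)/Fin$) becomes indispensable, and I expect the fiddliest points to be verifying that the more-than-$\omega_1$ branches of a Canadian tree can be taken to form a separated family, and --- the most technical step --- that the $\wp(\omega_1)\hookrightarrow\wp(\N)/Fin$ coding in~(4) preserves scatteredness.
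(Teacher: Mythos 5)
Your overall strategy coincides with the paper's: reduce ``not AF'' to ``not $\ll$-unital'' via AF $\Rightarrow$ AU $\Rightarrow$ $\ll$-unital, realise each algebra as $A_\F\subseteq\ell_\infty^X(D)$ for a combinatorial family over the dimension-drop algebra $D$, and derive scatteredness from well-foundedness of the generating family (Lemmas \ref{well-generated} and \ref{subscattered}); the four cases are exactly Examples \ref{crosses-algebra}, \ref{ex-tree-zfc}, \ref{ex-tree-can} and \ref{Bell2scattered}. However, two points of your sketch are not mere imprecision; as written the argument would fail there.

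First, the family $\F$ of Definition \ref{def-family-algebra} is a family of \emph{pairs} of disjoint sets $(A_i,B_i)$, the generator $\PP_i$ being $p$ on $A_i$ and $q$ on $B_i$, and condition \eqref{star} of Lemma \ref{family-algebra} asks for a point of $A_i\cap B_{i'}$ avoiding the countably many other generators involved. You treat $\F$ as a family of single subsets of $X$ (for trees, the branches), and this cannot work: if each generator takes only the values $p$ and $0$ fibrewise, no coordinate ever witnesses both $p$ and $q$ and Lemma \ref{no-way-above} is never invoked. In the tree examples the indispensable second half of each pair is the \emph{exit set} $b^\#$, and the witness for \eqref{star} is the first node at which two branches agreeing up to a high level split apart. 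Relatedly, nothing needs to be ``arranged by diagonalization'' while building the tree: for an arbitrary tree with the stated cardinal parameters, \eqref{star} holds automatically for $\{(b,b^\#)\}$ by a pigeonhole on the splitting levels $\alpha_b$ --- this is where $cf(\kappa)>\omega$ enters, and why in case (2) one must take $\kappa$ minimal with $2^\kappa>2^\omega$ rather than an unspecified family of size $>2^\omega$ ``available in ZFC''. The crosses family of case (1) is not a tree and is verified instead by the free set lemma.

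Second, in case (4) the scatteredness of the coded algebra $B\subseteq\ell_\infty(D)$ does not follow from ``$D$ is scattered and the coding is block-diagonal over $\N$'': $\ell_\infty(D)$ itself is not scattered, since it contains a copy of $\ell_\infty\cong C(\beta\N)$. The paper's argument is an extension argument: $Ker(\pi)$ is shown to be isomorphic to a C*-subalgebra of $c_0(D)$, hence scattered, while $B/Ker(\pi)\cong A_{\F(T)}$ is scattered, and scatteredness is preserved by extensions. The technical heart, absent from your sketch, is that $\pi$ is isometric modulo $c_0(D)$, and this is precisely where the Boolean embedding $E$ is used: every nonzero element of the subalgebra of $\wp(\N)/Fin$ generated by the coding sets lies above some atom $E(\{t\})=[S_t]_{Fin}$, so the norm of $a$ away from $c_0(D)$ is attained along some $S_t$ and therefore recorded by $\pi(a)(t)$.
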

\begin{proof} See Examples \ref{crosses-algebra}, \ref{ex-tree-zfc}, \ref{ex-tree-can} and \ref{Bell2scattered}.
\end{proof}

Taking into account the result of \cite{farah-katsura} we obtain the following:

\begin{theorem}
Whether there are C*-subalgebras of $\B(\ell_2)$ that are scattered but not AF is independent of ZFC.
\end{theorem}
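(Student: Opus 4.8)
The plan is to prove independence by exhibiting, relative to the consistency of ZFC, both a model containing a scattered non-AF C*-subalgebra of $\B(\ell_2)$ and a model containing none. Since $\ell_2$ is separable, $|\B(\ell_2)|=2^\omega$, so every C*-subalgebra of $\B(\ell_2)$ has density at most $2^\omega$; this crude bound is what makes the negation consistent. Indeed, under CH any scattered C*-subalgebra $A\subseteq\B(\ell_2)$ has density at most $2^\omega=\omega_1$, and since scattered C*-algebras are LF (Lin, Kusuda; see the introduction) $A$ is LF, hence AF by \cite[Theorem 1.5]{farah-katsura}. So CH implies there is no scattered-but-not-AF C*-subalgebra of $\B(\ell_2)$ \textemdash\ this is precisely the reasoning behind Corollary \ref{Bell2notLF}.

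For the other direction I would invoke Example \ref{Bell2scattered}, which constructs a scattered, non-AF C*-subalgebra of $\B(\ell_2)$ from the hypothesis that there exist a Canadian tree and a Boolean embedding $E\colon\wp(\omega_1)\to\wp(\N)/\mathrm{Fin}$; it then suffices to realize this hypothesis in a model of ZFC. As noted in the discussion after Definition \ref{separated}, an uncountable $Q$-set (which yields the embedding $E$) is consistent with the existence of a Kurepa, hence Canadian, tree: for instance, force $\mathrm{MA}_{\omega_1}$ by a ccc iteration over a model of GCH containing a Kurepa tree. Since a ccc forcing preserves $\omega_1$ and adds no new nodes to the ground-model tree, all its levels stay countable, its height stays $\omega_1$, and its uncountably many branches persist, so it remains a Canadian tree, while $\mathrm{MA}_{\omega_1}$ turns every set of reals of size $\omega_1$ into a $Q$-set.

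The only genuinely delicate points are set-theoretic rather than operator-algebraic \textemdash\ all of the latter is packaged in Example \ref{Bell2scattered}: one must verify that a ground-model Kurepa tree really does remain a weak Kurepa (Canadian) tree after the ccc iteration forcing $\mathrm{MA}_{\omega_1}$, and that an uncountable $Q$-set really does produce a Boolean embedding $\wp(\omega_1)\to\wp(\N)/\mathrm{Fin}$ (the separating sets for the almost disjoint family coding the $Q$-set being exactly the values of $E$). Both are addressed in the discussion following Definition \ref{separated}, so the proof reduces to assembling four ingredients: Example \ref{Bell2scattered}; the density-$\omega_1$ equivalence of LF and AF from \cite{farah-katsura}; Lin's theorem that scattered C*-algebras are LF (\cite[Lemma 5.1]{lin}); and the consistency with ZFC of ``there is a Canadian tree and an uncountable $Q$-set''.
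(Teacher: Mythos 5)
The CH direction of your argument is correct and coincides with the paper's: under CH every C*-subalgebra of $\B(\ell_2)$ has density at most $2^\omega=\omega_1$, scattered implies LF by Lin/Kusuda, and LF algebras of density $\leq\omega_1$ are AF by Farah--Katsura. The gap is in the consistency direction, specifically in your claim that an uncountable $Q$-set ``yields the embedding $E$'' and that ``the separating sets for the almost disjoint family coding the $Q$-set [are] exactly the values of $E$.'' This is false, and the paper says so explicitly just before Example \ref{Bell2scattered}: the hypothesis needed there is a separated family ``with additional properties which do not follow from the existence of a $Q$-set.'' A separated family only provides, for each $\mathcal X\subseteq\mathcal S$, \emph{some} set $Z_{\mathcal X}$ separating $\mathcal X$ from $\mathcal S\setminus\mathcal X$ modulo finite; these choices need not cohere into a Boolean homomorphism (e.g.\ $Z_{\mathcal X}\cup Z_{\mathcal Y}$ and $Z_{\mathcal X\cup\mathcal Y}$ can differ by an infinite set meeting every $S_\alpha$ finitely), and $\wp(\omega_1)$ has cardinality $2^{\omega_1}=2^\omega$, so embedding it into $\wp(\N)/Fin$ is exactly the nontrivial ``algebras of size continuum'' problem of Baumgartner--Frankiewicz--Zbierski, not a consequence of $\mathrm{MA}_{\omega_1}$ or of a $Q$-set. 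Consequently your proposed model (MA forced over GCH plus a Kurepa tree) is only shown to satisfy the hypothesis of Example \ref{Bell2}, which produces an LF non-AF subalgebra of $\B(\ell_2)$ that is \emph{not} claimed to be scattered; it does not verify the hypothesis of Example \ref{Bell2scattered}.

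The fix is the one the paper carries out in the paragraph preceding Example \ref{Bell2scattered}: start from a model with a Canadian tree $T$ and $2^{\omega_1}=\omega_2$ (e.g.\ $L$) and apply the c.c.c.\ forcing of \cite{bfz}, which yields Martin's Axiom for $\sigma$-linked forcings, $2^\omega=\omega_2$, and the statement that \emph{every} Boolean algebra of cardinality $2^\omega$ embeds into $\wp(\N)/Fin$; since the forcing is c.c.c., $T$ remains Canadian, and since $2^{\omega_1}=2^\omega$ there, one gets the required embedding $E:\wp(T)\rightarrow\wp(\N)/Fin$. Your observation that c.c.c.\ forcing preserves Canadian trees is correct and is the part of your argument that survives; the Boolean embedding must come from the \cite{bfz} forcing, not from the $Q$-set.
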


However, we can not say the same for subalgebras of $\B(\ell_2(\omega_1))$, which raises the following question.

\begin{question}
Do there exist in ZFC   scattered C*-subalgebras of $\B(\ell_2(\omega_1))$ which are not AF?
\end{question}

A negative answer might be obtained from additional assumptions inconsistent with Canadian trees, 
e.g. the Proper Forcing Axiom (PFA) \textendash\, see \cite[7.10]{baumgartner}. Also
the Mitchell model seems a natural place where there may exist 
scattered C*-subalgebras of $\B(\ell_2(\omega_1))$ which are not AF, because 
it is already known that some consequences of Canadian trees
concerning commutative scattered C*-algebras are false there (see \cite{bs}, \cite{weese}).

Using the same arguments as for separable C*-algebras one can show that quotients and ideals
of AF algebras are again AF in the nonseparable context (cf. \cite[III.4.1, 4.4]{davidson}).
As our C*-algebras are extensions of AF algebras by AF algebras we obtain the following:

\begin{theorem} There are extensions of AF algebras by AF algebras which are not AF,
while extensions of LF algebras by LF algebras are always LF.
\end{theorem}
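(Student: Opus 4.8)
The statement has two halves, and they are of quite different character. The easy half is that an extension of an LF algebra by an LF algebra is LF: this should be a soft, purely local argument. Suppose $0 \to I \to A \to A/I \to 0$ with $I$ and $A/I$ both LF, fix a finite $F \subseteq A$ and $\varepsilon > 0$. First I would approximate the images of $F$ in $A/I$ by a finite-dimensional subalgebra, lift a finite system of matrix units (or, more carefully, use a quasicentral approximate unit of $I$ together with the lifting/perturbation of finite-dimensional subalgebras through quotients by $\sigma$-unital — here, by arbitrary — ideals) to get elements of $A$ whose images are close to those of $F$, so that $F$ is within $\varepsilon/2$ of $G + (\text{something in } I)$ where $G$ generates a finite-dimensional subalgebra; then apply LF of $I$ to that residual finite subset of $I$. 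The only subtlety is the standard one of lifting a finite-dimensional subalgebra along a quotient map up to small perturbation, which is routine C*-algebra semiprojectivity of finite-dimensional algebras (this works for arbitrary ideals, not just $\sigma$-unital ones, because the relevant relations are stable). I would cite \cite[III.4.1, 4.4]{davidson}-style arguments, noting they go through verbatim in the nonseparable setting since they are local and finite in nature.

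The harder half — that extensions of AF by AF need not be AF — is where the constructions of this paper do the work, and I would simply invoke them rather than reprove anything. The plan is: take one of the non-$\ll$-unital examples produced earlier, for concreteness the algebra $A_\F \subseteq \ell_\infty^X(D)$ of Theorem \ref{main-negative}\eqref{omega_1} (Example \ref{ad-algebra}). As noted in the introduction, $A_\F$ is $2$-subhomogeneous and sits in an exact sequence $0 \to I \to A_\F \to A_\F/I \to 0$ in which both $I$ and $A_\F/I$ are AF; I would point to the place in the body of the paper where this exact sequence is exhibited and where $I$ and the quotient are shown to be AF (the ideal being something like a sum of matrix-algebra pieces indexed by $X$, the quotient being a commutative-plus-finite-dimensional scattered AF algebra coming from the combinatorics of $\F$). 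Since every AF algebra is AU and every AU algebra is $\ll$-unital, while $A_\F$ is not $\ll$-unital by construction, $A_\F$ cannot be AF. Hence $A_\F$ is the desired extension.

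The one genuine check — and the step I expect to be the real (if mild) obstacle — is verifying that both $I$ and $A_\F/I$ really are AF, not merely LF, in the nonseparable regime. For the ideal this is typically immediate because it is a $c_0$- or $\ell_\infty$-type direct sum of finite-dimensional algebras, which is manifestly AF via the directed family of finite partial sums. For the quotient one must identify it explicitly from the family $\F$ and recognize it as a known AF algebra (e.g. an inductive-limit-type commutative scattered algebra, or a subalgebra of $\ell_\infty$ of a suitable scattered space); the directed family of finite-dimensional subalgebras has to be produced honestly, using that the relevant combinatorial family $\F$ is, say, an almost disjoint or tree-like family whose induced Boolean structure is locally finite. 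I would organize the proof to defer all of this to the explicit description already given for Example \ref{ad-algebra}, so that here I need only assemble: (i) $I$ AF, (ii) $A_\F/I$ AF, (iii) $A_\F$ not $\ll$-unital hence not AF, (iv) AF $\Rightarrow$ AU $\Rightarrow$ $\ll$-unital (from Theorem \ref{intro-aulu} and the remarks preceding it). Combined with the LF-by-LF half above, this yields the theorem.
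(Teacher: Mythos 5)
Your overall architecture (a local argument for the LF half, plus citing one of the non-$\ll$-unital examples and the implication AF $\Rightarrow$ AU $\Rightarrow$ $\ll$-unital for the AF half) matches the paper's, and step (iii)--(iv) of your assembly is exactly right. But both halves contain genuine gaps as written.

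For the LF half, the claim that a finite-dimensional subalgebra of $A/I$ lifts, up to small perturbation, through the quotient map for an \emph{arbitrary} ideal ``because the relevant relations are stable'' is false: finite-dimensional C*-algebras are semiprojective but not projective, and already a single projection need not lift through a quotient map (e.g. $C[0,1]\rightarrow\C\oplus\C$ by evaluation at the endpoints has no nontrivial projections upstairs). The lifting/correction step is precisely where the separable AF extension theorem uses the structure of the ideal, and it is the delicate heart of that proof, not a routine preliminary. The paper avoids this entirely by a different route: given finite $F\subseteq A$ and $\varepsilon>0$, it closes off to a \emph{separable} C*-subalgebra $B\ni F$ such that $B\cap I$ and $B/(B\cap I)$ are LF, then invokes Bratteli (LF $=$ AF for separable algebras) and the separable extension theorem \cite[II.6.3]{davidson} to conclude $B$ is AF. If you want to keep a direct local argument you must actually use the LF-ness of $I$ to make the lifting work; otherwise the L\"owenheim--Skolem reduction is the clean fix.

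For the AF half, you invoke Example \ref{ad-algebra}, but the paper's Proposition \ref{extension} --- which is what exhibits $A_\F$ as an extension of an AF algebra by an AF algebra --- has \emph{scatteredness} of $A_\F$ as a hypothesis, and Example \ref{ad-algebra} is not shown to be scattered. Scatteredness is used essentially: the quotient $A_\F/I$ is shown to be $C_0(K)$ for $K$ locally compact \emph{scattered}, hence totally disconnected, hence AF; and the proof that the ideal $I=A_\F\cap J$ (where $J$ consists of elements with $\lim_n a(x)(n)=0$ for every $x$) is AF uses that self-adjoint elements of scattered algebras have countable spectrum to choose a cutoff value outside the spectrum. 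Your description of $I$ as ``a $c_0$- or $\ell_\infty$-type direct sum of finite-dimensional algebras, manifestly AF via finite partial sums'' is not accurate --- it is an uncountable product-type algebra intersected with $A_\F$, and its AF-ness requires real work. The examples you should cite are the scattered ones, namely Examples \ref{crosses-algebra} or \ref{ex-tree-zfc}, to which Proposition \ref{extension} applies.
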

\begin{proof} 
Apply Proposition \ref{extension} to either of our scattered ZFC Examples \ref{crosses-algebra} or 
\ref{ex-tree-zfc}.

To prove the second part of the theorem suppose that $A$ is a C*-algebra, $I\subseteq A$ is its ideal and both $I$ and $A/I$
are LF.  Let $F$ be a finite subset of $A$ and $\varepsilon>0$. It is enough to find a
separable algebra $B\subseteq A$ containing $a$ such that $B\cap I$ and $B/B\cap I$
are LF.  Then by Bratteli's result $B\cap I$ and $B/B\cap I$ 
are AF and so $B$ is AF since being AF  is preserved by extensions of
separable C*-algebras (\cite[II.6.3]{davidson}), hence there is a finite-dimensional $C\subseteq B$
such that $F\subseteq_\varepsilon C$ which provides the required approximation
of $F$ in $A$.

To construct $B$, for each $n\in\N$, recursively build separable C*-subalgebras $B_n\subseteq B_{n+1}$ 
of $A$ with countable  sets $D_n$ and $E_n$ dense in $B_n\cap I$
and $ B_n/B_n\cap I$ respectively with
$F\subseteq B_0$ such that, for each finite $G\subseteq D_n$ and $k\in \N$, there is
a finite dimensional C*-subalgebra $B_{G, k}\subseteq B_{n+1}\cap I$ with $G\subseteq_{1/k} B_{G, k}$
and, for each finite $H\subseteq E_n$ and $k\in \N$, there is
a finite dimensional C*-subalgebra $C_{H, k}\subseteq B_{n+1}/B_{n+1}\cap I$ with $H\subseteq_{1/k} C_{H, k}$.
Then put $B=\overline{\bigcup_{n\in \N}B_n}$.
\end{proof}

The above theorem contrasts with the separable case where AF is preserved by extensions (\cite[II.6.3]{davidson}).
The nonseparable case seems quite different in this context, for example it was proved
in \cite{mrowka} and \cite{stable} that stability of AF algebras is not
preserved by extensions nor by simple uncountable inductive limits.  However, we do not know
the answer to the following:

\begin{question} Is being AF preserved by
(linear) inductive limits of length $\omega_1$ (or even $\omega$)?
\end{question}

The structure of the paper is as follows. The next section consists of preliminaries.
The third section aims at proving
results which yield Theorem \ref{intro-aulu}. The fourth
section concerns $\omega_1$-unital algebras and
 the proof of  Theorem \ref{main-omega1}.
The last section contains the counterexamples required for Theorem \ref{main-negative}, Corollary \ref{independent-l2}
and Theorem \ref{scattered-notAU}.

For general background, see \cite{Blackadar2017} or \cite{pedersen} for C*-algebras and \cite{jech} or \cite{kunen} for set theory.  Throughout, we consider a C*-algebra $A$ and use the following notation:
$A^1$ for the unit ball of $A$, $A_+$ for the set of positive elements of $A$, $A^1_+$ for
the positive part of the unit ball, $A^{=1}$ for the unit sphere of $A$, $\mathcal P(A)$ 
for the set of projections in $A$ and $\widetilde A$ for the unitization of $A$ (if $A$ is not already unital, otherwise we take $\widetilde A=A$).
For any $a\in A$, we let $a^\perp=1-a\in\widetilde A$ and we denote the spectrum of $a$ by $\sigma(a)$.  We say that
a Banach space $B$ has density $\kappa$ if $\kappa$ is the minimum cardinality of a norm dense subset of $B$.
We let $\ell_2(\kappa)$ denote the Hilbert space of square summable functions on $\kappa$, which is the unique Hilbert space of density $\kappa$.  We also let $\B(H)$
denote the C*-algebra of all bounded linear operators on a Hilbert space $H$.
If $A$ is a C*-algebra and $X$ is a set, by $\ell_\infty^X(A)$ we mean the
C*-algebra of all norm bounded functions from $X$ into $A$ with pointwise operations.
If $X=\N$, we write $\ell_\infty(A)$ for $\ell_\infty^\N(A)$.

We let $\omega_1$ denote the first uncountable cardinal, and
$\omega_2$ the  second uncountable cardinal while $2^\omega$ denotes the cardinality of the continuum.
$[X]^{\leq\omega}$ denotes the family of all  finite and infinite countable  subsets of $X$.
The continuum hypothesis CH is the assertion that $2^\omega=\omega_1$.
Given a transitive relation $<$ on a set $P$, we say that $Q\subseteq P$ is directed 
if, for every $p, q\in Q$, there is some $r\in Q$ such that $p,q<r$; $Q$ is cofinal 
if, for every $p\in P$, there is some $q\in Q$ such that $p<q$ and $Q$ is coinitial if,
for every $p\in P$, there is some $q\in Q$ such that $q<p$.
When we consider several partial orders, then $<$-directed, $<$-cofinal, $<$-coinitial are
the corresponding notions for $<$.

We would like to thank I. Farah and J. Steprans for valuable comments which improved the paper.

\section{Preliminaries}

\subsection{Approximate relations and approximate units}

\begin{definition} Suppose that $A$ is a C*-algebra, $a, b\in A$,   $U\subseteq A^1_+$
and $\varepsilon>0$.
\begin{enumerate}
\item $a\ll_\varepsilon b$ if and only if $\|a-ab\|<\varepsilon$,
\item $U$ is $\ll$-approximately cofinal in $B\subseteq A$ if for every $b\in B$ and every $\varepsilon>0$
there is $u\in U$ with $b\ll_\varepsilon u$,
\item $U$ is $\ll$-approximately directed   if for every $a, b\in U$ and every $\varepsilon>0$
there is $u\in U$ with $a, b\ll_\varepsilon u$.
\end{enumerate}
\end{definition}

\begin{proposition}\label{triangle} Suppose that $A$ is a C*-algebra.
For all $a,b,c\in A^1_+$,
$$\|a-ab\|\leq \|a-ac\|+\|c-cb\|,$$
and consequently 
for all $\varepsilon,\delta>0$ we have that
$a\ll_\varepsilon c\ll_\delta b$ implies $a\ll_{\varepsilon+\delta}b$.
\end{proposition}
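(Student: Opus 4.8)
The plan is to prove the inequality $\|a-ab\| \le \|a-ac\| + \|c-cb\|$ for $a,b,c \in A^1_+$, and then obtain the triangle-type consequence for $\ll_\varepsilon$ by unwinding the definitions. The key algebraic observation is that $a - ab = (a-ac)(1-b) + (ac - acb) = (a-ac)(1-b) + a(c-cb)$; this identity is checked by expanding both sides in $\widetilde A$. From here the triangle inequality follows by taking norms and using submultiplicativity.

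First I would write $a - ab = (a-ac)b^\perp + a(c-cb)$, recalling that $b^\perp = 1 - b \in \widetilde A$. Then $\|a-ab\| \le \|a-ac\|\,\|b^\perp\| + \|a\|\,\|c-cb\|$. The point is that $\|b^\perp\| \le 1$ and $\|a\| \le 1$: since $b \in A^1_+$ we have $\sigma(b) \subseteq [0,1]$, so $\sigma(b^\perp) = \sigma(1-b) \subseteq [0,1]$ and hence $\|b^\perp\| \le 1$ (note $b^\perp$ is self-adjoint, so its norm equals its spectral radius); and $\|a\| \le 1$ because $a \in A^1_+ \subseteq A^1$. Combining these gives $\|a-ab\| \le \|a-ac\| + \|c-cb\|$, as desired.

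For the consequence, suppose $a \ll_\varepsilon c$ and $c \ll_\delta b$, i.e. $\|a-ac\| < \varepsilon$ and $\|c-cb\| < \delta$. Plugging into the inequality just proved yields $\|a-ab\| < \varepsilon + \delta$, which is exactly $a \ll_{\varepsilon+\delta} b$.

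I do not anticipate a serious obstacle here; the only point requiring a moment's care is justifying $\|b^\perp\| \le 1$, which is immediate from functional calculus once one notes $0 \le b \le 1$ implies $0 \le 1-b \le 1$ in $\widetilde A$, and $a,c$ lying in the positive unit ball is used for $\|a\| \le 1$ (the factor in front of $c-cb$) and, implicitly, to keep all the norms that appear bounded by $1$ so that no extra constants creep in. The identity $a-ab = (a-ac)b^\perp + a(c-cb)$ itself is a routine expansion and needs no comment beyond stating it.
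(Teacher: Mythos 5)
Your proof is correct and is essentially the paper's argument: the identity $a-ab=(a-ac)b^\perp+a(c-cb)$ is exactly the paper's decomposition $ab^\perp=a(c^\perp+c)b^\perp=ac^\perp b^\perp+acb^\perp$, followed by the same norm estimates using $\|a\|,\|b^\perp\|\leq1$. No issues.
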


\begin{proof}
As $\|a\|,\|b^\perp\|\leq1$,
\[\|a-ab\|=\|ab^\perp\|=
\|a(c^\perp+c)b^\perp\|\leq\|ac^\perp\|||b^\perp\|+\|a\|||cb^\perp\|\leq\|a-ac\|+\|c-cb\|.\qedhere\]
\end{proof}

\begin{proposition}\label{cofinal-directed} Let $A$ be a $C^*$-algebra.
$\ll$-approximately cofinal sets in $A^1_+$  are $\ll$-approximately directed.
\end{proposition}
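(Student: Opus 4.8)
The plan is to interpose between $a,b$ and the desired $u$ a single element $c\in A^1_+$ that serves as a common approximate upper bound for $a$ and $b$ in the sense of $\ll_\delta$, and then to obtain $u$ above $c$ from cofinality; the triangle inequality of Proposition \ref{triangle} then does the rest. Concretely, I would fix $U\subseteq A^1_+$ which is $\ll$-approximately cofinal, elements $a,b\in U$ and $\varepsilon>0$, and set $\delta=\varepsilon/2$.

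The one substantive step is to produce $c\in A^1_+$ with $a\ll_\delta c$ and $b\ll_\delta c$. Put $d=\tfrac12(a+b)\in A^1_+$, choose $\eta\in(0,\delta^2/2)$, and let $f\colon[0,1]\to[0,1]$ be continuous with $f(0)=0$ and $f(t)=1$ for $t\ge\eta$; put $c=f(d)$. Since $f(0)=0$, we have $c\in A^1_+$ even when $A$ is nonunital. Writing $g=1-f$ (so $0\le g\le1$, $g(0)=1$, and $g$ vanishes on $[\eta,1]$) we get $a-ac=ag(d)$, and using $0\le a^2\le a\le a+b=2d$ together with $g(d)\,d\,g(d)=(t\mapsto tg(t)^2)(d)$,
\[\|ag(d)\|^2=\|g(d)a^2g(d)\|\le\|g(d)ag(d)\|\le 2\|g(d)\,d\,g(d)\|\le 2\sup_{t\in\sigma(d)}tg(t)^2\le 2\eta<\delta^2.\]
Hence $a\ll_\delta c$, and by the symmetric argument (using $0\le b\le 2d$) also $b\ll_\delta c$.

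Finally, since $c\in A^1_+$, $\ll$-approximate cofinality of $U$ yields $u\in U$ with $c\ll_\delta u$, and then Proposition \ref{triangle} gives $a\ll_{2\delta}u$ and $b\ll_{2\delta}u$, i.e. $a,b\ll_\varepsilon u$, as required. The only point requiring genuine work is the functional-calculus estimate producing $c$ (a standard approximate-unit computation); the rest is bookkeeping with Proposition \ref{triangle}. The mild technical subtleties are ensuring $c$ lies in $A$ rather than merely $\widetilde A$ when $A$ is nonunital — guaranteed by $f(0)=0$ — and checking that cofinality is being invoked for the element $c$, which indeed belongs to the positive unit ball.
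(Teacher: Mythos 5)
Your proof is correct. The core idea coincides with the paper's: average $a$ and $b$ into $d=\tfrac12(a+b)$ and exploit the quadratic estimate $\|ax\|^2=\|xa^2x\|\le\|xax\|\le2\|xdx\|$ for positive contractions $x$. The difference is purely in the routing. The paper applies $\ll$-approximate cofinality directly to $d$ with tolerance $\varepsilon^2/2$, taking $x=u^\perp$ in the estimate to get $\|a-au\|^2\le2\|d-du\|<\varepsilon^2$ in one step, with no triangle inequality and no functional calculus. You instead take $x=g(d)$ to manufacture an intermediate element $c=f(d)\in A^1_+$ with $a,b\ll_\delta c$, then invoke cofinality for $c$ and finish with Proposition \ref{triangle}. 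Your detour costs a factor of $2$ in $\varepsilon$ and an extra functional-calculus verification (including the check that $f(0)=0$ keeps $c$ in $A$), but it buys nothing here since cofinality may be applied to $d$ just as well as to $c$; the paper's version is the same argument with the middleman removed. All your individual estimates check out.
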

\begin{proof}
Suppose that a subset $U$ of a C*-algebra is $\ll$-approximately cofinal in $A^1_+$.
 To see that $U$ is $\ll$-approximately directed, take $a,b\in U$ and let $c=\frac{1}{2}(a+b)$.  For any $u\in A^1_+$,
\[\|a-au\|^2=\|u^\perp a^2 u^\perp\|\leq\|u^\perp au^\perp\|
\leq2\|u^\perp cu^\perp\|\leq2\|cu^\perp\|=2\|c-cu\|.\]
Likewise, $\|b-bu\|^2\leq2\|c-cu\|$.  Now let $\varepsilon>0$.
By $\ll$-approximate cofinality of $U$ in $A^1_+$ there is $u\in U$ such that
$\|c-cu\|<\varepsilon^2/2$ and so $\|a-au\|, \|b-bu\|<\varepsilon$
which proves that $U$ is $\ll$-approximately directed.
\end{proof}

By a \emph{net} we mean a set $(u_\lambda)_{\lambda\in\Lambda}$ indexed by a directed set $(\Lambda,\prec)$, for some transitive relation $\prec$.  Some authors prefer the relation $\prec$ to also be reflexive but this is unnecessary.  In any case, if so desired, one can always consider the relation $\preceq$ instead where
$\lambda\preceq\mu$ if and only if $\lambda\prec\mu$ or $\lambda=\mu$.
In general, the indexing set $\Lambda$ of a net $(u_\lambda)_{\lambda\in\Lambda}$ is of vital importance. 
 For example, if we have a net $(r_\lambda)_{\lambda\in \Lambda}\subseteq\mathbb{R}$
  and all we know is that the underlying set $\{r_\lambda:\lambda\in\Lambda\}$ 
  is the entirety of $\mathbb{Q}$ then we have no idea what $(r_\lambda)_{\lambda\in \Lambda}$ 
  converges to or even if it converges at all.  However, with approximate units, 
  this is not the case \textendash\, they always converge to $1\in A^{**}$ 
  (w.r.t. the weak, weak* or strong topology) and the indexing set is often irrelevant. 
   This basic but important point is often overlooked, and one we wish to make precise  in this section.  
   This will also allow us to work with subsets 
   rather than nets and avoid irrelevant questions related 
   to the indexing set (e.g. if $(u_\lambda)$ is an approximate 
   unit for $A$ with 
   $|\{u_\lambda:\lambda\in\Lambda\}|=\omega_1$ but $|\Lambda|=\omega_2$,
    then is $A$ really $\omega_1$-unital?  The answer is yes, as we shall soon see).

\begin{proposition}\label{dcof=au}
Let $A$ be a $C^*$-algebra and   $U\subseteq A^1_+$. 
$U$ is $\ll$-approximately cofinal in $A^1_+$ if and only if there is a directed set $\Lambda$ 
and  an approximate unit $(u_\lambda)_{\lambda\in \Lambda}$ such that
$U=\{u_\lambda:\lambda\in\Lambda\}$.  Moreover, $\Lambda$ can be taken
to have cardinality equal to the cardinality of $U$.
\end{proposition}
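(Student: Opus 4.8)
The plan is to prove both directions of \autoref{dcof=au}, the forward (hard) direction constructing a net from a $\ll$-approximately cofinal set, and the backward (easy) direction showing any approximate unit is $\ll$-approximately cofinal.

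For the backward direction: suppose $(u_\lambda)_{\lambda\in\Lambda}$ is an approximate unit and $U=\{u_\lambda:\lambda\in\Lambda\}$. Given $b\in A^1_+$ and $\varepsilon>0$, since $\|b-bu_\lambda\|\to 0$ along $\Lambda$, there is some $\lambda$ with $\|b-bu_\lambda\|<\varepsilon$, i.e. $b\ll_\varepsilon u_\lambda\in U$. So $U$ is $\ll$-approximately cofinal in $A^1_+$. (Strictly, this holds for all of $A^1_+$ because approximate units satisfy $\|a-au_\lambda\|\to 0$ for all $a\in A$, in particular all $a\in A^1_+$.)

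For the forward direction, the key idea is to use the finite subsets of $U$, paired with tolerances, as the indexing set. First I would invoke \autoref{cofinal-directed} to get that $U$ is $\ll$-approximately directed as well as cofinal. Then set $\Lambda$ to be the collection of all pairs $\lambda=(F,\varepsilon)$ where $F\in[U]^{<\omega}$ is a finite subset of $U$ and $\varepsilon>0$, equipped with a transitive relation $\prec$ declaring $(F,\varepsilon)\prec(G,\delta)$ when $G\supseteq F$, $\delta<\varepsilon$, and additionally every $a\in F$ satisfies $a\ll_\delta u$ for the chosen witness $u$ at $(G,\delta)$ — or, more cleanly, first define the index set and the assignment $\lambda\mapsto u_\lambda$ together by recursion/choice. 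The cleanest route: for each $\lambda=(F,\varepsilon)$, use $\ll$-approximate directedness (applied finitely many times via \autoref{triangle}, or in one step if directedness is phrased for finite sets) to pick $u_\lambda\in U$ with $a\ll_\varepsilon u_\lambda$ for all $a\in F$. Order $\Lambda$ by $(F,\varepsilon)\prec(G,\delta)$ iff $F\subseteq G$ and $\delta\le\varepsilon$ and $u_{(F,\varepsilon)}\in G$. This $\prec$ is transitive and directed: given $(F,\varepsilon),(G,\delta)$, the pair $(F\cup G\cup\{u_{(F,\varepsilon)},u_{(G,\delta)}\},\min(\varepsilon,\delta))$ dominates both. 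Then $(u_\lambda)_{\lambda\in\Lambda}$ is an approximate unit: given $a\in A$ and $\eta>0$, by $\ll$-approximate cofinality in $A^1_+$ we need $a$ itself in $A^1_+$; for general $a$ scale and use that approximate units are defined by $\|a-au_\lambda\|\to 0$, reducing to $a\in A^{=1}$ or $a\in A^1_+$ by noting $\|a-au_\lambda\|\le\|a\|\cdot\|a/\|a\|-(a/\|a\|)u_\lambda\|$. Pick $u_0\in U$ with $\|a-au_0\|<\eta$ (cofinality), let $\lambda_0=(\{u_0\},\eta)\in\Lambda$; then for any $\mu\succ\lambda_0$, say $\mu=(G,\delta)$, we have $u_0\in G$ and $\delta\le\eta$, so $u_0\ll_\eta u_\mu$ — wait, we need $\|a-au_\mu\|$ small, so apply \autoref{triangle}: $a\ll_\eta u_0\ll_\delta u_\mu$ gives $a\ll_{\eta+\delta}u_\mu$, hence $\|a-au_\mu\|<\eta+\delta\le 2\eta$. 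Since $\eta$ was arbitrary this shows convergence, and clearly $\{u_\lambda:\lambda\in\Lambda\}\subseteq U$; conversely every $u\in U$ equals $u_{(\{u\},\varepsilon)}$ for small enough $\varepsilon$ — actually we should ensure surjectivity onto $U$, so modify the choice to set $u_{(\{u\},\varepsilon)}=u$ when $u\ll_\varepsilon u$ already holds... but $u\ll_\varepsilon u$ may fail; instead simply throw in, for each $u\in U$, a dedicated index $\lambda_u$ with $u_{\lambda_u}=u$, ordered above appropriately — or just observe that the set $\{u_\lambda:\lambda\in\Lambda\}$ may be a subset of $U$ which is still $\ll$-approximately cofinal, and note the statement only requires producing \emph{some} such net with underlying set exactly $U$; handle exact equality by augmenting $\Lambda$ with extra indices naming each element of $U$, placed below the diagonal cofinal part so they don't disturb directedness or convergence.

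For the cardinality clause: $[U]^{<\omega}\times(0,\infty)$ has cardinality $|U|\cdot\mathfrak{c}$ which is too big, so instead index by $[U]^{<\omega}\times\N$ using tolerances $1/n$, giving $|\Lambda|=\max(|U|,\omega)=|U|$ since $U$ is infinite when $A$ is nonunital (and the unital case is trivial). So I would phrase $\Lambda$ using $1/n$ tolerances from the start, which also streamlines the directedness verification. The main obstacle is bookkeeping: ensuring the relation $\prec$ is genuinely transitive and directed while the underlying set of the net is \emph{exactly} $U$ (not merely a subset), and checking the convergence estimate cleanly via \autoref{triangle} — none of this is deep, but the interaction between ``directed net'' and ``underlying set equals $U$'' requires care, and I expect that is where a careless proof would slip.
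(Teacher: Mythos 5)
Your argument is correct in substance but builds the index set differently from the paper, and the difference matters for the bookkeeping you yourself flag as the danger point. The paper takes $\Lambda=U\times\N$ with $u_{(a,n)}=a$ and orders it by $(a,m)\prec(b,n)$ iff $\|a-ab\|<\tfrac{1}{m}-\tfrac{1}{n}$: transitivity is then exactly the triangle inequality of Proposition \ref{triangle}, directedness is exactly Proposition \ref{cofinal-directed}, the underlying set of the net is $U$ by construction, and $|\Lambda|=|U|$ comes for free. Your indexing by $[U]^{<\omega}\times\N$ with chosen witnesses $u_{(F,1/n)}$ also works \textendash\, your order is transitive and directed as you check, and the convergence estimate via Proposition \ref{triangle} is the same \textendash\, but it forces you to (i) upgrade the \emph{pairwise} approximate directedness of Proposition \ref{cofinal-directed} to finite sets (fine: iterate with halved tolerances through Proposition \ref{triangle}, or average $n$ elements in the proof of Proposition \ref{cofinal-directed}), and (ii) patch surjectivity onto $U$ by adjoining minimal indices $\lambda_u$ below the cofinal part. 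Your patch is workable, but it is precisely the complication that the paper's choice of $\Lambda$, where the net value is read off the index itself, is designed to avoid.

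Two small gaps to close. First, in verifying that the net is an approximate unit you only reduce a general $a\in A$ to $a\in A^{=1}$ by scaling; since $\ll$-approximate cofinality is assumed only on $A^1_+$, you still need the standard step from $a\in A^1$ to $a^*a\in A^1_+$, namely $\|a-au_\lambda\|^2=\|u_\lambda^\perp a^*au_\lambda^\perp\|\leq\|a^*a-a^*au_\lambda\|$. Second, the finite-$U$ case should be handled explicitly rather than dismissed: finiteness together with approximate cofinality forces a single $u\in U$ with $a\ll u$ for all $a\in A^1_+$ (so $u$ is a unit), and one then takes $\Lambda=U$ to get $|\Lambda|=|U|$; note that $A$ being unital does not by itself make $U$ finite, so the relevant dichotomy is on $|U|$, not on unitality of $A$.
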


\begin{proof}
If $(u_\lambda)_{\lambda\in\Lambda}$ is an approximate unit, then
 $\{u_\lambda:\lambda\in\Lambda\}$ is immediately seen
  to be $\ll$-approximately cofinal.  Conversely, say $U$ is $\ll$-approximately
  cofinal in $A^1_+$ and consider $\Lambda=U\times\mathbb{N}$ ordered by
\[(a,m)\prec(b,n)\qquad\hbox{if and only if}\qquad\|a-ab\|<\tfrac{1}{m}-\tfrac{1}{n}.\]
We first claim that $\prec$ is transitive.  Indeed, if $(a,m)\prec(c,o)\prec(b,n)$ then, by \ref{triangle},
\[\|a-ab\|\leq\|a-ac\|+\|c-cb\|<\tfrac{1}{m}-\tfrac{1}{o}+\tfrac{1}{o}-\tfrac{1}{n}=\tfrac{1}{m}-\tfrac{1}{n},\]
i.e. $(a,m)\prec(b,n)$.

We next claim that $\Lambda$ is $\prec$-directed.  As $U$ is $\ll$-approximately directed
by Proposition \ref{cofinal-directed}, for any $(a,m),(b,n)\in\Lambda$, we can take $o>m+n$ and $u\in U$ with
\[\|a-au\|+\|b-bu\|<\tfrac{1}{m+n}-\tfrac{1}{o},\]
so $(a,m),(b,n)\prec(u,o)$, by the definition of $\prec$, i.e. $\Lambda$ is $\prec$-directed.

For each $\lambda=(a,n)\in\Lambda$, set $u_\lambda=a$.  
We claim that the resulting net $(u_\lambda)_{\lambda\in \Lambda}$ is an approximate unit.  
To see this, take any $a\in A^1_+$ and $n\in\mathbb{N}$.  
As $U$ is $\ll$-approximately cofinal in $A^1_+$, we have $u\in U$ with $\|a-au\|<\frac{1}{n}$.  
So $(u,n)\in\Lambda$ and by \ref{triangle}, for any $(w,m)\succ(u,n)$,
\[\|a-aw\|\leq\|a-au\|+\|u-uw\|<\tfrac{1}{n}+\tfrac{1}{n}-\tfrac{1}{m}<\tfrac{2}{n}.\]
Thus $\|a-au_\lambda\|$ converges to $0$ for $\lambda\in \Lambda$.  For any $a\in A^1$, we have $a^*a\in A^1_+$ so 
\[\|a-au_\lambda\|^2=
\|u_\lambda^\perp a^*au_\lambda^\perp\|\leq\|a^*au_\lambda^\perp\|=
\|a^*a-a^*au_\lambda\|\rightarrow0.\]
Finally, for any non-zero $a\in A$, 
we have $\|a-au_\lambda\|$ converges to zero for $\lambda\in \Lambda$
by the homogeneity of the norm,
so $(u_\lambda)_{\lambda\in \Lambda}$ is indeed an approximate unit.

When $U$ is infinite, we have $|\Lambda|=|U\times\mathbb{N}|=|U|$.  
When $U$ is finite, $\ll$-approximate cofinality means that we have a single element
 $u\in U$ with $\|a-au\|=0$, i.e. $a\ll u$, for all $a\in A^1_+$ (and hence $u=1$).  Thus, taking $\Lambda=U$ and $\prec$
equal to $\ll$, we again have an approximate unit $(u_u)_{u\in\Lambda}$ with $|\Lambda|=|U|$.
\end{proof}

In  Proposition \ref{cofinal-directed}, 
we saw that every $\ll$-approximately cofinal $U$ is $\ll$-approximately directed.  
We are interested in $U$ satisfying the stronger notion of being $\ll$-directed.

\begin{proposition}\label{cf+dir=inc} Suppose that 
$A$ is a C*-algebra and let $U\subseteq A^1_+$. The following are equivalent:
\begin{enumerate}
\item $U$ is $\ll$-approximately cofinal in $A$ and $\ll$-directed,
\item  There is a $\ll$-increasing approximate unit $(u_\lambda)_{\lambda\in \Lambda}$
with $U=\{u_\lambda:\lambda\in\Lambda\}$.
\end{enumerate}
\end{proposition}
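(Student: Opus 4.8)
The plan is to prove the two implications directly, the key point being that the relation $\ll$ restricted to $U$ is itself transitive and directed, so $U$ can serve as its own index set.

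For $(2)\Rightarrow(1)$, I would start from a $\ll$-increasing approximate unit $(u_\lambda)_{\lambda\in\Lambda}$ with $U=\{u_\lambda:\lambda\in\Lambda\}$. That $U$ is $\ll$-approximately cofinal in $A$ is immediate from the definition of an approximate unit: given $b\in A$ and $\varepsilon>0$, since $\|b-bu_\lambda\|\to0$ there is some $\lambda$ with $b\ll_\varepsilon u_\lambda$. For $\ll$-directedness, given $a=u_\lambda$ and $b=u_\mu$ in $U$, directedness of $\Lambda$ provides $\nu$ with $\lambda,\mu\prec\nu$, and then $a=u_\lambda\ll u_\nu$ and $b=u_\mu\ll u_\nu$ by the $\ll$-increasing hypothesis, with $u_\nu\in U$ as required.

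For $(1)\Rightarrow(2)$, I would take $\Lambda=U$ with $\prec$ equal to $\ll$ and set $u_a=a$ for $a\in U$. First one must see that $(U,\ll)$ is a legitimate index set: $\ll$ is transitive (from $a=ab$ and $b=bc$ one computes $ac=(ab)c=a(bc)=ab=a$), and $(U,\ll)$ being directed is precisely the hypothesis that $U$ is $\ll$-directed. The net $(u_a)_{a\in U}$ is then $\ll$-increasing by construction and has underlying set $U$, so the only step needing work is verifying that it is an approximate unit. Given $b\in A$ and $\varepsilon>0$, $\ll$-approximate cofinality in $A$ gives $u\in U$ with $\|b-bu\|<\varepsilon$; then for any $v\in U$ with $u\ll v$ we have $u(1-v)=0$, hence $b(1-v)=b(1-u)(1-v)+bu(1-v)=b(1-u)(1-v)$, and since $0\le v\le1$ forces $\|1-v\|\le1$ this gives $\|b-bv\|\le\|b-bu\|\,\|1-v\|<\varepsilon$. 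So from the index $u$ onwards the net stays within $\varepsilon$ of $b$, i.e. $\|b-bu_\lambda\|\to0$.

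I do not anticipate a real obstacle. The two points to be careful about are: in the last computation one deals with an arbitrary $b\in A$, so Proposition \ref{triangle} (stated for $A^1_+$) does not apply verbatim and one argues instead via $u(1-v)=0$ and $\|1-v\|\le1$; and one should note that $\{v\in U:u\ll v\}$ is non-empty, so that convergence along the net $(U,\ll)$ is not vacuous, which follows by applying $\ll$-directedness to the pair $(u,u)$.
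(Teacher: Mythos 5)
Your proof is correct and follows essentially the same route as the paper: both use $U$ itself as the index set ordered by $\ll$ and propagate the approximation from $u$ to any $v\gg u$ via a triangle-inequality-type estimate (the paper invokes Proposition \ref{triangle} for $a\in A^1_+$ and then reduces general $a$ to $a^*a$ as at the end of Proposition \ref{dcof=au}, whereas you handle arbitrary $b\in A$ directly through $bu(1-v)=0$ and $\|1-v\|\le1$ — a harmless variation). Your extra care about transitivity of $\ll$ and non-vacuity of the tail is fine but not a point of divergence.
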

\begin{proof}
For the forward implication take $\Lambda=U$ and 
order it by $\ll$.  First take $a\in A^1_+$ and $\varepsilon>0$ and find $u\in U$ satisfying $\|a-au\|<\varepsilon$.
If $u\ll w$, by Proposition \ref{triangle} we have
$$\|a-aw\|\leq \|a-au\|+\|u-uw\|=\|a-au\|<\varepsilon.$$
The argument for remaining $a\in A$ that $\|a-au\|$ converges to zero for $u\in U$ is as at the end of
the proof of Proposition \ref{dcof=au}.
For the converse implication note that $U$ is $\ll$-directed
as $(\Lambda, \prec)$ is directed and $\lambda\prec\mu$ implies $u_\lambda\ll u_\mu$.
It is also clear that $U$ has to be $\ll$-approximately cofinal.
\end{proof}
Thus we are justified to introduce the following:
\begin{definition}\label{def-waybelow-unit} Suppose that $A$ is a C*-algebra.
$U\subseteq A^1_+$ is called a $\ll$-unit for $A$ if
it is $\ll$-approximately cofinal in $\A$ and $\ll$-directed.
\end{definition}

It follows that $A$ contains a $\ll$-unit if and only if 
$A$ is $\ll$-unital in the sense of Definition \ref{def-unital}.

The following elementary observation will be needed later in section 4.

\begin{lemma}\label{aiau-quotient}
If a C*-algebra $A$ is $\ll$-unital then so are all its quotients.
\end{lemma}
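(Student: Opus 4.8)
The plan is to take a $\ll$-unit $U \subseteq A^1_+$ for $A$ (which exists by Definition \ref{def-waybelow-unit} and the discussion following it, since $A$ is $\ll$-unital), let $\pi \colon A \to A/I$ be a quotient map onto a quotient $A/I$, and show that $\pi(U) = \{\pi(u) : u \in U\}$ is a $\ll$-unit for $A/I$. There are two things to verify: that $\pi(U)$ is $\ll$-directed, and that $\pi(U)$ is $\ll$-approximately cofinal in $(A/I)^1_+$. Note first that $\pi$ maps $A^1_+$ into $(A/I)^1_+$, since a $*$-homomorphism is positive and norm-decreasing, so $\pi(U) \subseteq (A/I)^1_+$ as required.

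For $\ll$-directedness: given $\pi(a), \pi(b) \in \pi(U)$ with $a, b \in U$, use that $U$ is $\ll$-directed to pick $w \in U$ with $a \ll w$ and $b \ll w$, i.e.\ $a = aw$ and $b = bw$. Applying $\pi$ gives $\pi(a) = \pi(a)\pi(w)$ and $\pi(b) = \pi(b)\pi(w)$, that is $\pi(a) \ll \pi(w)$ and $\pi(b) \ll \pi(w)$ with $\pi(w) \in \pi(U)$. So $\pi(U)$ is $\ll$-directed.

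For $\ll$-approximate cofinality: fix $\bar c \in (A/I)^1_+$ and $\varepsilon > 0$. The only slightly nontrivial point is to lift $\bar c$ to a positive contraction in $A$. This is standard: the induced map $\widetilde{\pi}\colon \widetilde A \to \widetilde{A/I}$ is a surjective unital $*$-homomorphism, and by functional calculus / the standard lifting lemma for positive contractions (e.g.\ one can lift any $\bar c \geq 0$ to some $c' \in A_+$ and then replace it by $f(c')$ for a suitable continuous $f$, or simply invoke that $\pi$ restricts to a surjection $A^1_+ \twoheadrightarrow (A/I)^1_+$, which follows from \cite[II.5.1.1]{Blackadar2017} type results), there is $c \in A^1_+$ with $\pi(c) = \bar c$. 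Now by $\ll$-approximate cofinality of $U$ in $A$, choose $u \in U$ with $\|c - cu\| < \varepsilon$. Then
\[
\|\bar c - \bar c\,\pi(u)\| = \|\pi(c - cu)\| \leq \|c - cu\| < \varepsilon,
\]
so $\bar c \ll_\varepsilon \pi(u)$ with $\pi(u) \in \pi(U)$. Hence $\pi(U)$ is $\ll$-approximately cofinal in $(A/I)^1_+$, and therefore a $\ll$-unit for $A/I$, so $A/I$ is $\ll$-unital.

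The main (and only real) obstacle is the lifting of a positive contraction $\bar c \in (A/I)^1_+$ to a positive contraction $c \in A^1_+$; everything else is a direct push-forward through the $*$-homomorphism. I would handle the lift by taking any positive preimage and applying the continuous function $t \mapsto \min(t,1)$ via functional calculus, which preserves the image under $\pi$ and lands in $A^1_+$.
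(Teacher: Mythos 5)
Your proof is correct and follows essentially the same route as the paper: push the $\ll$-unit forward through the quotient map, using that $\pi$ preserves $\ll$ and is norm-decreasing. The only difference is that you make explicit the (standard) lifting of positive contractions, $\pi[A^1_+]=(A/I)^1_+$, which the paper's one-line argument uses implicitly.
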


\begin{proof}
If $B$ is a quotient of $A$ and $\pi$ is the canonical homomorphism from $A$ onto $B$ then 
$\|\pi(a)-\pi(a)\pi(b)\|\leq\|a-ab\|$, as $\pi$ is norm decreasing.  In particular, 
$\ll$-approximate cofinality is preseverved by $\pi$, as is $\ll$.  Thus $\pi$ takes any $\ll$-unit in $A$ to a $\ll$-unit in $B$.
\end{proof}

\noindent{\bf Remarks:} The key in this subsection was to analyze the binary function $\mathbf{d}$ on $A$ defined by
\[\mathbf{d}(a,b)=\|a-ab\|.\]
The first thing to note is that, as with metrics, $\mathbf{d}$ satisfies a triangle inequality on $A^1_+$,
as shown in Proposition \ref{triangle} and \cite[Proposition 1.2]{bice-vignati} (as $\mathbf{d}$ is not symmetric, this is just one of several possible triangle inequalities, but this is the one that corresponds to transitivity).  Note `$\ll$-approximately cofinal' and `$\ll$-approximately directed' are called `$\mathbf{d}$-cofinal' and `$\mathbf{d}$-directed' in \cite{bice-vignati}, where various other `approximate' order properties relative to $\mathbf{d}$ and $\mathbf{h}(a,b)=\|(a-b)_+\|$ are also considered.

It is a classical fact that every C*-algebra possesses an approximate unit, 
and hence a $\ll$-approximately cofinal subset.  In fact, Proposition \ref{dcof=au} provides a somewhat non-standard proof of this, once we note that the entirety of $A^1_+$ is always $\ll$-approximately cofinal.  Indeed, for all $a\in A^1_+$,
\[\inf_{u\in A^1_+}\|a-au\|\leq\lim\|a-a^{1/n}a\|=0.\]

\subsection{The Pedersen Ideal}

As in \cite[II.5.2.4]{Blackadar2017}, we consider $A_\mathrm{c}\subseteq A_+$ given by
\begin{align}
\nonumber A_{\mathrm{c}}&=\{a\in A_+:\exists b\in A_+\ (a\ll b)\}.\\
\nonumber &=\{(a-\varepsilon)_+:a\in A_+\text{ and }\varepsilon>0\}.\\
\label{f} &=\{f(a):a\in A_+\text{ and }f\in C_0((0,\infty))_\mathrm{c}\}.
\end{align}
The ideal generated by $A_\mathrm{c}$ is the \emph{Pedersen ideal} $\mathrm{Ped}(A)$, the smallest dense ideal in $A$.  Any $\ll$-unit must be contained in $A_\mathrm{c}^1=A_\mathrm{c}\cap A^1$, and sometimes $A_\mathrm{c}^1$ itself forms a $\ll$-unit.

\begin{proposition}
$A_c^1$ is a $\ll$-unit if and only if $A_\mathrm{c}=\mathrm{Ped}(A)_+$.
\end{proposition}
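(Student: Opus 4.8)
The plan is to reduce everything to two elementary observations together with the classical structure of the Pedersen ideal. First, recall (or check by continuous functional calculus, as in \eqref{f}) that $A_\mathrm{c}^1$ is always $\ll$-approximately cofinal in $A$, so that $A_\mathrm{c}^1$ is a $\ll$-unit in the sense of Definition \ref{def-waybelow-unit} if and only if it is $\ll$-directed; and that for every $c\in A_\mathrm{c}$, say $c=(d-\varepsilon)_+$ with $d\in A_+$ and $\varepsilon>0$, there is $e\in A_\mathrm{c}^1$ with $c\ll e$ (take $e=g(d)$ for any $g\in C_0((0,\infty))_\mathrm{c}$ with $0\le g\le 1$ and $g\equiv 1$ on $[\varepsilon,\|d\|]$). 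Second, the following \emph{absorption} fact: if $c\ll e$ then $c$ and $e$ commute with $c=ce=ec$, and hence $0\le c'\le c\ll e$ forces $c'\ll e$ — indeed $ce^\perp=0$, so $0\le e^\perp c'e^\perp\le e^\perp ce^\perp=0$, giving $\|(c')^{1/2}e^\perp\|^2=\|e^\perp c'e^\perp\|=0$ and thus $c'=c'e$. Applying absorption shows in particular that $A_\mathrm{c}$ is hereditary: if $0\le b\le a\in A_\mathrm{c}$ and $a\ll e\in A_\mathrm{c}^1$, then $b\ll e$, so $b\in A_\mathrm{c}$; it is also plainly closed under multiplication by non-negative scalars. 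Note finally that $A_\mathrm{c}$ is $\ll$-directed if and only if $A_\mathrm{c}^1$ is, since $a\ll e\iff a/\|a\|\ll e$ and every common $\ll$-bound produced below lies in $A_\mathrm{c}^1$.

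For the implication ``$A_\mathrm{c}=\mathrm{Ped}(A)_+$ $\Rightarrow$ $A_\mathrm{c}^1$ is a $\ll$-unit'', fix $a,b\in A_\mathrm{c}^1$. Since $\mathrm{Ped}(A)$ is an ideal, $a+b\in\mathrm{Ped}(A)_+=A_\mathrm{c}$, so there is $e\in A_\mathrm{c}^1$ with $a+b\ll e$. As $0\le a\le a+b$ and $0\le b\le a+b$, absorption yields $a\ll e$ and $b\ll e$. Hence $A_\mathrm{c}^1$ is $\ll$-directed, so (being $\ll$-approximately cofinal) a $\ll$-unit.

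For the converse, suppose $A_\mathrm{c}^1$, equivalently $A_\mathrm{c}$, is $\ll$-directed. Given $a,b\in A_\mathrm{c}$, normalising and applying $\ll$-directedness produces $e\in A_\mathrm{c}^1$ with $a\ll e$ and $b\ll e$; then $a=ae$ and $b=be$, so $(a+b)e=a+b$, i.e. $a+b\ll e\in A_+$, and therefore $a+b\in A_\mathrm{c}$. Thus $A_\mathrm{c}$ is closed under addition; together with the two properties noted above, $A_\mathrm{c}$ is a hereditary cone. Since $\mathrm{Ped}(A)_+$ is the \emph{smallest} hereditary cone containing $A_\mathrm{c}$ — equivalently, $\mathrm{Ped}(A)_+=\{c\in A_+:c\le a_1+\dots+a_n$ for some $a_1,\dots,a_n\in A_\mathrm{c}\}$, the classical description of the positive part of the Pedersen ideal (see \cite[II.5.2]{Blackadar2017}, \cite{pedersen}) — we conclude $\mathrm{Ped}(A)_+\subseteq A_\mathrm{c}$. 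The reverse inclusion holds because $A_\mathrm{c}\subseteq\mathrm{Ped}(A)$ and $A_\mathrm{c}\subseteq A_+$. Hence $A_\mathrm{c}=\mathrm{Ped}(A)_+$.

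The bridge argument above is routine; the one genuinely non-elementary input is the structure theorem for $\mathrm{Ped}(A)_+$ as the hereditary cone generated by $A_\mathrm{c}$ (i.e. that every positive element of the algebraic ideal generated by $A_\mathrm{c}$ is dominated by a finite sum of elements of $A_\mathrm{c}$). This is classical but not formal: $A_\mathrm{c}$ need not itself be closed under addition — equivalently, $w^*w\in A_\mathrm{c}$ need not imply $ww^*\in A_\mathrm{c}$ — and it is precisely this dichotomy that the proposition pins down.
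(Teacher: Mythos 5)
Your proof is correct and follows essentially the same route as the paper's: the forward direction uses additivity of $\mathrm{Ped}(A)_+$ together with the absorption fact that $0\le a\le c\ll e$ forces $a\ll e$, while the converse shows $A_\mathrm{c}$ is an additive hereditary cone and invokes Pedersen's description $\mathrm{Ped}(A)_+=\{a\in A_+:a\le b_1+\cdots+b_k,\ b_i\in A_\mathrm{c}\}$ from \cite[Theorem 5.6.1]{pedersen}. The only differences are presentational: you spell out the absorption and hereditarity steps that the paper leaves implicit.
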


\begin{proof}
If $A_\mathrm{c}=\mathrm{Ped}(A)_+$ then, for any $a,b\in A_\mathrm{c}^1$, we have $a+b\in A_\mathrm{c}$ and hence $a+b\ll c$, for some $c\in A_+$.  Taking $f,g\in C_0((0,1])_+$ with $f\ll g$ and $f(1)=1$, we have $a+b\ll f(c)\ll g(c)$ and hence $a,b\ll f(c)\in A_\mathrm{c}^1$, i.e. $A_\mathrm{c}^1$ is $\ll$-directed.  By \eqref{f}, $A_\mathrm{c}^1$ is approximately $\ll$-cofinal and hence a $\ll$-unit.

Conversely, if $A_\mathrm{c}^1$ is a $\ll$-unit then $A_\mathrm{c}$ is an additive cone.  Indeed, for any $a,b\in A_\mathrm{c}^1$, we have $c\in A_\mathrm{c}^1$ with $a,b\ll c$ and hence $a+b\ll c$, so $a+b\in A_\mathrm{c}$.  Thus, by \cite[Theorem 5.6.1]{pedersen},
\begin{align*}
\mathrm{Ped}(A)_+&=\{a\in A_+:a\leq b_1+\cdots+b_k\text{ and }b_1,\cdots,b_k\in A_\mathrm{c}\}\\
&=\{a\in A_+:a\leq b\in A_\mathrm{c}\}\\
&=A_\mathrm{c}.\qedhere
\end{align*}
\end{proof}

When $A=C_0(X)$, for some locally compact $X$, $A_\mathrm{c}$ consists precisely of the positive elements with compact support.  As the compactly supported elements form an ideal (namely the Pedersen ideal) we have the following:

\begin{corollary}
If $A$ is commutative then $A_\mathrm{c}^1$ is a $\ll$-unit.
\end{corollary}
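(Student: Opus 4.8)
The plan is to apply the preceding Proposition, which reduces the claim to showing that $A_\mathrm{c} = \mathrm{Ped}(A)_+$ whenever $A = C_0(X)$ for a locally compact Hausdorff space $X$. First I would recall the concrete description of $A_\mathrm{c}$ in the commutative setting: for $a \in C_0(X)_+$, we have $a \in A_\mathrm{c}$ if and only if there is $b \in C_0(X)_+$ with $a = ab$, and since $ab$ vanishes outside $\{b \neq 0\}$ while equalling $a$ there, this forces $a$ to vanish outside a set on which $b$ is bounded away from zero. Conversely, given $a$ with compact support $K = \overline{\{a \neq 0\}}$, Urysohn's lemma for locally compact Hausdorff spaces (or functional calculus, taking $b = g(a)$ for suitable $g \in C_0((0,\infty))$ with $g \equiv 1$ near $\|a\|$) produces $b \in C_0(X)_+$ with $b \equiv 1$ on $K$, hence $a \ll b$. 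So $A_\mathrm{c} = \{a \in C_0(X)_+ : \mathrm{supp}(a) \text{ is compact}\}$, the positive part of $C_c(X)$.

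Next I would observe that $C_c(X)$ is an ideal of $C_0(X)$: it is clearly a linear subspace, and if $a \in C_c(X)$ and $f \in C_0(X)$ then $fa$ is supported in $\mathrm{supp}(a)$, which is compact. This ideal is dense in $C_0(X)$ (given $f \in C_0(X)$ and $\varepsilon > 0$, the set $\{|f| \geq \varepsilon\}$ is compact, so multiplying $f$ by an Urysohn function that is $1$ there and compactly supported gives a compactly supported approximation within $\varepsilon$). Since $\mathrm{Ped}(A)$ is by definition the smallest dense ideal, we get $\mathrm{Ped}(A) \subseteq C_c(X)$; on the other hand $C_c(X)$ is generated as an ideal by $A_\mathrm{c} \subseteq \mathrm{Ped}(A)$ — indeed every element of $C_c(X)$ is a linear combination of its positive and negative parts, each of which lies in $A_\mathrm{c}$ — so $C_c(X) \subseteq \mathrm{Ped}(A)$. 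Hence $\mathrm{Ped}(A) = C_c(X)$ and $\mathrm{Ped}(A)_+ = C_c(X)_+ = A_\mathrm{c}$. Applying the Proposition, $A_\mathrm{c}^1$ is a $\ll$-unit.

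I do not anticipate a serious obstacle here; the only point requiring a little care is the identification of $A_\mathrm{c}$ with $C_c(X)_+$ in the non-compactly-supported direction — one must ensure the function $b$ witnessing $a \ll b$ can itself be chosen in $C_0(X)$ rather than merely in the multiplier algebra $C_b(X)$, which is exactly why compact support of $a$ (equivalently, the membership $a \in A_\mathrm{c}$ as characterized by \eqref{f}) is the right hypothesis. Since the paragraph immediately preceding the corollary already asserts this identification ("$A_\mathrm{c}$ consists precisely of the positive elements with compact support") and notes that these form the Pedersen ideal, the cleanest write-up simply cites that remark and the Proposition, making the proof a one-liner.

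\begin{proof}
By the remark preceding the corollary, when $A = C_0(X)$ the set $A_\mathrm{c}$ is the positive part of $C_c(X)$, the ideal of compactly supported functions, which is the Pedersen ideal of $A$. Thus $A_\mathrm{c} = \mathrm{Ped}(A)_+$, and the previous proposition applies.
\end{proof}
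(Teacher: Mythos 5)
Your argument is correct and is essentially the paper's own: the paper's proof of this corollary is exactly the remark preceding it, namely that for $A=C_0(X)$ the set $A_\mathrm{c}$ is the positive part of $C_c(X)$, which is the Pedersen ideal, so $A_\mathrm{c}=\mathrm{Ped}(A)_+$ and the preceding proposition applies; you have simply supplied the routine verifications. One small caveat: your parenthetical functional-calculus alternative ($b=g(a)$ with $g\equiv1$ near $\|a\|$) does not actually work, since $a\ll g(a)$ forces $g\equiv1$ on all of $\sigma(a)\setminus\{0\}$, which is impossible for $g\in C_0((0,\infty))$ when $0$ is a limit point of $\sigma(a)$ --- but your primary Urysohn argument carries the proof, so this does not affect correctness.
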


However, $A_\mathrm{c}=\mathrm{Ped}(A)_+$ can also hold for non-commutative non-unital $A$, e.g. $C_0(\mathbb{N},M_2)$.  To obtain $A$ where $A_\mathrm{c}\neq\mathrm{Ped}(A)_+$, we extend $C_0(\mathbb{N},M_2)$ by one dimension to the C*-algebra $D$ of convergent $M_2$-sequences (with pointwise addition etc.) where the dimension drops to one in the limit.  More precisely, for any $\theta\in\mathbb{R}$, consider the projection $P_\theta\in M_2$ onto $\mathbb{C}(\sin\theta,\cos\theta)$, i.e.
\[P_\theta=\begin{bmatrix}\sin^2\theta&\sin\theta\cos\theta\\ \sin\theta\cos\theta&\cos^2\theta\end{bmatrix}\]
and define $D$ by
\begin{equation}\label{Ddef}
D=\{(a_n)_{n\in\N}\in \ell_\infty(M_2):\lim_{n\rightarrow\infty} a_n\in\mathbb{C}P_0\}.
\end{equation}

\begin{lemma}\label{scattered} $D$ is a scattered C*-algebra.
\end{lemma}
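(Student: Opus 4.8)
The plan is to show $D$ is scattered by exhibiting it as an essentially trivial extension whose ideal and quotient are both scattered (indeed AF), and invoking the standard fact that scatteredness is preserved by extensions. First I would identify the obvious short exact sequence attached to $D$: the evaluation-at-infinity map $\pi\colon D\to\C P_0\cong\C$, sending $(a_n)_n$ to $\lim_n a_n$, is a surjective $*$-homomorphism whose kernel is $I=C_0(\N,M_2)=\{(a_n)_n:\lim_n a_n=0\}$. Thus $0\to C_0(\N,M_2)\to D\to\C\to 0$ is exact. The quotient $\C$ is finite-dimensional, hence scattered, and the ideal $C_0(\N,M_2)=\bigoplus_{n}M_2$ (c$_0$-sum) is AF, hence scattered (its every quotient has a minimal projection, or directly: it is a direct sum of finite-dimensional algebras).

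The key step is then to quote the permanence of scatteredness under extensions: if $I\trianglelefteq A$ is an ideal with $I$ and $A/I$ both scattered, then $A$ is scattered. This is one of the many equivalent characterizations collected in the cited survey \cite{cantor-bendixson}; concretely, using the characterization ``$A$ is scattered iff every nonzero quotient of $A$ has a nonzero minimal projection,'' one argues that a nonzero quotient $A/J$ either meets $I$ nontrivially — in which case $(I+J)/J\cong I/(I\cap J)$ is a nonzero quotient of the scattered algebra $I$, so it contains a minimal projection of $A/J$ — or $I\subseteq J$, in which case $A/J$ is a nonzero quotient of $A/I$ and again contains a minimal projection. Applying this with $A=D$, $I=C_0(\N,M_2)$, $A/I=\C$ gives that $D$ is scattered.

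Alternatively, and perhaps more in keeping with the elementary tone of the preliminaries, I would give a direct topological-style argument: show every nonzero quotient $D/J$ of $D$ has a minimal projection. Identifying ideals of $D$: since $D$ is subhomogeneous and acts fiberwise, each closed ideal $J$ is determined by a subset $S\subseteq\N$ together with (when $\infty$ is ``in'' the support) possibly the one-dimensional summand at infinity; on each surviving fiber $n\in\N\setminus S$ the quotient still contains the full $M_2$, and the minimal projections of $M_2$ at such a coordinate (i.e. $(0,\dots,0,e,0,\dots)$ with $e$ a rank-one projection) descend to minimal projections of $D/J$; if instead $\N\setminus S$ is empty then $D/J$ is a quotient of $\C$ and is either $0$ or $\C$. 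Either way a minimal projection exists, so $D$ is scattered.

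The main obstacle is the bookkeeping of the ideal structure of $D$ in the direct argument — one must be careful about whether the limit fiber $\C P_0$ is included in a given ideal, and about the fact that an ideal can involve an infinite coordinate set $S$ whose closure behavior at $\infty$ interacts with the limit condition in \eqref{Ddef}. The extension argument sidesteps this entirely, so I would present the extension proof as the main line and relegate the fiberwise description to a remark; the only thing to verify carefully there is exactness of $0\to C_0(\N,M_2)\to D\to\C\to 0$, which is immediate from the definition \eqref{Ddef}, and the (standard, citable) closure of the class of scattered C*-algebras under extensions.
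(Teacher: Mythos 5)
Your proof is correct, but it takes a genuinely different route from the paper's. You present $D$ as the extension $0\to C_0(\N,M_2)\to D\to\C\to 0$ (which is indeed exact: the evaluation-at-infinity map $(a_n)_n\mapsto\lim_n a_n\in\C P_0\cong\C$ is a surjective $*$-homomorphism with kernel $c_0(M_2)$), note that ideal and quotient are scattered, and quote the permanence of scatteredness under extensions from \cite{cantor-bendixson}. That permanence fact is legitimate to use here \textendash\ the authors themselves invoke it later (in the construction of Example \ref{Bell2scattered}, citing \cite[Theorem 1.4]{cantor-bendixson}). The paper instead argues directly from Wojtaszczyk's characterization that $A$ is scattered iff every selfadjoint element has countable spectrum: for selfadjoint $(a_n)_n\in D$ with $\lim_n a_n=zP_0$, the eigenvalues of the $a_n$ (at most two per coordinate) accumulate only at $\{0,z\}$, so the closure of $\bigcup_n\sigma(a_n)$ is countable and contains $\sigma((a_n)_n)$. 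Your extension argument is structurally cleaner and scales to any unitization-by-finite-dimensions situation without computing spectra; the paper's spectral computation is more self-contained and is the same technique reused elsewhere (e.g.\ in Lemma \ref{subscattered} and for $c_0(D)$ in Example \ref{Bell2scattered}). Your secondary, fiberwise argument via minimal projections in quotients is the weakest part \textendash\ the description of the closed ideals of $D$ is stated loosely (the cleanest way to see it is again through the primitive ideal space, i.e.\ through the same extension) \textendash\ but since you explicitly relegate it to a remark and rest the proof on the extension argument, nothing essential is missing.
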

\begin{proof}  We may us the fact that a C*-algebra $A$ is scattered if, and only if, 
every selfadjoint element of $A$ has countable spectrum (1.4 of \cite{cantor-bendixson}).
If $(a_n)_{n\in\N}\in D$ is selfadjoint
and $\lim_{n\rightarrow\infty} a_n=zP_0$ for some $z\in \C$, then the elements
of the spectra of the $a_n$s must converge to $\{z, 0\}$ which is the spectrum of $zP_0$.
It follows that the closure $X$ of $\bigcup_{n\in \N}\sigma(a_n)$ is countable. Now if
$\lambda\not \in X$, then $(a_n)_{n\in\N}-\lambda1$ is invertible which completes the proof.
\end{proof}

Now consider the projections $p=(p_n)_{n\in \N}$ and $q=(q_n)_{n\in \N}$ in $D$
 defined by $p_n=P_0$ and $q_n=P_{1/n}$ for each $n\in \N$.

\begin{lemma}\label{no-way-above}
There is no $a\in D^1_+$ with $p, q\ll a$.
\end{lemma}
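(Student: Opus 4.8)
The plan is to argue by contradiction: suppose $a = (a_n)_{n\in\N} \in D^1_+$ satisfies $p \ll a$ and $q \ll a$. The point is that $p \ll a$ forces strong information in the limit, while $q \ll a$ forces incompatible information. First I would unpack what $p \ll a$ means: $p = pa$ means $p_n = p_n a_n$ for every $n$, i.e. $P_0 = P_0 a_n$ for all $n$. Passing to the limit and using that $a_n \to zP_0$ for some $z \in \C$ (by the definition \eqref{Ddef} of $D$), we get $P_0 = P_0 (z P_0) = z P_0$, so $z = 1$; hence $a_n \to P_0$. Moreover, $P_0 = P_0 a_n$ together with $a_n$ positive and $\|a_n\| \le 1$ forces $a_n$ to fix the range of $P_0$: since $0 \le a_n \le 1$ and $P_0 a_n = P_0$, taking adjoints $a_n P_0 = P_0$, so $a_n$ acts as the identity on $\operatorname{ran} P_0 = \C(0,1)$, and since $a_n \le 1$ this means the vector $(0,1)^T$ is an eigenvector of $a_n$ with eigenvalue $1$ and the orthogonal complement is $a_n$-invariant. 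In particular $a_n = P_0 \oplus (\text{something} \le 1$ on $\C(1,0))$, so actually $a_n$ is diagonal: $a_n = \begin{bmatrix} t_n & 0 \\ 0 & 1\end{bmatrix}$ with $0 \le t_n \le 1$.

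Next I would bring in $q \ll a$, i.e. $q_n = q_n a_n$, meaning $P_{1/n} = P_{1/n} a_n$ for each $n$. Fix $n$; write $\theta = 1/n$ and $s = \sin\theta$, $c = \cos\theta$, so $P_\theta$ projects onto $\C(s,c)^T$. The equation $P_\theta a_n = P_\theta$ says $a_n$ acts as the identity on the line $\C(s,c)^T$; but from the previous paragraph $a_n$ is the diagonal matrix $\operatorname{diag}(t_n, 1)$. For $\operatorname{diag}(t_n,1)$ to fix the vector $(s,c)^T$ we need $t_n s = s$, i.e. (since $s = \sin(1/n) \ne 0$) $t_n = 1$. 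Therefore $a_n = I$ (the $2\times 2$ identity) for every $n$. But then $(a_n)_{n\in\N} = (I)_{n\in\N} \notin D$, since $\lim_n a_n = I \notin \C P_0$ (the identity is not a scalar multiple of the rank-one projection $P_0$). This contradicts $a \in D$, completing the proof.

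The main obstacle — really the only place requiring a little care — is the step deducing that $a_n$ must be diagonal of the form $\operatorname{diag}(t_n,1)$ from $P_0 = P_0 a_n$ together with $0 \le a_n \le 1$. The cleanest route is: $P_0 a_n = P_0 \Rightarrow (1-P_0) a_n P_0 = (1 - P_0) P_0 = 0$, and taking adjoints and using $a_n = a_n^*$, $P_0 a_n (1 - P_0) = 0$; so $a_n$ commutes with $P_0$ and decomposes as $a_n = P_0 a_n P_0 \oplus (1-P_0) a_n (1 - P_0)$. On $\operatorname{ran} P_0$ we have $P_0 a_n P_0 = P_0 a_n = P_0$ is the identity, and on $\operatorname{ran}(1 - P_0)$ the corner $(1-P_0)a_n(1-P_0)$ is a positive scalar $t_n \in [0,1]$ since that space is one-dimensional. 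This yields $a_n = \operatorname{diag}(t_n, 1)$ in the basis where $P_0 = \operatorname{diag}(0,1)$. Alternatively one can avoid even this by noting that for positive contractions, $p \ll a$ with $p$ a projection is equivalent to $p \le a$ is equivalent to $pa = p$; I would just cite the relevant standard fact or prove the one-line version above. Everything else is direct computation with $2 \times 2$ matrices and the limit condition defining $D$.
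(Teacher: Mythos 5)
Your proof is correct and follows essentially the same route as the paper's: from $p_n a_n = p_n$ and $q_n a_n = q_n$ one deduces (via self-adjointness) that $a_n$ fixes both $(0,1)^T$ and $(\sin(1/n),\cos(1/n))^T$, which span $\C^2$, so $a_n = 1 \in M_2$, contradicting $\lim_n a_n \in \C P_0$. The extra detail you supply on why $a_n$ commutes with $P_0$ and is diagonal is a fine (slightly more explicit) version of the paper's "simultaneously diagonalized" step.
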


\begin{proof}  Aiming for a contradiction, assume that $p, q\ll a$, i.e. $p_na_n=p_n$ and $q_na_n=q_n$, for all $n\in\N$.
  In particular, $a_n$ commutes with both $p_n$ and $q_n$, as all the elements are self-adjoint.  This implies that $a_n$ can be simultaneously diagonalized with both $P_0$ and $P_{1/n}$ so $(0,1)$ and $(\sin(1/n),\cos(1/n))$ are eigenvectors for $a_n$.  As $p_na_n=p_n$ and $q_na_n=q_n$, the corresponding eigenvalues must both be $1$.  Thus $a_n=1\in M_2$, for all $n\in\N$, 
  so $\lim_{n\rightarrow\infty}a_n=1$, contradicting the definition of $D$.
\end{proof}

Note that the projections
\[\mathcal{P}(A)=\{p\in A_+:p\ll p\}\]
are always contained in $A_\mathrm{c}$.  Thus $D_\mathrm{c}$ is not a $\ll$-unit, by Lemma \ref{no-way-above}, and hence $D_\mathrm{c}\neq\mathrm{Ped}(D)_+$ (in fact $\mathrm{Ped}(D)=D$ in this case).  Nonetheless, it is not hard to explicitly define a smaller $\ll$-unit for $D$, e.g. the collection of $(u_n)_{n\in \N}\subseteq D$ of the form
\[u_n(m)=\begin{cases}1&\text{if }m<n,\\ P_0&\text{otherwise}.\end{cases}\]

\subsection{Trees}  To describe our examples in
the last section more precisely, we first need some basic tree related terminology and theory (see \cite{stevo-trees} for more background).

A \emph{tree} is a set $T$ together with a strict partial order $\prec$ such that $\{t:t\prec s\}$ is well-ordered, for all $s\in T$, i.e. $\prec$ is transitive, $s\nprec t$ or $t\nprec s$, for all $s,t\in T$, and every $S\subseteq\{t:t\prec s\}$ has a unique $\prec$-minimal element.  In particular, every pair $t,u\prec s$ has a $\prec$-minimum, i.e. $t\prec u$ or $u\prec t$, so $\{t:t\prec s\}$ is always linearly ordered.  In fact, $\{t:t\prec s\}$ is always order isomorphic to a unique ordinal $\alpha$, which we call the \emph{height} of $s$, denoted by $ht(s)$.  The height of $S\subseteq T$ is then defined by
\[ht(S)=\sup_{s\in S}(ht(s)+1).\]
In particular, $ht(s)=ht(\{t:t\prec s\})$.  The $\alpha^\mathrm{th}$ level is denoted by
\[Lev_\alpha(T)=\{s\in T:ht(s)=\alpha\}.\]
In particular, $ht(T)=\min\{\alpha:Lev_\alpha(T)=\emptyset\}$. 

A \emph{branch} of a tree $T$ is a maximal linearly ordered subset and a \emph{$\kappa$-branch} is a branch of height $\kappa$.  The sets of all branches and all $\kappa$-branches are denoted by $Br(T)$ and $Br_\kappa(T)$ respectively.  
Note that the linearity
of the tree order on the sets $\{t:t\prec s\}$ for $s\in T$ means that, for any branches $b,b'\in Br_{ht(T)}(T)$, $b\cap Lev_{\alpha'}(T)\not=b'\cap Lev_{\alpha'}(T)$ implies $b\cap Lev_\alpha(T)\not=b'\cap Lev_\alpha(T)$ whenver $\alpha'\leq\alpha<ht(T)$.
 The \emph{exit set} of a branch $b$ is defined by
\[b^\#=\{t\in T\setminus b:\{s:s\prec t\}\subseteq b\}.\]
We call a tree $T$ a
\begin{enumerate}
\item \emph{$\kappa$-tree} if $ht(T)=\kappa$ and $|Lev_\alpha(T)|<\kappa$, for all $\alpha<\kappa$.
\item \emph{Kurepa tree} if $T$ is a $\omega_1$-tree with $|Br_{\omega_1}(T)|>\omega_1$.
\item \emph{Canadian tree} if $ht(T)=\omega_1\geq Lev_\alpha(T)$, for all $\alpha$, and $|Br_{\omega_1}(T)|>\omega_1$.
\end{enumerate}

We consider $\{0,1\}^{<\kappa}=\{f\in \{0,1\}^\alpha:\alpha<\kappa\}$ as a tree ordered by end-extensions, i.e. $f\prec g$ if $\dom(f)\subseteq\dom(g)$ and $g|\dom(f)=f$.  We identify $\{0,1\}^\kappa$ with $Br(\{0,1\}^{<\kappa})$ via the map $f\rightarrow\{f|\alpha:\alpha<\kappa\}$.

\begin{lemma}\label{minimal-cardinal}
Let $\kappa$ be the minimal cardinal with $2^\omega<2^\kappa$.
\begin{enumerate}
\item $cf(\kappa)>\omega$.
%\item $cf(2^\kappa)>\kappa$.
\item $\kappa\leq 2^\omega$.
\setcounter{ecount}{\value{enumi}}
\end{enumerate}
Moreover, $(\{0,1\}^{<\kappa}, \prec)$ is a tree such that
\begin{enumerate}
\setcounter{enumi}{\value{ecount}}
\item $ht(T)=\kappa$,
\item $T$ has  cardinality $2^\omega$ 
%(in particular, $Lev_\alpha(T)\leq 2^\omega$, for all $\alpha<ht(T)=\kappa$).
\item $T$ has more than $2^\omega$  $\kappa$-branches.
\end{enumerate}
\end{lemma}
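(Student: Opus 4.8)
The plan is to establish each of the five assertions about $\kappa := \min\{\lambda : 2^\omega < 2^\lambda\}$ in turn, using only elementary cardinal arithmetic together with the definition of the tree $T = \{0,1\}^{<\kappa}$. For (1), I would argue by contradiction: if $\mathrm{cf}(\kappa) = \omega$, write $\kappa = \sup_n \kappa_n$ with each $\kappa_n < \kappa$, so by minimality of $\kappa$ we have $2^{\kappa_n} = 2^\omega$ for all $n$. Then $2^\kappa \leq (2^{\sup_n \kappa_n}) \leq \prod_n 2^{\kappa_n} = (2^\omega)^\omega = 2^\omega$, contradicting $2^\omega < 2^\kappa$; here the key input is the standard bound $2^{\sum_n \kappa_n} = \prod_n 2^{\kappa_n}$ for an increasing sequence. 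For (2), note $2^{2^\omega} = (2^\omega)^{2^\omega} \geq 2^{2^\omega}$ forces nothing directly, so instead observe that if $\kappa > 2^\omega$ then every $\lambda \leq 2^\omega$ satisfies $2^\lambda = 2^\omega$ by minimality of $\kappa$; but then taking $\lambda = 2^\omega$ gives $2^{2^\omega} = 2^\omega$, which is false by Cantor. Hence $\kappa \leq 2^\omega$.

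For the tree statements, (3) is immediate: $ht(T) = ht(\{0,1\}^{<\kappa}) = \kappa$ since $Lev_\alpha(T) = \{0,1\}^\alpha \neq \emptyset$ exactly for $\alpha < \kappa$, so $\min\{\alpha : Lev_\alpha(T) = \emptyset\} = \kappa$. For (4), $|T| = \sum_{\alpha < \kappa} 2^{|\alpha|}$; each term is $\leq 2^\omega$ by minimality of $\kappa$ (since $|\alpha| < \kappa$ for $\alpha < \kappa$, using that $\kappa$ is a cardinal), and the sum has $\leq \kappa \leq 2^\omega$ terms by (2), giving $|T| \leq (2^\omega)\cdot(2^\omega) = 2^\omega$; the reverse inequality follows since $Lev_\omega(T) = \{0,1\}^\omega$ already has cardinality $2^\omega$ (here $\omega < \kappa$, as $2^\omega < 2^\kappa$ forces $\kappa > \omega$). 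For (5), the $\kappa$-branches of $T$ are exactly the elements of $\{0,1\}^\kappa$ under the identification $f \mapsto \{f|\alpha : \alpha < \kappa\}$, which is a maximal linearly ordered set of height $\kappa$; thus $|Br_\kappa(T)| = 2^\kappa > 2^\omega$ by the defining property of $\kappa$.

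I expect (1) to be the main obstacle, in the sense that it is the only step requiring genuine cardinal arithmetic rather than a direct appeal to minimality — one must be careful that the cofinal sequence witnessing $\mathrm{cf}(\kappa) = \omega$ can be taken to consist of cardinals below $\kappa$, and that the inequality $2^\kappa \leq \prod_{n<\omega} 2^{\kappa_n}$ is applied correctly (it follows from $\kappa \leq \sum_n \kappa_n$ and monotonicity of $\lambda \mapsto 2^\lambda$, together with $\prod_n \mu = \mu^\omega$ for $\mu = 2^\omega$). Everything else reduces to two observations used repeatedly: that $|\alpha| < \kappa$ whenever $\alpha < \kappa$ (as $\kappa$ is a cardinal), so $2^{|\alpha|} = 2^\omega$ by minimality; and that $\kappa > \omega$, which holds since $2^\omega = 2^\omega$ rules out $\kappa \leq \omega$.
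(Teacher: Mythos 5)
Your proposal is correct and follows essentially the same route as the paper: (1) via the identity $2^{\kappa}\leq\prod_n 2^{\kappa_n}=(2^\omega)^\omega=2^\omega$ under a hypothetical countable cofinal sequence (the paper realizes this bound by the explicit injection $X\mapsto(X\cap\lambda_n)_n$), (2) from $2^{2^\omega}>2^\omega$ and minimality, and (3)--(5) by the same level-counting and branch-counting observations. No gaps; the only difference is that you spell out details the paper leaves as ``clear.''
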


\begin{proof}\
\begin{enumerate}
\item Suppose $\kappa=\sup_{n\in \N}\lambda_n$ for some $\lambda_n<\kappa$ and $n\in \N$. It follows that there is an injection of $2^\kappa$ into $\Pi_{n\in \N} 2^{\lambda_n}$ (just take $(X\cap \lambda_n)_{n\in \N}$ as an element associated to $X\subseteq \kappa$).  By the minimality of $\kappa$ we have that $2^{\lambda_n}\leq 2^\omega$.
So we have
\[2^\kappa\leq (2^\omega)^\omega=2^\omega,\]
which contradicts the property that $2^\kappa>2^\omega$.

%\item This is a well known corollary of Konig's lemma, even for arbitrary $\kappa$ (5.10, 5.12 of \cite{jech}).

\item Immediate from $2^{2^\omega}>2^\omega$ and the definition of $\kappa$.
\item is clear.
\item follows from (2), (3), the fact that all levels of $T$ are not bigger than $2^\omega$ and $|Lev_\omega(T)|=2^\omega$.
\item follows from the fact that $2^\kappa>2^\omega$.
\qedhere
\end{enumerate}
\end{proof}

\section{\texorpdfstring{$\ll$-units consisting of projections}{<<-units consisting of projections}}

Projections have long played an important role in operator algebra theory and there are various properties quantifying the amount of projections a C*-algebra possesses (see \cite{Blackadar1994}).  Here we briefly examine two such properties, namely the almost unital (AU) and locally unital (LU) C*-algebras in Definition \ref{af-lf}.  These generalize the approximately finite dimensional (AF) and locally finite dimensional (LF)
C*-algebras in the same definition. First we characterize LU algebras.

\begin{proposition}\label{LUequivs} For a C*-algebra $A$
the following are equivalent:
\begin{enumerate}
\item\label{LU} $A$ is LU.
\item\label{auproj} $A$ has an approximate unit consisting of projections.
\item\label{dense} $A=\overline{\bigcup\mathcal{F}}$ for the family $\mathcal{F}$ of all unital C*-subalgebras of $A$.
\item\label{dcof} $\mathcal{P}(A)$ is $\ll$-approximately cofinal in $A^1_+$.
\item\label{<cof} $\mathcal{P}(A)$ is $\ll$-cofinal in $A^1_\mathrm{c}$.
\end{enumerate}
\end{proposition}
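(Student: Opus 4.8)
The plan is to prove the cycle of implications $(\ref{LU})\Rightarrow(\ref{dense})\Rightarrow(\ref{auproj})\Rightarrow(\ref{dcof})\Rightarrow(\ref{<cof})\Rightarrow(\ref{LU})$, together with a direct passage between $(\ref{auproj})$ and $(\ref{dcof})$ using the material of Section 2.1. The implications among $(\ref{LU})$, $(\ref{dense})$, and $(\ref{auproj})$ are essentially formal. For $(\ref{LU})\Rightarrow(\ref{dense})$, given $a\in A$ and $\varepsilon>0$, LU gives a unital C*-subalgebra $B$ with $a\in_\varepsilon B$, so $\bigcup\mathcal F$ is dense; conversely $(\ref{dense})\Rightarrow(\ref{LU})$ is immediate since any finite $F$ can be approximated inside a single member of the directed-by-inclusion... here we must be slightly careful: $\bigcup\mathcal F$ need not be directed, so to approximate a finite set $F$ we approximate each element separately, but LU asks for one $B$ containing approximations of all of $F$. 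The fix is to approximate $F$ element-by-element first, then use that a finite set of unital C*-subalgebras generates a C*-subalgebra, which need not be unital; so instead I would go $(\ref{dense})\Rightarrow(\ref{auproj})$ directly: if $B$ is a unital C*-subalgebra with $a\in_\varepsilon B$, then $1_B$ is a projection in $A$ and $\|a-a1_B\|=\|a-ab+ab-a1_Bb\|$ ... cleaner is to note $\|a^\perp a1_B a^\perp\| $ small; more simply, pick $b\in B$ with $\|a-b\|<\varepsilon$, then $b1_B=b$, so $\|a-a1_B\|\le\|a-b\|+\|b-b1_B\|+\|b1_B-a1_B\|<2\varepsilon$. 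Collecting such projections over all $a\in A^1_+$ and $\varepsilon>0$ gives a subset of $\mathcal P(A)$ that is $\ll$-approximately cofinal in $A^1_+$, which by Proposition \ref{dcof=au} underlies an approximate unit consisting of projections; this simultaneously yields $(\ref{auproj})$ and $(\ref{dcof})$.

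For $(\ref{auproj})\Leftrightarrow(\ref{dcof})$: an approximate unit consisting of projections has underlying set $\subseteq\mathcal P(A)$ which is $\ll$-approximately cofinal in $A^1_+$ by Proposition \ref{dcof=au}, giving $(\ref{auproj})\Rightarrow(\ref{dcof})$; conversely if $\mathcal P(A)$ is $\ll$-approximately cofinal in $A^1_+$ then by Proposition \ref{dcof=au} it is the underlying set of an approximate unit, which then consists of projections, giving $(\ref{dcof})\Rightarrow(\ref{auproj})$. For $(\ref{dcof})\Rightarrow(\ref{<cof})$: given $a\in A^1_\mathrm{c}$, by \eqref{f} we may write $a=f(c)$ for $c\in A_+$ and $f\in C_0((0,\infty))_\mathrm{c}$, so there is $g\in C_0((0,\infty))$ with $f=fg$, $0\le g\le 1$; set $a'=g(c)\in A^1_+$. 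Using $\ll$-approximate cofinality of $\mathcal P(A)$ in $A^1_+$ pick a projection $r$ with $\|a'-a'r\|$ as small as we like. The point is that once $a'r$ is close to $a'$, functional calculus forces $ar$ to actually equal $a$: indeed $a=f(c)=f(c)g(c)=a a'$, and $\|a-ar\|=\|a(a'-a'r) \|\le\|a'-a'r\|$ can be made smaller than any prescribed $\delta$; but we need exact equality $a\ll r$. Here I would invoke that for $a\in A_\mathrm{c}$ the set $\{r\in\mathcal P(A):a\ll r\}$ is `open' in the sense that $a\ll_\delta r$ with $\delta$ small enough and $r$ a projection already forces $a\ll r$ — this is a standard consequence of the spectral gap at $1$ for projections (if $r$ is a projection and $\|a-ar\|<1$, functional calculus on the hereditary subalgebra... ). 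Actually the correct route is: since $a\in A^1_\mathrm{c}$, $a\ll b$ for some $b\in A^1_+$; apply $(\ref{dcof})$ to $b$ to get a projection $r$ with $b\ll_\delta r$, so $\|a-ar\|=\|a(b-br)+ (ab-a)r\| $; with $a=ab$ this is $\|a(b-br)\|\le\|b-br\|<\delta$; then use that $b\ll_\delta r$ for small $\delta$ implies $b\ll r'$ for a nearby projection $r'\ge$ something — this is where \cite[II.3.3.5]{Blackadar2017}-type perturbation of projections enters. I expect this to be the main technical obstacle: upgrading the approximate relation $a\ll_\delta r$ to the exact relation $a\ll r'$ for a genuine projection $r'$ in $\mathcal P(A)$.

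Finally $(\ref{<cof})\Rightarrow(\ref{LU})$: given a finite $F\subseteq A$ and $\varepsilon>0$, choose $a\in A^1_\mathrm{c}$ with $F\subseteq_\varepsilon aAa$ (possible since $\mathrm{Ped}(A)$ is dense, so each $f\in F$ is $\varepsilon$-close to an element of the dense ideal, which lies in some $aAa$ with $a\in A_\mathrm{c}^1$; taking a common such $a$ by directedness of $A_\mathrm{c}$ under $\ll$... if $A_\mathrm{c}$ is not $\ll$-directed we instead use finitely many $a_i$ and their sum, again compressing into $A_\mathrm{c}$). By $(\ref{<cof})$ pick $r\in\mathcal P(A)$ with $a\ll r$; then $rAr$ is a unital C*-subalgebra of $A$ with unit $r$, and $aAa\subseteq rAr$ since $a=ar=ra$, so $F\subseteq_\varepsilon rAr$, proving $A$ is LU. The only delicate points are the density/directedness bookkeeping around $A_\mathrm{c}$ and $\mathrm{Ped}(A)$, and the perturbation argument flagged above; everything else is routine manipulation with the triangle inequality of Proposition \ref{triangle}.
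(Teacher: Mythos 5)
Your handling of the easy implications among \eqref{LU}, \eqref{auproj}, \eqref{dense} and \eqref{dcof} is correct, and your fix for \eqref{dense}$\Rightarrow$\eqref{auproj} via $\|a-a1_B\|\le\|a-b\|+\|b-b1_B\|+\|(b-a)1_B\|$ together with Proposition \ref{dcof=au} matches the paper's route. The genuine gap is \eqref{dcof}$\Rightarrow$\eqref{<cof}, which you explicitly leave unresolved as ``the main technical obstacle.'' Moreover, the first repair you float is false: for a projection $r$, $\|a-ar\|<\delta$ with $\delta$ small does \emph{not} force $ar=a$ (take $a=P_0$ and $r=P_\theta$ in $M_2$ for small $\theta$ --- the very projections used in Section 2.2). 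Your second repair (replacing $r$ by a nearby projection $r'$ with $a\ll r'$) can be made to work via Corollary \ref{<<d<<}: from $a\ll b\ll_\delta r\ll r$ one gets a unitary $u$ with $u^*au\ll r$, hence $a\ll uru^*\in\mathcal{P}(A)$; but that corollary is the main technical result of Section 4 and is neither available nor needed here. The paper's argument is elementary and direct: choose $p\in\mathcal{P}(A)$ with $\|\sqrt{b}p^\perp\|<1$, so $\|p^\perp bp^\perp\|<1$ and the spectrum of $\sqrt{b^\perp}p^\perp\sqrt{b^\perp}$ is bounded away from $0$ off $0$ itself; this element is therefore well-supported with support projection $r\in\widetilde{A}$, and since $ab^\perp=0$ one gets $ar=0$, whence $q=r^\perp$ is a projection, checked to lie in $A$ rather than just $\widetilde{A}$, with $a\ll q$. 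You would need to supply this or an equivalent construction for the proof to stand.

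A secondary problem is your closing implication \eqref{<cof}$\Rightarrow$\eqref{LU}: finding a single $a\in A_{\mathrm{c}}^1$ with $F\subseteq_\varepsilon aAa$ runs into exactly the failure of $\ll$-directedness of $A_{\mathrm{c}}$ that this paper is built around (Lemma \ref{no-way-above}), and summing the witnesses takes you out of $A_{\mathrm{c}}$, so the ``bookkeeping'' you defer is not routine. This is easily avoided by closing the cycle with \eqref{<cof}$\Rightarrow$\eqref{dcof} instead, as the paper does: given $a\in A^1_+$ and $\varepsilon>0$, put $b=(a-\varepsilon)_+\in A^1_{\mathrm{c}}$, take $p\in\mathcal{P}(A)$ with $b\ll p$, and estimate $\|ap^\perp\|\le\|a-b\|+\|bp^\perp\|\le\varepsilon$.
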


\begin{proof}\
\begin{itemize}
\item[\eqref{LU}$\Rightarrow$\eqref{auproj}]  If $A$ is LU then, for each finite $F\subseteq A$ and $\varepsilon>0$, we have $F\subseteq_\varepsilon B$ for some C*-subalgebra $B$ of $A$ with a unit $p_{F,\varepsilon}$.  Thus, for each $f\in F$,
\[\|fp_{F,\varepsilon}^\perp\|\leq\inf_{b\in B}(\|f-b\|\|p_{F,\varepsilon}^\perp\|+\|bp_{F,\varepsilon}^\perp\|)\leq\inf_{b\in B}\|f-b\|<\varepsilon.\]
So $(p_{F,\varepsilon})_{(F,\varepsilon)\in \Lambda}$ is an approximate unit, where $\prec$ on $\Lambda=[A]^{<\omega}\times(0,1)$ is given by
\[(F,\varepsilon)\prec(G,\delta)\quad\hbox{if and only if}\quad F\subseteq G\text{ and }\delta<\varepsilon.\]

\item[\eqref{auproj}$\Rightarrow$\eqref{LU}]  If  $(\Lambda, \prec)$ is a directed set and $A$ has an approximate unit 
$(p_\lambda)_{\lambda\in\Lambda}$ 
consisting of
projections,  then for each finite $F\subseteq A$ and $\varepsilon>0$, we have $\lambda$ with $\|f-fp_\lambda\|<\frac{1}{2}\varepsilon$ and $\|f-p_\lambda f\|=\|f^*-f^*p_\lambda\|<\frac{1}{2}\varepsilon$, for all $f\in F$.  Thus $\|f-p_\lambda fp_\lambda\|<\varepsilon$, for all $f\in F$, i.e. $F\subseteq_\varepsilon p_\lambda Ap_\lambda$.

\item[\eqref{auproj}$\Rightarrow$\eqref{dense}]  Immediate.

\item[\eqref{dense}$\Rightarrow$\eqref{dcof}]  Assuming \eqref{dense}, for each $a\in A^1_+$ and $\varepsilon>0$, we have $b$ in a C*-subalgebra $B$ of $A$ with a unit $p$ such that $\|a-b\|<\varepsilon$.  Thus $\|ap^\perp\|\leq\|a-b\|+\|bp^\perp\|<\varepsilon$.

\item[\eqref{dcof}$\Rightarrow$\eqref{auproj}]  See Proposition \ref{dcof=au}.

\item[\eqref{dcof}$\Rightarrow$\eqref{<cof}]  We argue as in \cite[V.3.2.17]{Blackadar2017}.  Specifically, take any $a\in A^1_\mathrm{c}$, so we have $b\in A^1_+$ with $a\ll b$.  Assuming $\mathcal{P}(A)$ is $\ll$-approximately cofinal in $A^1_+$, we have $p\in\mathcal{P}(A)$ with $\|\sqrt{b}p^\perp\|<1$ and hence $\|p^\perp bp^\perp\|<1$.  Thus the spectrum $S$ of $p^\perp b^\perp p^\perp$ and hence of $\sqrt{b^\perp}p^\perp\sqrt{b^\perp}$ is bounded away from $0$, i.e. $\sqrt{b^\perp}p^\perp\sqrt{b^\perp}$ is well-supported with support/range projection $r=f(\sqrt{b^\perp}p^\perp\sqrt{b^\perp})\in\widetilde{A}$, where $f(0)=0$ and $f(s)=1$ for all other $s\in S$.  As $ab^\perp=0$, $ar=0$ so $a\ll q$ for $q=r^\perp$.  Moreover, as $\pi(p^\perp)=\pi(w^\perp)=1$, we have $\pi(r)=1$ and hence $\pi(q)=0$, where $\pi$ is the unique homomorphism from $\widetilde{A}$ onto $\mathbb{C}$, i.e. $q\in A$.

\item[\eqref{<cof}$\Rightarrow$\eqref{dcof}]  For any $a\in A^1_+$ and $\varepsilon>0$, we have $b=(a-\varepsilon)_+\in A^1_\mathrm{c}$.  So if $\mathcal{P}(A)$ is $\ll$-cofinal in $A^1_\mathrm{c}$ then we have $p\in\mathcal{P}(A)$ with $b\ll p$ and hence $\|ap^\perp\|\leq\|a-b\|+\|bp^\perp\|\leq\varepsilon$, i.e. $\mathcal{P}(A)$ is $\ll$-approximately cofinal in $A^1_+$.\qedhere
\end{itemize}
\end{proof}

As in \eqref{auproj} above, we can prove the following

\begin{proposition}\label{au=ll-unit-of-projections} Let $A$ be a C*-algebra. 
$A$  is AU if and only if $A$ has a $\ll$-unit consisting of projections.
\end{proposition}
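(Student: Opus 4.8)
The plan is to mimic the pattern of the implication \eqref{LU}$\Rightarrow$\eqref{auproj} (or rather its definition-level analogue) but tracking the extra $\ll$-increasing structure that the directed family of unital subalgebras provides. Recall $A$ is AU means there is a $\prec$-directed family $\{B_\lambda:\lambda\in\Lambda\}$ of unital C*-subalgebras with $\overline{\bigcup_\lambda B_\lambda}=A$, where $\lambda\prec\mu$ gives $B_\lambda\subseteq B_\mu$; let $p_\lambda=1_{B_\lambda}$ be the unit of $B_\lambda$, a projection in $A$ by the conventions fixed in the introduction.

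\emph{Forward direction.} First I would show $\{p_\lambda:\lambda\in\Lambda\}$ is a $\ll$-unit consisting of projections. That it is $\ll$-directed is the crucial point: if $\lambda\prec\mu$ then $B_\lambda\subseteq B_\mu$, and since $p_\lambda\in B_\lambda\subseteq B_\mu$ and $p_\mu=1_{B_\mu}$ is a unit for $B_\mu$, we get $p_\lambda p_\mu=p_\lambda$, i.e. $p_\lambda\ll p_\mu$; directedness of $\Lambda$ then gives directedness of the family under $\ll$. That it is $\ll$-approximately cofinal in $A^1_+$ follows exactly as in the proof of \eqref{dense}$\Rightarrow$\eqref{dcof}: given $a\in A^1_+$ and $\varepsilon>0$, density of $\bigcup_\lambda B_\lambda$ yields $\lambda$ and $b\in B_\lambda$ with $\|a-b\|<\varepsilon$, and since $p_\lambda b=b$ we get $\|ap_\lambda^\perp\|=\|(a-b)p_\lambda^\perp+bp_\lambda^\perp\|\leq\|a-b\|<\varepsilon$, i.e. $a\ll_\varepsilon p_\lambda$. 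By Definition \ref{def-waybelow-unit} this is a $\ll$-unit.

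\emph{Converse direction.} Suppose $A$ has a $\ll$-unit $U\subseteq\mathcal{P}(A)$. By Proposition \ref{cf+dir=inc}, arrange $U$ as a $\ll$-increasing net $(p_\lambda)_{\lambda\in\Lambda}$ (indeed, order $U$ itself by $\ll$, which is directed since $U$ is $\ll$-directed, and is a strict partial order on the projections of $U$). For each $\lambda$ put $B_\lambda=p_\lambda A p_\lambda$, a C*-subalgebra of $A$ with its own unit $p_\lambda$. If $\lambda\prec\mu$ then $p_\lambda\ll p_\mu$, and since all these are projections $p_\lambda=p_\lambda p_\mu=p_\mu p_\lambda$, so for any $a\in A$ we have $p_\lambda a p_\lambda=p_\lambda(p_\mu a p_\mu)p_\lambda$, giving $B_\lambda\subseteq B_\mu$; hence $\{B_\lambda:\lambda\in\Lambda\}$ is directed. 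For density, given $a\in A^1_+$ and $\varepsilon>0$, $\ll$-approximate cofinality yields $\lambda$ with $\|a-ap_\lambda\|<\varepsilon/2$, and taking $\mu\succ\lambda$ if necessary and using the self-adjoint estimate $\|a-p_\lambda a p_\lambda\|\leq\|a-ap_\lambda\|+\|p_\lambda\|\|a-p_\lambda a\|=\|a-ap_\lambda\|+\|a^*-a^*p_\lambda\|<\varepsilon$ (exactly the computation in \eqref{auproj}$\Rightarrow$\eqref{LU}), we get $a\in_\varepsilon B_\lambda$; linear combinations handle general $a\in A$. Thus $\overline{\bigcup_\lambda B_\lambda}=A$ and $A$ is AU.

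\emph{Main obstacle.} There is no serious obstacle here; the statement is essentially a repackaging of Proposition \ref{LUequivs} together with the definitions, and the only point requiring care is the bookkeeping in the converse to pass from the $\ll$-unit $U$ (which need not be all of $\mathcal P(A)$ and need not consist of units of a preassigned directed family) to an honest directed family of unital subalgebras — handled by taking the hereditary subalgebras $p_\lambda A p_\lambda$ and using that $\ll$ among projections forces the subalgebra containment $B_\lambda\subseteq B_\mu$.
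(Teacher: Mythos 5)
Your proof is correct and follows essentially the same route the paper intends: the paper's proof of this proposition is literally the remark ``as in \eqref{auproj} above,'' i.e.\ adapt the arguments of Proposition \ref{LUequivs} while keeping track of the $\ll$-increasing structure, which is exactly what you do (units of a nested directed family of unital subalgebras are $\ll$-increasing; conversely the hereditary subalgebras $p_\lambda Ap_\lambda$ over a $\ll$-unit of projections form a directed family with dense union). The only point worth tightening is the final ``linear combinations handle general $a$'': one should note that $\ll$-directedness of $U$ together with Proposition \ref{triangle} lets you pick a single $p_\lambda$ working simultaneously for the finitely many positive pieces of $a$.
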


In fact, even a general $\ll$-unit makes an LU algebra an AU algebra.

\begin{theorem}\label{au=lu+waybelow}
Let $A$ be a C*-algebra. 
$A$  is AU if and only if $A$ is LU and $\ll$-unital.
\end{theorem}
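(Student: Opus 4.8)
The plan is to prove the nontrivial direction: assuming $A$ is LU and $\ll$-unital, produce a $\ll$-unit consisting of projections, so that $A$ is AU by Proposition \ref{au=ll-unit-of-projections}. (The converse is immediate since a $\ll$-unit consisting of projections is in particular a $\ll$-unit, so $A$ is $\ll$-unital, and it witnesses LU via Proposition \ref{LUequivs}\eqref{auproj} together with the fact that any approximate unit is $\ll$-approximately cofinal.)

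For the forward direction, fix a $\ll$-unit $U\subseteq A^1_+$ for $A$; by Proposition \ref{LUequivs}, $\mathcal{P}(A)$ is $\ll$-cofinal in $A^1_\mathrm{c}$, and by the remark following Definition \ref{def-waybelow-unit} we may (and do) assume $U\subseteq A^1_\mathrm{c}$. The strategy is to replace each $u\in U$ by a projection lying $\ll$-above it, using $\ll$-cofinality of $\mathcal{P}(A)$ in $A^1_\mathrm{c}$, and then to show the resulting set of projections is still $\ll$-directed. Concretely: for each $u\in U$ choose $p_u\in\mathcal{P}(A)$ with $u\ll p_u$, and set $P=\{p_u:u\in U\}$. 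That $P$ is $\ll$-approximately cofinal in $A^1_+$ is easy, since for any $a\in A^1_+$ and $\varepsilon>0$ we have $u\in U$ with $a\ll_\varepsilon u$, and then $a\ll_\varepsilon u\ll p_u$ gives $a\ll_\varepsilon p_u$ by Proposition \ref{triangle} (the second term contributes $0$).

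The main obstacle is $\ll$-directedness of $P$: given $p_u,p_v\in P$, I need a single projection in $P$ that is $\ll$-above both. Since $U$ is $\ll$-directed there is $w\in U$ with $u,v\ll w$; but $p_u\ll p_w$ is not automatic from $u\ll w$. The key point to push through is a transitivity-type lemma for $\ll$ through projections: if $u\ll p_u$ and $u\ll w\ll p_w$ with $p_u,p_w$ projections, one wants to enlarge the bookkeeping so that the chosen projection above $w$ also dominates $p_u$. The cleanest fix is to build $P$ not as a bare choice function but as a directed refinement: index by finite subsets $G$ of $U$, for each $G$ pick (using $\ll$-directedness of $U$) some $w_G\in U$ with $g\ll w_G$ for all $g\in G$, and pick $q_G\in\mathcal{P}(A)$ with $w_G\ll q_G$; then take a common upper bound by passing to $G\cup G'$. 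To make $q_G\ll q_{G'}$ hold whenever $G\subseteq G'$ one replaces the single projection by a $\ll$-increasing sequence of projections above each $w_G$ — here one invokes that $A$ is $\sigma$-unital on the hereditary subalgebra generated by $w_G$ (it is, being generated by one element), so that hereditary subalgebra is $\ll$-unital by Proposition \ref{sigma-unital} and, being moreover LU there, has a $\ll$-increasing approximate unit of projections dominating $w_G$, which can be threaded through the finite-subset index to yield the required $\ll$-increasing family. The resulting collection of projections is $\ll$-approximately cofinal (as above) and $\ll$-directed by construction, hence a $\ll$-unit consisting of projections, so $A$ is AU. I expect the bookkeeping in this threading argument — arranging a genuinely $\ll$-increasing family across the directed index $[U]^{<\omega}$ rather than just a $\ll$-cofinal one — to be the technical heart of the proof.
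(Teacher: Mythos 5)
Your reduction to producing a $\ll$-unit consisting of projections, the easy direction, and the $\ll$-approximate cofinality of your family of dominating projections are all fine, and you have correctly isolated the real difficulty, namely the $\ll$-directedness of the chosen projections. But the mechanism you propose for resolving it has a genuine gap. You want a $\ll$-increasing family of projections above each $w_G$ extracted from the hereditary subalgebra $\overline{w_GAw_G}$, asserting that this subalgebra is $\sigma$-unital (true) and ``moreover LU there''. The latter is false in general: LU does not pass to hereditary subalgebras. For instance $C([0,1])$ is unital, hence AU and LU, while its hereditary subalgebra $C_0((0,1))=\overline{aC([0,1])a}$ (for $a(t)=t(1-t)$) contains no nonzero projection at all. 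Even setting this aside, an approximate unit of projections of $\overline{w_GAw_G}$ would only approximately dominate $w_G$, whereas you need exact domination $w_G\ll q$; and the ``threading'' of these local sequences into a single family that is $\ll$-increasing over the non-linearly-ordered index set $[U]^{<\omega}$ is exactly the step you leave open. So the crux of the argument is missing.

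The paper closes this gap with a different idea: rather than trying to organize arbitrary dominating projections into a directed family by bookkeeping, it arranges that each chosen projection is sandwiched back into $U$. Given $u\in U$, take $p\in\mathcal{P}(A)$ with $u\ll p$ (via Proposition \ref{LUequivs}) and $x\in U$ with $u\ll x$ and $\|px^\perp\|<1$; then for $y=\sqrt{x^{\perp2\perp}}\in C^*(x)$ the element $ypy$ is well-supported, and its support projection $q$ satisfies $u\ll q$ and $q\ll z$ for any $z\in U$ with $x\ll z$. Thus every element of $P=\{p\in\mathcal{P}(A):p\ll u\in U\}$ lies $\ll$-below some member of $U$ while $P$ is $\ll$-cofinal in $U$, and directedness of $P$ then comes for free from directedness of $U$: given $q\ll z$ and $q'\ll z'$, pick $w\in U$ above $z,z'$ and then $q''\in P$ with $w\ll q''$. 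If you want to salvage your approach, this ``return to $U$'' step is the idea to add; no increasing chains indexed by $[U]^{<\omega}$ are needed.
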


\begin{proof}
The $\Rightarrow$ part is immediate.  Conversely, assume $A$ is LU and has a $\ll$-unit $U$.  We claim that
\[P=\{p\in\mathcal{P}(A):p\ll u\in U\}\]
is also a $\ll$-unit.  For this it suffices to show that $P$ is $\ll$-cofinal in $U$, i.e. for all $u\in U$, we have $p\in P$ with $u\ll p$.  By Proposition \ref{LUequivs}, we do at least have $p\in\mathcal{P}(A)$ with $u\ll p$.  As $U$ is a $\ll$-unit, we thus have $x\in U$ with $u\ll x$ and $\|px^\perp\|<1$ and hence $\|px^{\perp2}p\|<1$.  For $y=\sqrt{x^{\perp2\perp}}(\in C^*(x)\subseteq A)$, it follows that $py^2p$ and hence $ypy$ is well-supported with range/support projection $q$.  We have $z\in U$ with $x\ll z$ so $y\ll z$ and hence $q\ll z$, i.e. $q\in P$.  As $u\ll x,p$, we have $u\ll ypy$ and hence $u\ll q$, i.e. $P$ is indeed $\ll$-cofinal in $U$ and hence a $\ll$-unit.  Thus $A=\overline{\bigcup_{p\in P}pAp}$ is AU.
\end{proof}

Incidentally, while AU and LU algebras were not considered in \cite{Blackadar1994}, the dual property to LU was.  Specifically, in \cite{Blackadar1994}, $A$ was said to have \emph{property (SP)} (presumably for `Subalgebra Projections') if every non-zero hereditary C*-subalgebra contains a non-zero projection.  One can argue as in the proof of Proposition \ref{LUequivs} to obtain the following, 
where a subset $B$ of the unit sphere $A^{=1}_+$ is called \emph{$\ll$-approximately coinitial} if 
for every $a\in A^{=1}_+$ and $\varepsilon>0$  there is $b\in B$ with $b\ll_\varepsilon a$.

\begin{proposition}
For a C*-algebra $A$, the following are equivalent.
\begin{enumerate}
\item $A$ has property (SP).
\item $A^{=1}_+=\overline{\{a\in A^1_+:a\gg p\in\mathcal{P}(A)\setminus\{0\}\}}$.
\item $\mathcal{P}(A)$ is $\ll$-approximately coinitial in $A^{=1}_+$.
\item $\mathcal{P}(A)$ is $\ll$-coinitial in $\{a\in A^1_+:a\gg b\in A^1_+\setminus\{0\}\}$.
\end{enumerate}
\end{proposition}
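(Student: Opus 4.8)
The plan is to mirror the proof of Proposition \ref{LUequivs}, but with the order relation $\ll$ read ``downwards'' instead of ``upwards''. The property (SP) condition is a localized statement about hereditary subalgebras, so the first task is to translate it into the language of $\ll$. Recall that a hereditary C*-subalgebra of $A$ has the form $\overline{aAa}$ for some $a\in A_+$, and that $\overline{aAa}$ contains a non-zero projection $p$ exactly when $p\in\mathcal{P}(A)$ satisfies $p\ll a$ (equivalently $pa=p$, which forces $p\leq a$ in the usual order and $p\in\overline{aAa}$). So (SP) should become: for every non-zero $a\in A_+$ there is a non-zero $p\in\mathcal{P}(A)$ with $p\ll a$, up to the usual reduction $a\mapsto(a-\varepsilon)_+$ that moves between $A^1_+$ and $A^1_\mathrm{c}$.

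First I would prove $(1)\Leftrightarrow(3)$. For $(1)\Rightarrow(3)$, given $a\in A^{=1}_+$ and $\varepsilon>0$, set $b=(a-\varepsilon/2)_+\in A_\mathrm{c}$, which is non-zero for $\varepsilon$ small; then $\overline{bAb}$ is a non-zero hereditary subalgebra, so by (SP) it contains a non-zero projection $p$. Since $p\in\overline{bAb}$ one has $p\ll g(b)$ for a suitable $g\in C_0((0,\infty))$ with $g\leq 1$ and $g\equiv 1$ on $\sigma(b)\setminus\{0\}$, and then a functional-calculus estimate of the type in Proposition \ref{triangle} (together with $\|a-g(b)\cdot\text{something}\|$ small) gives $p\ll_\varepsilon a$; the cleanest route is to observe $\|p-pa\|=\|p(1-a)\|\leq\|p\|\,\|(1-a)|_{\text{supp}(p)}\|$ and that $\text{supp}(p)\subseteq\overline{bAb}$ forces $a$ to act near $1$ there. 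For $(3)\Rightarrow(1)$, given a non-zero hereditary $B=\overline{aAa}$ with $a\in A^{=1}_+$, approximate cofinality in the $\ll$ sense hands us $p\in\mathcal{P}(A)$ with $p\ll_{1/2} a$, i.e. $\|p-pa\|<1/2$; then $pap$ is close to $p$, hence invertible in $pAp$, hence its range projection — which lies in $B$ since $pap\in aAa$ — is a non-zero projection in $B$, giving (SP). The equivalence $(3)\Leftrightarrow(4)$ is the $(a-\varepsilon)_+$ trick exactly as in $\eqref{dcof}\Leftrightarrow\eqref{<cof}$ of Proposition \ref{LUequivs}, reading coinitial for cofinal. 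Finally $(2)$ is just the reformulation of $(3)$ as a density statement: $(3)$ says the displayed set is dense in $A^{=1}_+$, and conversely density gives the approximate coinitiality.

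The main obstacle I expect is the functional-calculus bookkeeping in $(1)\Rightarrow(3)$: one has to pass from ``$p$ sits in the hereditary algebra cut out by $(a-\varepsilon)_+$'' to the quantitative statement ``$\|p-pa\|<\varepsilon$'', and the natural inequalities run the ``wrong'' way (support projections of cut-downs are only $\ll$-below the cut-down, not below $a$ itself). The fix is to choose the cut parameter carefully — work with $b=(a-\varepsilon)_+$, note $ab\geq b^2$ and $\|b-b^{1/2}\cdot ab^{-1/2}\|$-type bounds, or more simply use that any projection $p$ with $p\leq b/\|b\|$-ish and $p\in\overline{bAb}$ satisfies $pa=p\cdot a$ where $a$ restricted to $\overline{bAb}\cap pAp$ is within $\varepsilon$ of the identity — so $\|p-pa\|\leq\varepsilon$ follows from spectral considerations on $C^*(a)$. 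Everything else is a routine transcription of the Proposition \ref{LUequivs} argument with the order reversed, so the write-up should be short, with the one genuinely new estimate isolated as above.
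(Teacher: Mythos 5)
Your overall strategy -- transcribing the proof of Proposition \ref{LUequivs} with the order reversed -- is exactly what the paper intends (it offers no proof beyond that remark), but the step you yourself single out as ``the one genuinely new estimate'' is carried out with the cut-down at the wrong end of the spectrum, and this is a real error rather than bookkeeping. For $(1)\Rightarrow(3)$ you take $b=(a-\varepsilon/2)_+$ and extract a non-zero projection $p\in\overline{bAb}$ from (SP); but (SP) only hands you \emph{some} projection in that hereditary subalgebra, and on the support of $(a-\varepsilon/2)_+$ the element $a$ is merely bounded below by $\varepsilon/2$, not close to $1$. Concretely, in $A=\mathbb{C}^2$ with $a=(1,\varepsilon)$ your $\overline{bAb}$ is all of $A$ and $p=(0,1)$ gives $\|p-pa\|=1-\varepsilon$. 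The correct cut is at the \emph{top} of the spectrum: set $b=(a-(1-\varepsilon^2))_+$, which is non-zero precisely because $a$ lies in the unit sphere $A^{=1}_+$ (a hypothesis you never invoke -- a warning sign). Then every projection $p\in\overline{bAb}$ satisfies $p\leq s(b)$, the spectral projection of $a$ for $(1-\varepsilon^2,1]$, whence $pap\geq(1-\varepsilon^2)p$ and $\|p-pa\|^2=\|p(1-a)^2p\|\leq\|p(1-a)p\|\leq\varepsilon^2$. Relatedly, your opening claim that a projection $p\in\overline{aAa}$ automatically satisfies $pa=p$ is false (same example); only the converse implication $p\ll a\Rightarrow p=pap\in aAa$ holds.

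There is a second slip in $(3)\Rightarrow(1)$: $pap$ does not lie in $\overline{aAa}$ (it lies in $pAp$), and since $pap$ is invertible in $pAp$ its range projection is $p$ itself, which need not belong to $B$. Work instead with $apa=(pa)^*(pa)\in aAa\subseteq B$: from $\|p-pa\|<1/2$ one gets $pa^2p\geq(pap)^2\geq\tfrac{1}{4}p$, so $\sigma(apa)\setminus\{0\}=\sigma(pa^2p)\setminus\{0\}\subseteq[\tfrac{1}{4},1]$, and the range projection $f(apa)$ (with $f(0)=0$ and $f\equiv1$ on $[\tfrac{1}{4},1]$) is a non-zero projection in $C^*(apa)\subseteq\overline{aAa}\subseteq B$; alternatively use the partial isometry $v=a^{1/2}p(pap)^{-1/2}$. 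With these two repairs, and with $(2)\Leftrightarrow(3)$ argued through $(4)$ as in the cycle \eqref{dense}$\Rightarrow$\eqref{dcof}$\Rightarrow$\eqref{auproj}$\Rightarrow$\eqref{dense} of Proposition \ref{LUequivs} (your one-line ``reformulation'' hides the need to perturb $a$ to an element that is exactly $\gg$ some non-zero projection), the argument goes through.
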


\section{\texorpdfstring{$\omega_1$-unital C*-algebras}{omega\_1-unital C*-algebras}}

The goal of this section is to prove \autoref{main-omega1}.  The proof will rely on comparing $\mathbf{d}(a,b)=\|a-ab\|$ 
considered in Section 2.1 with another binary function $\mathbf{u}$ on $A$ which also is equal $0$
if and only if $a\ll b$. 
Specifically, let $\mathbf{u}$ measure how far away from $1$ we must choose a unitary in $\widetilde{A}$ if we want to have $u^*au\ll b$, i.e.
\[\mathbf{u}(a,b)=\inf\{\|1-u\|:u\in\widetilde{A},u^*u=1=uu^*,u^*au\ll b\}.\]
On $A^1_+$, $\mathbf{u}$ also satisfies the triangle inequality $\mathbf{u}(a,b)\leq\mathbf{u}(a,c)+\mathbf{u}(c,b)$ and
\[\mathbf{d}\leq2\mathbf{u}.\]
However, there is no way of conversely bounding $\mathbf{u}$ by any nice function of $\mathbf{d}$.  Indeed, if $A$ is commutative and $u$ is a unitary then $u^*au=a$ and hence $\mathbf{u}(a,b)=0$ if $a\ll b$ and $\infty=\inf\emptyset$ otherwise, even though $\mathbf{d}(a,b)$ can take arbitrarily small non-zero values.

To obtain the desired inequality, we need to consider a slightly bigger function than $\mathbf{d}$.  First consider the distance $\mathbf{h}(a,b)=||(a-b)_+||$ and define
\[a\leq_\delta b\qquad\Leftrightarrow\qquad\mathbf{h}(a,b)<\delta.\]
By \cite[Proposition 1.3]{bice-vignati}, $\frac{1}{2}\mathbf{h}\leq\mathbf{d}\leq\sqrt{\mathbf{d}\circ\mathbf{h}},\sqrt{\mathbf{h}\circ\mathbf{d}}$ so $\mathbf{d}$ has essentially the same magnitude as $\mathbf{d}\circ\mathbf{h}\circ\mathbf{d}$ near $0$.  Consider the bigger function $\ll\circ\ \mathbf{h}\ \circ\ll$ defined by
\[(\ll\circ\ \mathbf{h}\ \circ\ll)(a,b)=\inf\{\delta:c,d\in A^1_+,a\ll c\leq_\delta d\ll b\}.\]

\begin{theorem}\label{<<h<<}
For all $\varepsilon>0$ there is a $\delta>0$ such that, for all $a,b\in A^1_+$, if $(\ll\circ\ \mathbf{h}\ \circ\ll)(a,b)<\delta$, then $\mathbf{u}(a,b)<\varepsilon$.
\end{theorem}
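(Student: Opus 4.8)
The plan is to reduce Theorem \ref{<<h<<} to the classical statement \cite[II.3.3.5]{Blackadar2017} on unitary equivalence of close projections, by replacing the positive elements $a,b$ with suitable \emph{range/support projections}. Suppose $(\ll\circ\ \mathbf{h}\ \circ\ll)(a,b)<\delta$, so we have $c,d\in A^1_+$ with $a\ll c$, $\mathbf{h}(c,d)=\|(c-d)_+\|<\delta$, and $d\ll b$. The key point is that $a\ll c$ forces $a$ to be supported on the spectral subspace of $c$ where $c=1$; precisely, the range projection $p$ of $(c-t)_+$ for any fixed $t\in(0,1)$ — better, the projection $\chi_{\{1\}}(c)$ if it exists, or an approximation to it — satisfies $a\ll p$ in $\widetilde A$, since $a=ac$ implies $a(1-c)=0$. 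Symmetrically, choosing an appropriate spectral projection $q$ of $d$ near $1$, we get $q\ll b$, i.e. $q=qb$. Thus if $u$ is a unitary in $\widetilde A$ with $u^*pu$ close to (in fact below) $q$, then $u^*au = u^*au\,u^*pu$ (using $a\ll p$, so $a=ap$ hence $u^*au=u^*au\cdot u^*pu$ — careful: one wants $u^*au\ll q$, which follows from $u^*au\ll u^*pu$ and $u^*pu\leq q$ via $u^*pu\ll q$), and then $u^*au\ll b$. So it suffices to produce a unitary $u$ near $1$ with $u^*pu\ll q$, and for that we invoke the projection version of the result.

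The main technical issue is that the spectral projections $\chi_{\{1\}}(c)$ and $\chi_{\{1\}}(d)$ need not lie in $A$ (only in $A^{**}$), and the gap $\mathbf{h}(c,d)<\delta$ does not immediately bound $\|p-q\|$ for honest projections in $\widetilde A$. I would handle this by working with functional calculus: fix a continuous function $g\colon[0,1]\to[0,1]$ with $g\equiv 0$ on $[0,1-\eta]$, $g\equiv 1$ on $[1-\eta/2,1]$, for a small $\eta=\eta(\varepsilon)$ to be chosen, and set $p'=g(c)$, $q'=g(d)\in\widetilde A$ (in fact in $A$, being of the form $g$ with $g(0)=0$ applied to elements of $A^1_+$). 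These are not projections, but $a\ll c$ gives $a\ll p'$ (since $g\equiv 1$ near the part of the spectrum supporting $a$, as $a=ac$ implies $a = af(c)$ for any $f$ with $f(1)=1$ — here taking $f=g$ works once $\eta<1$), and similarly $q'\ll b$ needs $g$ with $g(1)=1$ applied to $d$ with $q'=q'b$... wait, $d\ll b$ means $d=db$, so $g(d)=g(d)b$ only if $g(d)d=g(d)$-type identities hold, which they don't in general. So instead I would use that $d\ll b$ to get $g(d)\ll b$ directly from the fact that $d=db$ implies $d b^\perp = 0$, hence $f(d)b^\perp=0$ for any $f$ with $f(0)=0$, giving $g(d)\ll b$ as desired.

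Now $\|p'-q'\|=\|g(c)-g(d)\|$, and since $\mathbf{h}(c,d)<\delta$ controls $(c-d)_+$, combined with the corresponding control in the other direction one extracts (using \cite[Proposition 1.3]{bice-vignati} or a direct functional-calculus continuity estimate, noting $g$ is Lipschitz) that $\|g(c)-g(d)\|$ is small, say $<\delta'$, where $\delta'\to 0$ as $\delta\to 0$. Here the non-symmetry of $\mathbf h$ is a genuine nuisance: $\mathbf h(c,d)<\delta$ alone only bounds one side; but $a\ll c$ and $d\ll b$ are themselves symmetric-in-spirit hypotheses and the display defining $\ll\circ\ \mathbf h\ \circ\ll$ only asks for the one-sided gap, so I would need to argue that, after passing to $g(c),g(d)$ and recalling both are $\leq 1$, the relevant quantity $\|g(c)-g(d)\|$ is still controlled — this can be done by choosing $c,d$ with the additional normalization $c\leq_\delta d$ and $\|c\|,\|d\|\leq 1$ and using that $g$ vanishes below $1-\eta$, so that $\|(g(c)-g(d))\|\leq \mathrm{Lip}(g)\cdot\|(c-d)_+\| + (\text{term from }d>c\text{ which only pushes }g(d)\text{ up, still}\leq 1)$; a short argument shows the downward part is harmless. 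Once $\|p'-q'\|<\delta'$, I invoke \cite[II.3.3.5]{Blackadar2017}: for the range projections $p$ of $p'$ and $q$ of $q'$ (well-supported since $g$ is bounded away from $0$ on its support, so $p,q\in\widetilde A$ are honest projections) one checks $\|p-q\|$ is small, and then there is a unitary $u\in\widetilde A$ with $\|1-u\|<\varepsilon$ and $u^*pu=q$. Finally $a\ll p'\ll p$ gives $a\ll p$, hence $u^*au\ll u^*pu=q\ll q'\ll b$, so $u^*au\ll b$ and $\mathbf u(a,b)<\varepsilon$. The hard part, as indicated, is the bookkeeping that turns the one-sided gap $\mathbf h(c,d)<\delta$ into a two-sided norm bound $\|g(c)-g(d)\|<\delta'$ after the cutoff — everything else is assembling standard functional calculus and the classical projection lemma.
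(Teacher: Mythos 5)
There is a genuine gap: the reduction to the classical projection lemma \cite[II.3.3.5]{Blackadar2017} via range projections of the cutoffs $g(c),g(d)$ does not go through, for three separate reasons. First, the claim that the range projections $p$ of $g(c)$ and $q$ of $g(d)$ are ``honest projections in $\widetilde A$'' is unjustified: although $g$ vanishes off $[1-\eta,1]$, its \emph{range} is all of $[0,1]$, so $\sigma(g(c))$ can accumulate at $0$ and $g(c)$ need not be well-supported; in general these range projections live only in $A^{**}$, where the classical lemma would at best produce a unitary in $A^{**}$ rather than in $\widetilde A$, which is useless for bounding $\mathbf{u}(a,b)$. Second, the two-sided estimate $\|g(c)-g(d)\|<\delta'$ that you need in order to invoke \cite[II.3.3.5]{Blackadar2017} is simply false under the hypothesis $\|(c-d)_+\|<\delta$: take $c=0$ and $d=1$, so that $\mathbf h(c,d)=0$ while $\|g(c)-g(d)\|=1$. (The conclusion you want is one-sided, $u^*pu\ll q$, but the lemma you cite delivers unitary \emph{equivalence} of \emph{close} projections, and closeness is exactly what the one-sided hypothesis cannot provide.) Third, even granting a bound on $\|g(c)-g(d)\|$, passing to range projections is not a norm-continuous operation, so ``one checks $\|p-q\|$ is small'' does not follow; two positive elements can be $\varepsilon$-close with range projections at distance $1$.

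These are precisely the obstructions the paper's proof is designed to circumvent, and the missing idea is the non-commutative Riesz decomposition \cite[Proposition 1.4.10]{pedersen}. The paper uses the $A^{**}$ support projection $p$ of the cutoff $f=F(c)$ only as an auxiliary device to derive an inequality between honest elements of $A$, namely $f\leq(1-\sqrt\delta)^{-1}\sqrt f\,d\,\sqrt f$; the Riesz decomposition then produces $v\in A$ with $vv^*=f$ and $v^*v\lesssim\sqrt d f\sqrt d\ll b$, and a norm estimate shows $v$ is close to $\sqrt f$. The unitary is then built directly by completing $v$ to an invertible element $w=v+\sqrt{1-\sqrt{vv^*}}\sqrt{1-\sqrt{v^*v}}$ close to $1$ and taking its unitary part, after which the spectral projections of $vv^*$ and $v^*v$ at $1$ (again only in $A^{**}$, but intertwined by $u$) yield $u^*au\ll b$. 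Without some substitute for this step --- a way of producing an element of $A$ or $\widetilde A$, not merely of $A^{**}$, that witnesses the approximate domination --- your outline cannot be completed.
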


\begin{proof}
Take $a,b,c,d\in A^1_+$ with $a\ll c\leq_\delta d\ll b$, for small $\delta>0$.  We will show that $a\ll_{O(\delta)}b$ for some function $O$ on $\mathbb{R}_+$ with $\lim_{r\rightarrow0}O(r)=0$.

First, by applying the continuous functional calculus, we replace $c$ with an element $f$ which has `smaller support'.  More precisely, take positive
\begin{equation}\label{delta'}
\delta'<\delta-\|(c-d)_+\|.
\end{equation}
and let $f=F(c)$, where $F:[0,1]\rightarrow[0,1]$ is defined by
\[F(r)=\begin{cases}0&\text{if }r\in[0,1-\delta']\\ (r-1+\delta')/\delta'&\text{if }r\in[1-\delta',1].\end{cases}\]
As $F(1)=1$ and $a\ll c$, we have $a\ll f$.  Also $f\leq c\leq_\delta d$ so $f\leq_\delta d$ and hence
\[a\ll f\leq_\delta d\ll b.\]

The smaller support of $f$ allows us to strengthen $f\leq_\delta d$ in several ways.  First, define $p\in A^{**}$ as the weak* limit of $f^{1/n}$, so $p$ is the support projection of $f$ in $A^{**}$.  As $||f^{1/n}-f^{1/n}c||=||F^{1/n}(c)-F^{1/n}(c)c||\leq\delta'$, for all $n$,
\begin{equation}\label{p-pc}
\|p-pc\|\leq\delta'.
\end{equation}
(alternatively, note $p$ is also the spectral projection of $c$ corresponding to the interval $[1-\delta',1]$).  By \cite[Proposition 1.3]{bice-vignati}, \eqref{delta'} and \eqref{p-pc},
\begin{equation}\label{||p-pd||}
\|p-pd\|^2\leq\|p-pc\|+\|(c-d)_+\|\leq\delta'+\|(c-d)_+\|<\delta.
\end{equation}
Letting $e=(1+\sqrt{d})^{-1}\in\widetilde{A}^1_+$, we have $(1-d)e=1-\sqrt{d}$ so \eqref{||p-pd||} yields
\begin{equation}\label{rootfrootd}
\|\sqrt{f}-\sqrt{d}\sqrt{f}\|=\|\sqrt{f}(1-\sqrt{d})\|=\|\sqrt{f}p(1-d)e\|\leq\|p(1-d)\|\leq\sqrt{\delta}.
\end{equation}
Again \eqref{||p-pd||} yields $\|p-pdp\|=\|p(1-d)p\|\leq\|p(1-d)\|<\sqrt{\delta}$.  As $pdp\ll p$, the C*-algebra they generate is isomorphic to $C(X)$ for some compact $X$ and we can consider $p$ as the constant function with value $1$ and $pdp$ as a function taking values $\geq1-\sqrt{\delta}$.  From this we see that $p\leq(1-\sqrt{\delta})^{-1}pdp$.  Multiplying by $\sqrt{f}$ on the left and right yields
\[f\leq(1-\sqrt{\delta})^{-1}\sqrt{f}d\sqrt{f}.\]

This allows us to apply the non-commutative Riesz decomposition from \cite[Proposition 1.4.10]{pedersen} with $x=\sqrt{f}$, $y=0$ and $z=\sqrt{1-\sqrt{\delta}}^{-1}\sqrt{f}\sqrt{d}$, obtaining $v\in A$ satisfying
\begin{equation}\label{vdef}
f=vv^*\qquad\text{and}\qquad v^*v\leq(1-\sqrt{\delta})^{-1}\sqrt{d}f\sqrt{d}.
\end{equation}
More explicitly, $v$ is the norm limit of $v_n=\sqrt{f}\sqrt{zz^*+1/n}^{-1}z$.\\
\textbf{Claim:} This implies that $v$ is close to $\sqrt{f}$.  For this first note that, for all $n\in\mathbb{N}$,
\begin{equation}\label{znorm}
\|\sqrt{zz^*+1/n}^{-1}z\|^2=\|z^*(zz^*+1/n)^{-1}z\|=\|z^*z(z^*z+1/n)^{-1}\|\leq1
\end{equation}
(as $F(zz^*)z=zF(z^*z)$ for any polynomial and hence any continuous $F$).  Letting $z_n=z^*\sqrt{zz^*+1/n}^{-1}z=z^*z\sqrt{z^*z+1/n}^{-1}\rightarrow\sqrt{z^*z}$, we have
\begin{align*}
\|v-\sqrt{z^*z}\|&=\lim\|v_n-z_n\|\\
&=\lim\|\sqrt{f}\sqrt{zz^*+1/n}^{-1}z-z^*\sqrt{zz^*+1/n}^{-1}z\|\\
&\leq\|\sqrt{f}-z^*\|\qquad\text{by \eqref{znorm}}\\
&=\|\sqrt{f}-\sqrt{1-\sqrt{\delta}}^{-1}\sqrt{d}\sqrt{f}\|\\
&\leq\|\sqrt{f}-\sqrt{d}\sqrt{f}\|+\sqrt{1-\sqrt{\delta}}^{-1}-1.\\
&\leq \sqrt{\delta}+\sqrt{1-\sqrt{\delta}}^{-1}-1\qquad\text{by \eqref{rootfrootd}}.
\end{align*}
Thus $\|v-\sqrt{z^*z}\|\leq\|\sqrt{f}-z^*\|<O(\delta)$, for some function $O$ with $\lim_{r\rightarrow0}O(r)=0$.  But on bounded subsets, taking adjoints, multiples and square roots is uniformly continuous so $\|\sqrt{f}-z^*\|<O(\delta)$ implies that, for another such function $O'$, we have $\|\sqrt{z^*z}-\sqrt{f}\|=\|\sqrt{z^*z}-\sqrt{\sqrt{f}\sqrt{f}^*}\|<O'(\delta)$.  Thus
\[\|v-\sqrt{f}\|\leq\|v-\sqrt{z^*z}\|-\|\sqrt{z^*z}-\sqrt{f}\|\leq O(\delta)+O'(\delta),\]
proving the claim.

Define $w\in\widetilde{A}$ by
\begin{equation}\label{wdef}
w=v+\sqrt{1-\sqrt{vv^*}}\sqrt{1-\sqrt{v^*v}}.
\end{equation}
Note that if we replaced $v$ above with $\sqrt{f}$ (or any other element of $A^1_+$) we would have $w=1$.  So again by the uniform continuity on $\widetilde{A}^1$ of all the functions involved, $\|v-\sqrt{f}\|<O(\delta)+O'(\delta)$ implies that $\|1-w\|<O''(\delta)$, for another function $O''$ with $\lim_{r\rightarrow0}O''(r)=0$.  In particular, $w$ is invertible so we may let $u\in\widetilde{A}$ be the unitary in the polar decomposition of $w$,
\begin{equation}\label{udef}
u=\sqrt{ww^*}^{-1}w.
\end{equation}
Again, as $\|1-w\|<O''(\delta)$, $\|1-u\|<O'''(\delta)$ for another function $O'''$ with $\lim_{r\rightarrow0}O'''(r)=0$.  In particular, for any $\varepsilon>0$ we could choose $\delta>0$ such that $O'''(\delta)<\varepsilon$ and hence $\|1-u\|<\varepsilon$.

Now let $q,r\in A^{**}$ be the weak* limits of $(vv^*)^n$ and $(v^*v)^n$ respectively, so
\begin{equation}\label{qv=vr}
qv=\lim_n(vv^*)^nv=\lim_nv(v^*v)^n=vr.
\end{equation}
Note $q^\perp$ and $r^\perp$ are the support projections of $1-vv^*$ and $1-v^*v$ respectively so
\begin{equation}\label{qandr}
q\sqrt{1-\sqrt{vv^*}}=0=\sqrt{1-\sqrt{v^*v}}r.
\end{equation}
By \eqref{wdef}, \eqref{qv=vr} and \eqref{qandr}, $qw=qv=vr=wr$ and hence $rw^*=w^*q$.  Thus $qww^*=wrw^*=ww^*q$ so $q\sqrt{ww^*}^{-1}=\sqrt{ww^*}^{-1}q$ and hence, by \eqref{udef},
\begin{equation}\label{qu=ur}
qu=q\sqrt{ww^*}^{-1}w=\sqrt{ww^*}^{-1}qw=ur.
\end{equation}
Further note $q$ is the spectral projection of $vv^*$ corresponding to $1$ so $q\ll vv^*$ and $q$ is maximal for this property, i.e. $x\ll q$, for any $x\ll vv^*$.  In particular, as $a\ll f=vv^*$, by \eqref{vdef}, we have
\begin{equation}\label{allq}
a\ll q.
\end{equation}
Likewise, $r\ll v^*v$.  As $d\ll b$, by assumption, $(1-\sqrt{\delta})^{-1}d\ll b$.  Combined with \eqref{vdef}, we thus have
\[r\ll v^*v\leq(1-\sqrt{\delta})^{-1}d\ll b.\]
Thus $r\ll v^*v\ll b$ (because $x\leq y\ll z$ implies $x\ll z$ and, more generally, $||x-xz||^2\leq||(x-y)_+||+||y-yz||$, by \cite[Proposition 1.3]{bice-vignati}) and hence
\begin{equation}\label{rllb}
r\ll b.
\end{equation}
Using \eqref{qu=ur}, \eqref{allq} and \eqref{rllb}, we thus obtain
\[aub=aqub=aurb=aur=aqu=au.\]
Multiplying by $u^*$ yields $u^*aub=u^*au$, i.e. $u^*au\ll b$, as required.
\end{proof}

What we actually need is the same result for
\[(\ll\circ\ \mathbf{d}\ \circ\ll)(a,b)=\inf\{\delta:c,d\in A^1_+,a\ll c\ll_\delta d\ll b\}.\]

\begin{corollary}\label{<<d<<}
For all $\varepsilon>0$ there is a $\delta>0$ such that, for all $a,b\in A^1_+$, if $(\ll\circ\ \mathbf{d}\ \circ\ll)(a,b)<\delta$, then $\mathbf{u}(a,b)<\varepsilon$.
\end{corollary}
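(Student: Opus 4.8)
The plan is to deduce \autoref{<<d<<} immediately from \autoref{<<h<<} by comparing $\mathbf{d}$ and $\mathbf{h}$ near $0$ on the positive unit ball. Recall from \cite[Proposition 1.3]{bice-vignati} the easy inequality $\tfrac{1}{2}\mathbf{h}\leq\mathbf{d}$ on $A^1_+$, i.e. $\|(c-d)_+\|\leq 2\|c-cd\|$ for all $c,d\in A^1_+$. In particular, $c\ll_\delta d$ implies $c\leq_{2\delta}d$, so any witness $a\ll c\ll_\delta d\ll b$ to $(\ll\circ\ \mathbf{d}\ \circ\ll)(a,b)<\delta$ is also a witness $a\ll c\leq_{2\delta}d\ll b$ to $(\ll\circ\ \mathbf{h}\ \circ\ll)(a,b)<2\delta$.

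Concretely, given $\varepsilon>0$, I would let $\delta_0>0$ be the constant provided by \autoref{<<h<<}, so that $(\ll\circ\ \mathbf{h}\ \circ\ll)(a,b)<\delta_0$ forces $\mathbf{u}(a,b)<\varepsilon$ for all $a,b\in A^1_+$, and then set $\delta=\delta_0/2$. If $(\ll\circ\ \mathbf{d}\ \circ\ll)(a,b)<\delta$, pick (by definition of the infimum) elements $c,d\in A^1_+$ with $a\ll c\ll_\delta d\ll b$; by the previous paragraph $a\ll c\leq_{2\delta}d\ll b$, so $(\ll\circ\ \mathbf{h}\ \circ\ll)(a,b)<2\delta=\delta_0$, and \autoref{<<h<<} yields $\mathbf{u}(a,b)<\varepsilon$, as desired.

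I do not expect any real obstacle: all the work is already contained in \autoref{<<h<<}. The only subtlety worth flagging is the direction of the comparison between the two distances — one needs $\mathbf{h}$ to be controlled by a multiple of $\mathbf{d}$ (so that smallness of $(\ll\circ\ \mathbf{d}\ \circ\ll)$ passes to $(\ll\circ\ \mathbf{h}\ \circ\ll)$), and this is precisely the bound $\tfrac{1}{2}\mathbf{h}\leq\mathbf{d}$; the reverse estimate (bounding $\mathbf{d}$ by a nice function of $\mathbf{h}$) fails in general and, fortunately, is not required here.
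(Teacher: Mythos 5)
Your proposal is correct and is essentially identical to the paper's own proof: both take the $\delta$ from \autoref{<<h<<}, halve it, and use the bound $\mathbf{h}\leq 2\mathbf{d}$ from \cite[Proposition 1.3]{bice-vignati} to convert a witness for $(\ll\circ\ \mathbf{d}\ \circ\ll)(a,b)<\delta_0/2$ into one for $(\ll\circ\ \mathbf{h}\ \circ\ll)(a,b)<\delta_0$. Your remark about the direction of the comparison between $\mathbf{d}$ and $\mathbf{h}$ is also exactly the right point to flag.
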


\begin{proof}
Given $\epsilon>0$ take $\delta>0$ as in \autoref{<<h<<}.  By \cite[Proposition 1.3]{bice-vignati}, $\mathbf{h}\leq2\mathbf{d}$ so $(\ll\circ\ \mathbf{d}\ \circ\ll)(a,b)<\frac{1}{2}\delta$ implies $(\ll\circ\ \mathbf{h}\ \circ\ll)(a,b)<\delta$ and hence $\mathbf{u}(a,b)<\varepsilon$.
\end{proof}

\begin{corollary}\label{SigmaExtension}
If $A$ is $\sigma$-unital C*-algebra, then for every countable $\ll$-directed $B\subseteq A^1_+$ 
there is a countable $\ll$-unit $U$ for $A$ such that $B\subseteq U$.
\end{corollary}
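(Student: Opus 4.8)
The plan is to reduce the statement to the construction of one auxiliary $\ll$-increasing approximate unit of $A$ and then to glue $B$ onto it. First, enumerate $B$ and use that $B$ is $\ll$-directed (noting also that a finite $\ll$-directed set has a top element, which is necessarily a projection and $\ll$-dominates all its members) to produce a $\ll$-increasing sequence $(\beta_n)_{n\in\N}$ in $B$ which is $\ll$-cofinal in $B$, i.e.\ every $b\in B$ satisfies $b\ll\beta_n$ for some $n$; in particular each $\beta_n\in A_\mathrm{c}$, witnessed by $\beta_{n+1}$. It then suffices to build a countable $\ll$-increasing approximate unit $(c_n)_{n\in\N}$ of $A$ with $\beta_n\ll c_n$ for all $n$: then $U:=B\cup\{c_n:n\in\N\}$ is countable, contains $B$, is $\ll$-approximately cofinal (because $(c_n)$ is an approximate unit), and is $\ll$-directed, since a pair from $B$ is handled by $\ll$-directedness of $B$, a pair $c_i,c_j$ by $\ll$-increasingness, and a pair $b,c_j$ by a $c_k$ with $k$ large, using $b\ll\beta_k\ll c_k$.

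Next I would fix, via Proposition \ref{sigma-unital}, a $\ll$-increasing approximate unit $(e_m)_{m\in\N}$ of $A$. The role of Corollary \ref{<<d<<} is to pull the $\beta_n$ into the reach of this fixed approximate unit using small unitaries. Indeed, for any $a\in A^1_+$ having a $\ll$-upper bound $a'\in A^1_+$ — e.g.\ $a=\beta_n$, or any unitary conjugate of it — one has $a\ll a'\ll_{\delta_m}e_m\ll e_{m+1}$ with $\delta_m=\|a'-a'e_m\|\to0$, so $(\ll\circ\,\mathbf d\,\circ\ll)(a,e_{m+1})\to0$ as $m\to\infty$; hence by Corollary \ref{<<d<<}, $\mathbf u(a,e_m)\to0$ as $m\to\infty$. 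So for every such $a$ and every $\varepsilon>0$ there are, for all sufficiently large $m$, a unitary $u\in\widetilde A$ with $\|1-u\|<\varepsilon$ and $u^*au\ll e_m$, equivalently $a\ll ue_mu^*$.

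Finally I would obtain the $c_n$ by a one-sided intertwining in the spirit of Elliott. Choose a summable sequence $\varepsilon_n>0$, and recursively pick unitaries $u_n\in\widetilde A$ with $\|1-u_n\|<\varepsilon_n$, indices $k_1<k_2<\cdots$, and continuous cut-offs $g_n\colon[0,1]\to[0,1]$ with $g_n(0)=0$ and $g_n(1)=1$; put $w_n=u_1\cdots u_n$, so $w_n\to w$ for some unitary $w\in\widetilde A$, and set $c_n:=w_n\,g_n(e_{k_n})\,w_n^*\in A^1_+$. At stage $n$, apply the previous paragraph to the fixed element $w_{n-1}^*\beta_nw_{n-1}$ to find $u_n$ (with $\|1-u_n\|$ as small as desired) and a large $k_n$ with $u_n^*(w_{n-1}^*\beta_nw_{n-1})u_n\ll e_{k_n}$, i.e.\ $\beta_n\ll w_ne_{k_n}w_n^*$, whence $\beta_n\ll c_n$ because $g_n(1)=1$. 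The cut-offs $g_n$ provide the slack needed to turn the merely approximate relations $c_{n-1}\ll_{O(\varepsilon_n)}c_n$ — all that comes directly from $g_{n-1}(e_{k_{n-1}})\ll e_{k_{n-1}+1}\ll e_{k_n}$ together with a small unitary perturbation — into the exact relations $c_{n-1}\ll c_n$, via the usual functional-calculus bookkeeping in which the cut-off widths at stage $n$ are chosen to dominate $\sum_{l>n}\|1-u_l\|$; and $(c_n)$ is an approximate unit because $(g_n(e_{k_n}))_n$ is one and $w_n\to w$, so in the limit one is conjugating by the fixed unitary $w$, which preserves approximate units.

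I expect the delicate step to be the last one, for two related reasons. The conceptual obstruction is the dimension-drop phenomenon behind Lemma \ref{no-way-above}: two positive elements of $A$ need have no common $\ll$-upper bound, so one cannot simply ask $c_n$ to $\ll$-dominate both $\beta_n$ and a large $e_m$; Corollary \ref{<<d<<} is precisely the tool that bypasses this, by rotating $\beta_n$ by a tiny unitary into $\ll$-position beneath a slightly cut-down conjugate of an approximate-unit element, with summability of the rotations ensuring the rotated approximate units converge to one genuine $\ll$-increasing approximate unit. The technical obstruction is then arranging the recursion so that the $(c_n)$ are \emph{exactly} $\ll$-increasing and not only approximately so; this is a standard ``intertwining with margins'', but it is where all the care is needed. (An alternative route first builds the weaker ``$\ll_{\delta_n}$-increasing'' approximate unit $w_ne_{k_n}w_n^*$ with $\sum_n\delta_n<\infty$ and $\beta_n\ll w_ne_{k_n}w_n^*$, then thins it to a genuine $\ll$-increasing approximate unit by functional calculus with $g(1)=1$, which preserves the domination of each $\beta_n$.)
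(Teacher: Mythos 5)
Your overall strategy \textendash\, reduce to producing a countable $\ll$-increasing approximate unit $(c_n)$ with $\beta_n\ll c_n$, then extract it from a fixed $\ll$-increasing approximate unit $(e_m)$ via small unitary rotations supplied by Corollary \ref{<<d<<} \textendash\, is the paper's strategy, and everything up to the intertwining is fine. But the step you flag as delicate is a genuine gap, not routine bookkeeping. With $c_n=w_ng_n(e_{k_n})w_n^*$ for distinct partial products $w_n$, the relation $c_{n-1}\ll c_n$ amounts to $u_n^*g_{n-1}(e_{k_{n-1}})u_n\ll g_n(e_{k_n})$, and no choice of cut-offs can produce this from the approximate relation: if $x,y$ are rank-one projections in $M_2$ at a small nonzero angle, then $x\ll_\varepsilon y$ for small $\varepsilon$, yet $g(x)=x$ and $h(y)=y$ for every continuous $g,h$ vanishing at $0$ with $g(1)=h(1)=1$, while $x\ll y$ fails. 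Upgrading $\ll_\varepsilon$ to $\ll$ is precisely what requires a unitary rotation (that is the content of Theorem \ref{<<h<<}), so the ``usual functional-calculus bookkeeping'' cannot close this step, and your parenthetical alternative (build a $\ll_{\delta_n}$-increasing unit and thin it afterwards by functional calculus) fails for the same reason. Nor can you choose $u_n$ to rotate $g_{n-1}(e_{k_{n-1}})$ and $w_{n-1}^*\beta_nw_{n-1}$ under $e_{k_n}$ simultaneously by applying Corollary \ref{<<d<<} to their sum: the sum of two elements of $A_\mathrm{c}$ need not lie in $A_\mathrm{c}$, which is exactly the dimension-drop obstruction of Lemma \ref{no-way-above} (and $D$ there is separable, so $\sigma$-unitality does not rescue you).

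The missing idea is the commutation trick in the paper's proof. There, only the elements $b_k$ of $B$ are rotated; the approximate unit $(a_n)$ is conjugated once, at the end, by the single limit unitary $u=u_1u_2\cdots$, which preserves the exact relations $a_m\ll a_n$. Crucially, each $u_{k+1}$ is obtained by applying Corollary \ref{<<d<<} \emph{inside} the C*-subalgebra generated by $(u_1\cdots u_k)^*b_{k+1}(u_1\cdots u_k)$, $a_{n_k+1}$ and $1$. Since the already-placed element $(u_1\cdots u_k)^*b_k(u_1\cdots u_k)$ is $\ll$-below each of these generators, it commutes with everything in that subalgebra, hence with $u_{k+1}$ and all later unitaries; so the infinite product stabilizes on it and $u^*b_ku=(u_1\cdots u_k)^*b_k(u_1\cdots u_k)\ll a_{n_k+1}$, giving $b_k\ll ua_{n_k+1}u^*$ exactly. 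This resolves the tension your construction runs into: conjugating the unit by partial products destroys its exact $\ll$-increasingness, while conjugating by the limit would destroy $\beta_n\ll c_n$ unless the tail of the product commutes with $w_n^*\beta_nw_n$ \textendash\, which is what the subalgebra choice guarantees.
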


\begin{proof}
Corollary \ref{<<d<<} allows us to follow the proof of \cite[Lemma 5.3]{farah-katsura}.  First, let $(b_n)_{n\in \N}$ be a
 $\ll$-increasing enumeration of a $\ll$-cofinal subset of $B$.  As $A$ is $\sigma$-unital, we also have a $\ll$-increasing approximate unit $(a_n)_{n\in \N}$ by Proposition \ref{sigma-unital}.  So 
 by Proposition \ref{cf+dir=inc} for any $\varepsilon>0$, we have $n_1$ with
\[b_2\ll_\varepsilon a_{n_1}\]
As long as $\varepsilon>0$ is sufficiently small, Corollary \ref{<<d<<} yields a unitary $u_1\in\widetilde{A}$ with
\[u_1^*b_1u_1\ll a_{n_1+1}\qquad\text{and}\qquad\|1-u_1\|<1/2.\]
Then again, for any $\varepsilon>0$, we have $n_2>n_1$ with
\[u_1^*b_3u_1\ll_\varepsilon a_{n_2}.\]
Again, as long as $\varepsilon$ is sufficiently small, Corollary \ref{<<d<<} yields a unitary $u_2\in\widetilde{A}$ with
\[u_2^*u_1^*b_2u_1u_2\ll a_{n_2+1}\qquad\text{and}\qquad\|1-u_2\|<1/4.\]
In fact, rather than applying Corollary \ref{<<d<<} to the entirety of $\widetilde{A}$, we can apply it to the C*-subalgebra $B$ generated by $u_1^*b_2u_1$, $a_{n_1+1}$ and $1$ to obtain $u_2\in B$.  As $u_1^*b_1u_1$ is $\ll$-below $u_1^*b_2u_1$, $a_{n_1+1}$ and $1$, $u_1^*b_1u_1$ commutes with all the generators and hence with all $b\in B$.  In particular, $u_1^*b_1u_1$ commutes with $u_2$ so
\[u_2^*u_1^*b_1u_1u_2=u_1^*b_1u_1u_2^*u_2=u_1^*b_1u_1.\]
Continuing, we obtain a sequence of unitaries $(u_n)_{n\in \N}\subseteq\widetilde{A}$ with a convergent product
\[u=u_1u_2\ldots\]
such that $u^*b_ku=u_k^*b_ku_k\ll a_{n_k+1}$, for all $k\in\mathbb{N}$.  Thus
\[U=B\cup\{ua_nu^*:n\in\mathbb{N}\}\]
is the required $\ll$-unit  and an extension of $B$.
\end{proof}

\begin{corollary}
If $A$ is $\omega_1$-unital then $A$ is $\ll$-unital.
\end{corollary}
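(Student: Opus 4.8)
The plan is a transfinite recursion of length $\omega_1$ whose successor step is exactly Corollary \ref{SigmaExtension}. Since $A$ is $\omega_1$-unital, Proposition \ref{dcof=au} gives a $\ll$-approximately cofinal subset of $A^1_+$ of cardinality at most $\omega_1$; if it is countable then $A$ is $\sigma$-unital and we are done by Proposition \ref{sigma-unital}, so we may assume it is enumerated as $\{a_\alpha:\alpha<\omega_1\}$. I would then build an increasing continuous chain of separable C*-subalgebras $A_\alpha\subseteq A$, $\alpha<\omega_1$, each equipped with a countable $\ll$-unit $U_\alpha$, arranged so that $a_\alpha\in A_{\alpha+1}$, and so that $\beta<\alpha$ implies $A_\beta\subseteq A_\alpha$ and $U_\beta\subseteq U_\alpha$.

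The recursion runs as follows. Let $A_0$ be the C*-subalgebra generated by $a_0$; being separable it is $\sigma$-unital, so it has a countable $\ll$-unit $U_0$ (by Proposition \ref{sigma-unital} together with Proposition \ref{cf+dir=inc}, or directly by Corollary \ref{SigmaExtension}). Given $A_\alpha$ and a countable $\ll$-unit $U_\alpha$ for it, let $A_{\alpha+1}$ be the C*-subalgebra generated by $A_\alpha\cup\{a_\alpha\}$, which is again separable; since $U_\alpha\subseteq(A_\alpha)^1_+\subseteq(A_{\alpha+1})^1_+$ is countable and $\ll$-directed, Corollary \ref{SigmaExtension} applied inside the $\sigma$-unital algebra $A_{\alpha+1}$ yields a countable $\ll$-unit $U_{\alpha+1}$ for $A_{\alpha+1}$ with $U_\alpha\subseteq U_{\alpha+1}$. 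At a limit $\lambda$, put $A_\lambda=\overline{\bigcup_{\alpha<\lambda}A_\alpha}$ and $U_\lambda=\bigcup_{\alpha<\lambda}U_\alpha$; then $U_\lambda$ is automatically a countable $\ll$-unit for $A_\lambda$: it is $\ll$-directed since any two of its members already lie in a single $U_\gamma$ (which is $\ll$-directed), and it is $\ll$-approximately cofinal in $A_\lambda$ since any $a\in A_\lambda$ is within $\varepsilon/3$ of some $a'\in A_\gamma$ and $U_\gamma$ is approximately cofinal in $A_\gamma$, the resulting error being absorbed because $b\mapsto\|b-bu\|$ is Lipschitz.

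Finally I would verify that $U=\bigcup_{\alpha<\omega_1}U_\alpha\subseteq A^1_+$ is a $\ll$-unit for $A$. It is $\ll$-directed by the argument just used at limit stages. For $\ll$-approximate cofinality in $A$, take $a\in A^1_+$ and $\varepsilon>0$; cofinality of $\{a_\alpha:\alpha<\omega_1\}$ in $A^1_+$ yields $\alpha$ with $a\ll_{\varepsilon/2}a_\alpha$, and since $a_\alpha\in A_{\alpha+1}$ while $U_{\alpha+1}$ is $\ll$-approximately cofinal in $A_{\alpha+1}$, there is $u\in U_{\alpha+1}\subseteq U$ with $a_\alpha\ll_{\varepsilon/2}u$; Proposition \ref{triangle} then gives $a\ll_\varepsilon u$, and the case of general $a\in A$ follows by homogeneity of the norm, exactly as at the end of the proof of Proposition \ref{dcof=au}. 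Hence $A$ contains a $\ll$-unit and is therefore $\ll$-unital, by Definition \ref{def-waybelow-unit} and the remark following it.

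The one substantive ingredient is Corollary \ref{SigmaExtension} (resting in turn on \autoref{<<d<<}, the comparison of $\mathbf{u}$ with the $\ll$-sandwiched version of $\mathbf{d}$); everything else is bookkeeping. The point that needs genuine care is the verification that $U$ is $\ll$-approximately cofinal in all of $A$: this must \emph{not} be routed through density of $\bigcup_{\alpha<\omega_1}A_\alpha$ in $A$, because an $\omega_1$-unital C*-algebra need not have density at most $\omega_1$ (for instance $\B(\ell_2)$ is unital but nonseparable), so one instead chases approximate cofinality through the $a_\alpha$'s via the triangle inequality of Proposition \ref{triangle}.
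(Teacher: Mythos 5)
Your proof is correct and follows essentially the same route as the paper: enumerate a $\ll$-approximately cofinal set of size $\omega_1$ (via Proposition \ref{dcof=au}), build an increasing chain of separable subalgebras with nested countable $\ll$-units by iterating Corollary \ref{SigmaExtension}, and take the union. Your explicit handling of limit stages and of approximate cofinality in all of $A$ (via the $a_\alpha$'s and Proposition \ref{triangle}, rather than via density) just fills in details the paper's one-line conclusion leaves implicit.
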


\begin{proof}
Let $(a_\alpha)_{\alpha<\omega_1}$ enumerate $\ll$-approximately cofinal subset of $A$,
which exists by Proposition \ref{dcof=au}.  For each $\beta<\omega_1$, let $A_\beta$ be the C*-subalgebra of $A$ generated by $(a_\alpha)_{\alpha<\beta}$.  As each $A_\beta$ is separable and hence $\sigma$-unital, Corollary \ref{SigmaExtension} yields a $\subseteq$-increasing $\omega_1$-sequence of countable $\ll$-directed subsets $(U_\beta)_{\beta<\omega_1}$ such that each $U_\beta$ is
 $\ll$-approximately cofinal in $A_\beta$, for every $\beta<\omega_1$.  Thus $U=\bigcup_{\beta<\omega_1}U_\beta$ is $\ll$-directed and $\ll$-approximately cofinal in $A_{\omega_1}$ and hence in $A$, so
 $A$ is $\ll$-unital by Proposition \ref{cf+dir=inc}.
\end{proof}

\section{\texorpdfstring{C*-algebras without $\ll$-Units}{C*-algebras without <<-Units}}\label{counterexamples}

Recall the C*-algebra $D$ we defined in \eqref{Ddef} and the 
projections $p=(p_n)_{n\in \N}$ and $q=(q_n)_{n\in\N}$ in $D$ from Lemma \ref{no-way-above}. 
 Here we show that there are non-$\ll$-unital C*-sublagebras of $\ell_\infty^X(D)$, for certain sets $X$.

\begin{definition}\label{def-family-algebra}
Suppose $X$ is a set and  $\F=\{A_i, B_i: i\in \I\}$ is a family of subsets of $X$ satisfying $A_i\cap B_i=\emptyset$, 
for each $i\in \I$.  Define a projection $\PP_i\in \ell_\infty^X(D)$ by
\[\PP_i(t)=\begin{cases}
p&\text{if }t\in A_i, \\
q&\text{if }t\in B_i,\\
0&\text{otherwise}.\end{cases}\]
$A_\F$ is the C*-algebra generated by $(\PP_i)_{i\in \I}$ in $\ell_\infty^X(D)$.
\end{definition}

\begin{lemma}\label{family-algebra} Suppose  that $\F=\{A_i, B_i: i\in \I\}$ is
a family of subsets of an $X$ satisfying $A_i\cap B_i=\emptyset$ 
for each $i\in \I$ and  such that for every $F: \I\rightarrow [\I]^{\leq \omega}$
there are distinct $i, i'\in \I$ satisfying 
\[\tag{$*$}\label{star}(A_i\cap B_{i'})\setminus \bigcup\{A_j\cup B_j:  j\in (F(i)\setminus \{i\})\cup (F(i')\setminus \{i'\})\}\not=\emptyset.\]
Then $A_\F$ is a  $2$-subhomogenous C*-subalgebra of
$\B(\ell_2(X))$ of density not bigger than $|\F|$ without a $\ll$-unit.
\end{lemma}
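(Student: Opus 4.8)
The plan is to settle the three structural claims quickly and then derive the absence of a $\ll$-unit by pitting condition \eqref{star} against Lemma \ref{no-way-above}. Since $D\subseteq\ell_\infty(M_2)$, unwinding definitions gives $\ell_\infty^X(D)\subseteq\ell_\infty^{X\times\N}(M_2)=\prod_{X\times\N}M_2$, which acts faithfully on $\bigoplus_{X\times\N}\C^2=\ell_2(X\times\N\times\{1,2\})$; since $X$ is infinite (which \eqref{star} forces — e.g.\ if $X$ were finite then $A_\F\subseteq D^{|X|}$ would be separable and hence $\ll$-unital by Proposition \ref{sigma-unital}), this Hilbert space is isometric to $\ell_2(X)$, so $A_\F$ is (isomorphic to) a C*-subalgebra of $\B(\ell_2(X))$. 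Being a C*-subalgebra of a product of copies of $M_2$, $A_\F$ satisfies the standard polynomial identity of degree $4$ (Amitsur--Levitzki), as does every subquotient, so $A_\F$ admits no irreducible representation of dimension $>2$, i.e.\ it is $2$-subhomogeneous. Finally $A_\F$ is generated by the $|\I|\le|\F|$ projections $\PP_i$, so its density is $\le|\F|$.

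For the remaining assertion, suppose towards a contradiction that $U\subseteq(A_\F)^1_+$ is a $\ll$-unit. First I would use $\ll$-approximate cofinality to pick, for each $i\in\I$, an element $u_i\in U$ with $\PP_i\ll_{1/2}u_i$. As $u_i$ lies in the C*-subalgebra generated by $\{\PP_j:j\in\I\}$, it already lies in the one generated by $\{\PP_j:j\in S_i\}$ for some countable $S_i\subseteq\I$, so $F\colon\I\to[\I]^{\le\omega}$ given by $F(i)=S_i$ is well-defined. Applying \eqref{star} to $F$ yields distinct $i,i'\in\I$ and a point
\[t\in(A_i\cap B_{i'})\setminus\bigcup\{A_j\cup B_j:j\in(S_i\setminus\{i\})\cup(S_{i'}\setminus\{i'\})\}.\]

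Now I would evaluate at $t$. The coordinate map $\varphi_t\colon\ell_\infty^X(D)\to D$, $\varphi_t(a)=a(t)$, is a norm-decreasing $*$-homomorphism, and the choice of $t$ gives $\varphi_t(\PP_i)=p$, $\varphi_t(\PP_{i'})=q$, and $\varphi_t(\PP_j)=0$ for all $j\in(S_i\setminus\{i\})\cup(S_{i'}\setminus\{i'\})$. Hence $\varphi_t(u_i)$ lies in the C*-algebra generated by $\{p,0\}$, which is $\C p$; so $\varphi_t(u_i)=\lambda_i p$, where $0\le\lambda_i\le1$ (by positivity and $\|u_i\|\le1$) and $|1-\lambda_i|=\|p-\lambda_i p\|\le\|\PP_i-\PP_iu_i\|<\tfrac12$, forcing $\lambda_i\neq0$. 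Symmetrically, $\varphi_t(u_{i'})=\lambda_{i'}q$ with $\lambda_{i'}\neq0$. Finally, since $U$ is $\ll$-directed, there is $w\in U$ with $u_i\ll w$ and $u_{i'}\ll w$; applying the homomorphism $\varphi_t$ and cancelling the nonzero scalars $\lambda_i,\lambda_{i'}$ yields $p\ll\varphi_t(w)$ and $q\ll\varphi_t(w)$ with $\varphi_t(w)\in D^1_+$ — contradicting Lemma \ref{no-way-above}.

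The structural claims are routine; the crux, and where the combinatorial hypothesis does its work, is the localization of each $u_i$ inside $C^*(\{\PP_j:j\in S_i\})$ followed by the index bookkeeping in \eqref{star}: at the special coordinate $t$ every generator of $u_i$ other than $\PP_i$ itself evaluates to $0$ while $\PP_i$ evaluates to $p$, pinning $\varphi_t(u_i)$ down to $\C p$ (and dually for $u_{i'}$ and $q$). Arranging $t$ to lie in $A_i\cap B_{i'}$ while avoiding $A_j\cup B_j$ for all the relevant indices $j$ is exactly what \eqref{star} delivers, so that is the one step that genuinely uses the hypothesis.
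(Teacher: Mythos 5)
Your proof is correct and follows essentially the same route as the paper's: pick $u_i\in U$ with $\|\PP_i-\PP_iu_i\|$ small, localize $u_i$ in the C*-subalgebra generated by countably many $\PP_j$, apply \eqref{star} to get a coordinate $x$ where $u_i(x)\in\C p$ and $u_{i'}(x)\in\C q$ with nonzero scalars, and contradict Lemma \ref{no-way-above} via $\ll$-directedness. The only difference is that you also spell out the routine structural claims (representation on $\ell_2(X)$, $2$-subhomogeneity, density), which the paper leaves implicit.
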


\begin{proof}
Suppose that $A_\F$ had a $\ll$-unit $U$.  In particular, for every $i\in \I$, 
we have $u_i\in U$ with $\|\PP_i-\PP_i u_i\|<1$.  For each $i\in \I$ we also have countable 
$F(i)\subseteq \I$ such that $u_i$ is in the C*-subalgebra of
$\A_\F$ generated by $(\PP_{j})_{j\in F(i)}$.
By the hypothesis there are distinct $i, i'\in \I$ and  $x\in X$ which belongs to the set in (*). 
Thus, for all $j\in (F(i)\setminus \{i\})\cup (F(i')\setminus \{i'\})$, we have $\PP_{j}(x)=0$ and hence
$u_i(x)\in\mathbb{C}p$ and  $u_{i'}(x)\in\mathbb{C}q$.  

 As $U$ is $\ll$-directed, 
we have $u\in U$ with $u_i,u_{i'}\ll u$ and so  $u_{i}(x)=zp\ll u(x)$ and $u_{i'}(x)=z'q\ll u(x)$,
 for some $z,z'\in\mathbb{C}$.  As $\|\PP_i-\PP_iu_i\|,\|\PP_{i'}-\PP_{i'}u_{i'}\|<1$, 
we have $z,z'\not=0$ and hence $p,q\ll u(x)$, contradicting Lemma \ref{no-way-above}.  Thus $A$ is not $\ll$-unital.
\end{proof}

\begin{example}\label{ad-algebra}
A ZFC construction of a $2$-subhomogeneous C*-subalgebra of $\B(\ell_2(\omega_1))$ of density $\omega_2$
without a $\ll$-unit.
\end{example}
%\begin{proof}
\noindent{\it Construction.}
It is enough to construct a family $\F=\{A_\xi, B_\xi: \xi<\omega_2\}$ of subsets of $\omega_1$
satysfying (*) of Lemma \ref{family-algebra}.  We do it by recursion on $\xi<\omega_2$
requiring that all $A_\xi$s and $B_\xi$s are uncountable and that
$(A_\xi\cup B_\xi)\cap (A_{\xi'}\cup B_{\xi'})$ are countable for all $\xi'<\xi<\omega_2$.
For $\xi<\omega_1$ we choose all the above sets pairwise disjoint.
Now suppose that we are done till $\omega_1\leq \xi<\omega_2$. Renumerate all $\{A_\eta, B_\eta: \eta<\xi\}$
as $\{A^\gamma, B^\gamma: \gamma<\omega_1\}$  and  by recursion on $\gamma<\omega_1$  pick distinct
\begin{itemize}
\item $\alpha_\gamma\in B^\gamma\setminus\bigcup_{\delta<\gamma}
(A^\delta\cup B^\delta\cup \{\alpha_\delta, \beta_\delta\}\cup \gamma)$
\item $\beta_\gamma\in A^\gamma\setminus\bigcup_{\delta<\gamma}
(A^\delta\cup B^\delta\cup \{\alpha_\delta, \beta_\delta\}\cup\gamma)$
\end{itemize}
This can be done as the above sets are uncountable due to the recursive hypothesis about the countable
interesections and uncountable sets. Putting $A_\xi=\{\alpha^\gamma: \gamma<\omega_1\}$ 
and $B_\xi=\{\beta^\gamma: \gamma<\omega_1\}$ preserves
this recursive hypothesis. This completes the construction  of $\F$.

Now suppose that $F: \omega_2\rightarrow [\omega_2]^{\leq\omega}$.  For each $\xi<\omega_2$ let
$\theta_\xi<\omega_1$ be such that $\bigcup\{(A_\xi\cup B_\xi)\cap 
(A_{\beta}\cup B_{\beta}): \beta\in F(\xi)\setminus\{\xi\}\}\subseteq \theta_\xi$.
It can be found since the intersections are countable. Let $\theta<\omega_1$ be such that 
$\Omega_\theta=\{\xi: \theta_\xi=\theta\}$
has cardinality $\omega_2$.  Let $\xi\in \Omega_\theta$ be such that $\Omega_\theta\cap\xi$ is uncountable.
Then when constructing $A_\xi, B_\xi$ there was $\gamma$ with $\theta<\gamma<\omega_1$
such that $(A^\gamma, B^\gamma)=(A_\eta, B_\eta)$  and
$\eta\in \Omega_\theta\cap \xi$. It follows from the construction that $\alpha_\gamma\in (A_\xi\cap B_\eta)\setminus\theta$,
which means that 
$$\alpha_\gamma\in (A_\xi\cap B_\eta)\setminus \bigcup\{A_\beta\cup B_\beta: \beta\in (F(\xi)\setminus \{\xi\})\cup
 (F(\eta)\setminus \{\eta\})\},$$
as required in Lemma \ref{family-algebra}\hfill$\Box$
%\end{proof}
\vskip 13pt
The remaining examples of C*-algebras without $\ll$-units which we present are scattered, and so we will
need a couple of  simple lemmas concerning such C*-algebras and scattered compact spaces.

\begin{lemma}\label{subscattered} Suppose that $A$ is a  scattered C*-algebra and $K$ is a scattered compact 
Hausdorff space. Then the C*-algebra $C(K, A)$ is scattered.
\end{lemma}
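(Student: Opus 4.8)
The plan is to use the spectral characterization of scatteredness already invoked in Lemma~\ref{scattered}: a C*-algebra is scattered if and only if every selfadjoint element has countable spectrum. So I would take a selfadjoint $f \in C(K,A)$ and aim to show $\sigma(f)$ is countable. Since $f$ is a norm-limit of selfadjoint elements with finite range (by uniform continuity of $f: K \to A_{sa}$ and compactness of $K$), and since the spectrum varies upper-semicontinuously, it suffices to handle the case where $f$ takes only finitely many values $a_1, \dots, a_n \in A_{sa}$, on a partition of $K$ into sets that are simultaneously open and closed is too strong — instead I would approximate differently. A cleaner route: show directly that $\bigcup_{k \in K} \sigma(f(k))$ has countable closure. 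For each $k$, $\sigma(f(k))$ is countable since $A$ is scattered. The issue is that there are potentially uncountably many points $k$, so I need a compactness argument to control the union.

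The key step is this. Cover $[-\|f\|, \|f\|]$ by finitely many open intervals of length $< \varepsilon$; for each such interval $I$, the set $\{k \in K : \sigma(f(k)) \cap I \neq \emptyset\}$ is closed in $K$ (by upper semicontinuity of the spectrum and continuity of $f$). Running a Cantor--Bendixson-style induction on the scattered space $K$ simultaneously with an approximation of $f$, one shows the set of ``spectral values attained'' is built from countably many countable sets. Concretely, I would argue by contradiction: if $\sigma(f) = \overline{\bigcup_k \sigma(f(k))}$ were uncountable, it would contain a perfect subset $P$, hence for each $\lambda \in P$ there is $k_\lambda$ with $\sigma(f(k_\lambda))$ meeting every neighborhood of $\lambda$; but then by compactness of $K$ and scatteredness one isolates a point $k_\infty \in K$ and a sequence $\lambda_m \to \lambda_\infty$ in $P$ with witnesses clustering at $k_\infty$, forcing $\sigma(f(k_\infty))$ to be uncountable (or, more carefully, forcing an accumulation structure on $K$ incompatible with $K$ being scattered). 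Either way one reaches a contradiction.

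An alternative and probably slicker approach, which I would try first: the C*-algebra $C(K,A) \cong C(K) \otimes A$ when the tensor product is unambiguous (it is, in the relevant cases, or one works with the minimal tensor product), and scatteredness of C*-algebras is preserved under tensor products of scattered algebras — this is a known fact (see \cite{cantor-bendixson}), and $C(K)$ is scattered precisely because $K$ is scattered. One could then simply cite this. However, to keep the argument self-contained and avoid invoking tensor-product machinery for possibly nonseparable algebras, I expect the paper uses the spectral argument above.

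The main obstacle will be handling the uncountably-many-fibers issue cleanly: proving that $\{k : \sigma(f(k)) \cap I \neq \emptyset\}$ is closed and then extracting the contradiction from a hypothetical perfect subset of $\sigma(f)$ without the argument degenerating into a messy transfinite recursion. The cleanest packaging is likely: reduce to $f$ with finite range via approximation, note that for finite-range $f$ with values $a_1,\dots,a_n$ one has $\sigma(f) \subseteq \bigcup_j \sigma(a_j)$ (a finite union of countable sets), and then pass to the limit using that a countable union of countable sets stays countable together with the fact that $\sigma(f)$ is the closure of $\bigcup_m \sigma(f_m)$ for $f_m \to f$. That last closure step needs $\bigcup_m \sigma(f_m)$ dense in $\sigma(f)$, which follows from norm-continuity of the spectrum in the Hausdorff metric for selfadjoint (indeed normal) elements.
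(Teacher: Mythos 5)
There is a genuine gap, and it is in the step you yourself flag at the end. Knowing that $\sigma(f)=\overline{\bigcup_m\sigma(f_m)}$ with each $\sigma(f_m)$ countable only tells you that $\sigma(f)$ is the \emph{closure} of a countable set, and closures of countable sets are very often uncountable. The failure is not a technicality: your ``cleanest packaging'' (approximate $f$ uniformly by finitely-valued functions on clopen partitions, then pass to the limit via Hausdorff-continuity of the spectrum) uses nothing about $K$ beyond compactness and zero-dimensionality, so if it worked it would equally prove that $C(\{0,1\}^{\N})$ is scattered --- which is false, since the canonical embedding of the Cantor set into $[0,1]$ is a selfadjoint element with uncountable spectrum. (Indeed, the Cantor set is exactly a set of the form $\bigcap_m(\sigma(f_m))_{\varepsilon_m}$ with each $\sigma(f_m)$ finite, so no amount of care with the Hausdorff metric rescues the limit step.) Your perfect-set/Cantor--Bendixson sketch is the only variant that actually tries to use scatteredness of $K$, but it is not carried out; and the tensor-product route rests on a preservation fact you do not verify.

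The idea the paper uses, and which your proposal is missing, is to apply scatteredness of $K$ to the \emph{range} of $f$ rather than to its spectrum: $f[K]$ is a continuous image of a scattered compact Hausdorff space inside the metric space $A$, hence a compact scattered metric space, hence second countable and therefore countable by the Cantor--Bendixson theorem. So $f$ takes only countably many values $a_1,a_2,\dots$, each with countable spectrum since $A$ is scattered. Compactness of $K$ then gives $\sigma(f)=\bigcup_{k\in K}\sigma(f(k))=\bigcup_j\sigma(a_j)$ \emph{exactly} (if $\lambda$ avoids every $\sigma(f(k))$, the continuous function $k\mapsto(f(k)-\lambda)^{-1}$ is bounded on $K$ and inverts $f-\lambda$), so $\sigma(f)$ is a countable union of countable sets with no closure to take. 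This completely sidesteps both the approximation and the uncountably-many-fibers problem you were wrestling with.
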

\begin{proof} We will use the characterization of scattered C*-algebras due to Wojtaszczyk 
(\cite{wojtaszczyk}, cf. Theorem 1.4 (6) of \cite{cantor-bendixson}) as the C*-algebras where all
self-adjoint elements have countable spectrum.  Let $f\in C(K, A)$ be self-adjoint.  The range of $f$ is
a compact scattered metric space (as a subspace of $A$) which is therefore second countable and hence
 countable by the Cantor-Bendixson theorem (1.7.10 of \cite{engelking}). As each value of $f$
has countable spectrum and $K$ is compact, the spectrum of $f$ must be countable as the union of the spectra
of its values.
\end{proof}

Recall that a partial order is said to be \emph{well-founded} if it has no infinite strictly decreasing chains.
Also, by a \emph{$\wedge$-subsemilattice} of a Boolean algebra, we mean a subset which is closed under taking
pairwise infima $\wedge$. A Boolean algebra is said to be \emph{superatomic} if every quotient of it has an atom,
i.e. an element $a>0$ such that $a\geq b\geq0$ implies $b=a$ or $b=0$.  The
class of superatomic Boolean algebras is very well investigated as they are exactly the Boolean algebras arising from the clopen subsets of scattered compact spaces (see e.g. \cite{roitman}, cf. \cite{cantor-bendixson}).

\begin{definition}
We call a Boolean algebra
 \emph{well-generated} if it is generated by a well-founded $\wedge$-subsemilattice.
\end{definition}

\begin{lemma}\label{well-generated}
Every well-generated Boolean algebra is superatomic.
\end{lemma}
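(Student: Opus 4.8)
The plan is to reduce the statement to two sub-lemmas: (i) every homomorphic image of a well-generated Boolean algebra is again well-generated, and (ii) every nontrivial well-generated Boolean algebra contains an atom. Granting these, if $B$ is well-generated and $I\subsetneq B$ is a proper ideal, then $B/I$ is nonzero and well-generated by (i), hence has an atom by (ii); as $I$ was arbitrary, $B$ is superatomic.

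For (ii), let $B\neq\{0\}$ be generated by a well-founded $\wedge$-subsemilattice $S$. If $S\subseteq\{0\}$ then $B=\langle S\rangle=\{0,1\}$ and $1$ is an atom, so assume $S\setminus\{0\}\neq\emptyset$ and use well-foundedness to pick a minimal element $m$ of $S\setminus\{0\}$. For every $s\in S$ we have $s\wedge m\in S$ and $s\wedge m\leq m$, so minimality of $m$ forces $s\wedge m\in\{0,m\}$; thus $\{s\wedge m:s\in S\}\subseteq\{0,m\}$. Since $b\mapsto b\wedge m$ is a surjective homomorphism of $B$ onto the relative algebra $\{b\in B:b\leq m\}$, that algebra is generated by $\{s\wedge m:s\in S\}$ and hence equals $\{0,m\}$; so nothing lies strictly between $0$ and $m$ and $m$ is an atom of $B$.

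For (i), let $B=\langle S\rangle$ as above and $h\colon B\twoheadrightarrow B'$ a surjection. Then $B'$ is generated by $h(S)$, which is closed under $\wedge$ because $h(s)\wedge h(t)=h(s\wedge t)$, so the only issue is whether $h(S)$ is well-founded. Given $h(s_0)>h(s_1)>\cdots$ with $s_i\in S$, I would pass to the partial meets $r_i=s_0\wedge\cdots\wedge s_i\in S$: these are weakly decreasing in $S$, yet $h(r_i)=h(s_0)\wedge\cdots\wedge h(s_i)=h(s_i)$ stays strictly decreasing. But a weakly decreasing chain in a well-founded poset is eventually constant (a minimal element of $\{r_i:i\in\omega\}$ occurs and then persists), so the $h(r_i)$ cannot all differ — contradiction. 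Hence $h(S)$ is well-founded and $B'$ is well-generated.

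The argument is essentially routine; the one slightly delicate point is the well-foundedness of $h(S)$ in (i), which the partial-meet trick above handles together with the elementary fact that weakly decreasing chains in well-founded posets stabilize. An alternative would be to argue via Cantor--Bendixson rank, or via the characterization of superatomic algebras by the absence of a subalgebra isomorphic to the free Boolean algebra on $\omega$ generators, but the quotient route is the most direct given the definitions used here.
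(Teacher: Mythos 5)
Your proof is correct and follows essentially the same route as the paper's: reduce to showing that quotients of well-generated algebras are well-generated (via the partial-meets trick $r_i=s_0\wedge\cdots\wedge s_i$, which is exactly the paper's argument) and that every nontrivial well-generated algebra has an atom obtained from a minimal nonzero generator. The only cosmetic difference is that for the atom step the paper verifies directly that $\{a:a\wedge g\in\{0,g\}\}$ is a subalgebra containing the generators, whereas you relativize below $m$; these are equivalent.
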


\begin{proof}
First we show that quotients of well-generated Boolean algebras are well-generated.  
Say $\pi:A\rightarrow B$ is a Boolean homomorphism and $G\subseteq A$ 
is a well-founded $\wedge$-subsemilattice generating $A$.  
Then $\pi[G]$ generates $\pi[B]$.  If $\pi[G]$ were not well-founded,
 then we would have $(a_n)\subseteq G$ such that $(\pi(a_n))_{n\in\mathbb{N}}$ is strictly decreasing. 
 Then $a'_n=a_1\wedge\cdots\wedge a_n$ would be a strictly decreasing sequence in $G$, contradicting well-foundedness.

To show that well-generated Boolean algebras are superatomic, it thus suffices to show that they always contain an atom.  So say $G$ is a well-founded $\wedge$-subsemilattice generating a Boolean algebra $A$.  If $A$ is the $2$-element Boolean algebra then its top element $1$ is an atom.  Otherwise, $G$ must contain a non-zero element $g$, which we can take to be minimal by well-foundedness.  Let
\[B=\{a\in A:a\wedge g=g\text{ or }0\}.\]
By the minimality of $g$, $B$ contains $G$.  But $B$ is also closed under 
all the Boolean operations $\wedge$, $\vee$ and $^c$ so, as $G$ generates $A$, we must have $A=B$ and hence $g$ is an atom of $A$.
\end{proof}

In fact  \cite{bonnet-robert} contains a slightly different definition of being well-generated 
which refers to lattices rather $\wedge$-semilattices.  
However, the lattice generated by any well-founded 
$\wedge$-subsemilattice within a Boolean algebra is again well-founded,
 so our definition is equivalent (we leave the proof as an exercise).  The above paper
also contains a result
{\cite[Propositon 2.7]{bonnet-robert}} saying that well-generated Boolean algebras are superatomic which
is then equivalent to our Lemma \ref{well-generated}.
We note in passing that not all superatomic Boolean algebras are well-generated 
\textendash\, see \cite[Theorem 3.4]{bonnet-robert}.

%\begin{lemma}\label{chain} Suppose that $A$ is a Boolean algebra of subsets of an $X$ generated
%by  a well-ordered with respect to inclusion chain of subsets of $X$ and by a family of singletons of $X$.
%Then $A$ is superatomic i.e., its  Stone space is a scattered compact space.
%\end{lemma}
%\begin{proof}
%We may assume that the generators which are singletons correspond to
%all singletons of $X$  as subalgebras of superatomic Boolean algebras (algebras of clopen subsets of scattered spaces,
%cf. Section 2 of \cite{cantor-bendixson}) are superatomic.  Consider the quotient of  $A$ by the ideal of
%finite sets. The resulting algebra is generated by a well-ordered chain as  an infinite decreasing sequence
%of images of the generators  under the quotient map would give rise to an infinte decreasing sequence
%of the generators. So the quotient is isomorphic to the Boolean algebra of all clopen subsets of
%an ordinal, i.e,. it is superatomic. It follows that the original
%algebra is superatomic because the ideal generated by singletons generates
%a superatomic algebra (cf. Theorem 2.2. of \cite{cantor-bendixson}).
%\end{proof}

\begin{example}\label{crosses-algebra}
A ZFC construction of a $2$-subhomogeneous scattered C*-subalgebra of $\B(\ell_2(\omega_2))$ of density $\omega_2$
without a $\ll$-unit.
\end{example}
\noindent{\it Construction.}
We will construct an appropriate family $\F=\{A_\xi, B_\xi: \xi<\omega_2\}$ of subsets of 
$X=\{(\xi, \eta): \xi<\eta<\omega_2\}$.
satysfying (*) of Lemma \ref{family-algebra}. This construction is simillar to \cite[Example 2.1]{commuting-aus}\footnote{We would like to thank I. Farah for drawing our attention to \cite{commuting-aus}.}
modulo the use of the free set lemma like in \cite{farah-katsura} which allows us to lower $2^{2^\omega}$ to $\omega_2$.  Put 
$$A_\xi=\{(\xi, \eta): \xi<\eta<\omega_2\}, \   B_\xi=\{(\eta, \xi): \eta<\xi<\omega_2\}.$$
Suppose $F:[\omega_2]\rightarrow [\omega_2]^{\leq\omega}$. 
 Take $\xi_0<\eta_0<\omega_2$ such that $\eta_0\not\in F(\xi_0)$ nor $\xi_0\not\in F(\eta_0)$.
This follows from a stronger result called 
the Hajnal free set lemma (Lemma 19.1 in  \cite{hajnal}) but in this case is very simple (cf. Lemma 2.1. of \cite{farah-katsura}).
Then $(\xi_0, \eta_0)\in A_{\xi_0}\cap B_{\eta_0}$. Moreover $(\xi_0, \eta_0)\in A_\xi$ implies $\xi_0=\xi$
as well as $(\xi_0, \eta_0)\in B_\eta$ implies $\eta_0=\eta$, so $(\xi_0, \eta_0)$ is not in any
$A_\xi, B_\eta$ for $\xi, \eta\in (F(\xi_0)\setminus\{\xi_0\})\cup (F(\eta_0)\setminus\{\eta_0\})$.
It follows that the family $\F$ satisfies \eqref{star} of Lemma \ref{family-algebra} and so $\A_\F$ has no $\ll$-unit.

To prove that $A_\F$ is scattered, first note that  it can be naturally embedded into the algebra $C(K, D)$
where $K$ is the Stone space of the Boolean algebra $B$ of subsets of $X$ generated by $\{A_\xi, B_\xi: \xi<\omega_2\}$.
Note that the elements of this family are either disjoint or have a singleton as the intersection and all singletons
appear as such intersections.  Together with the empty set, they form a well-founded $\wedge$-semilattice (where the singletons have rank $1$ and each $A_\xi$ and $B_\xi$ has rank $2$).  Thus $B$ is well-generated and hence superatomic, by Lemma \ref{well-generated}.  Now Lemma \ref{subscattered} implies that $A_\F$ is scattered. \hfill$\Box$
\vskip 13pt
The following  example will be based on trees. See Section 2.3. for our terminology concerning trees.

\begin{lemma}\label{tree-algebra} Suppose that $\kappa$ is a cardinal and $T$ is a tree
of cardinality and height $\kappa$ satisfying
\begin{enumerate}
\item  $\omega<cf(\kappa)$, 
\item $\kappa<|Br_\kappa(T)|$.
%\item $sup(\{|Lev_\alpha(T)|: \alpha<\kappa\})<|Br_\kappa(T)|$.
\end{enumerate} 
Then $\F(T)=\{A_b, B_b: b\in Br_\kappa(T)\}$ satisfies the hypothesis of Lemma \ref{family-algebra}
for $X=T$ and $A_b=b$, $B_b=b^\#$. Moreover
$A_{\F(T)}$ is a scattered C*-algebra.
\end{lemma}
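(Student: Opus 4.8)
The plan is to prove the two assertions in turn. That $A_b\cap B_b=b\cap b^\#=\emptyset$ is immediate from the definition of the exit set, so the content of the first assertion is condition \eqref{star} of Lemma \ref{family-algebra} (with index set $\I=Br_\kappa(T)$).

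For that, fix $F\colon Br_\kappa(T)\to[Br_\kappa(T)]^{\le\omega}$ and, for each $\kappa$-branch $b$, write $b(\gamma)$ for its unique node on level $\gamma<\kappa$; note that $t\in A_c\cup B_c=c\cup c^\#$ iff $\{s:s\prec t\}\subseteq c$, i.e.\ iff $c$ contains the whole chain below $t$. Put $\beta_b=\sup\{ht(b\cap c):c\in F(b)\setminus\{b\}\}$; each $b\cap c$ is a proper initial segment of $b$ (as $c\neq b$), so $ht(b\cap c)<\kappa$, and since $|F(b)|\le\omega<cf(\kappa)$ we get $\beta_b<\kappa$. Now consider the map $b\mapsto s_b:=b(\beta_b)\in T$. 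As $|T|=\kappa<|Br_\kappa(T)|$ it is non-injective, and partitioning $Br_\kappa(T)$ by the value of $s_b$ gives a class $E$ of size $>\kappa$ (otherwise the total would be $\le\kappa\cdot\kappa=\kappa$). Since $|E|>\kappa\ge\omega_1$, a transfinite recursion of length $\omega_1$ (the elementary free-set argument, cf.\ Lemma 2.1 of \cite{farah-katsura}) produces distinct $b,b'\in E$ with $b\notin F(b')$ and $b'\notin F(b)$; this last point is essential, for if $b'\in F(b)$ then $A_b\cap B_{b'}\subseteq B_{b'}$ is already removed in \eqref{star}. Then $\beta_b=\beta_{b'}=:\beta$ (the same level), $s_b\in b\cap b'$ forces $\alpha:=ht(b\cap b')>\beta$, and $t:=b(\alpha)$ works: $t\in b=A_b$; $t\notin b'$ (else $b\cap b'$ would reach level $\alpha$), while $\{s:s\prec t\}=b\cap b'\subseteq b'$, so $t\in B_{b'}$; and for $c\in(F(b)\setminus\{b\})\cup(F(b')\setminus\{b'\})$ one cannot have $\{s:s\prec t\}=b\cap b'\subseteq c$, since that would force $\alpha\le ht(b\cap c)\le\beta$ or $\alpha\le ht(b'\cap c)\le\beta$. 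Thus $t$ witnesses \eqref{star}.

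For scatteredness I would represent $A_{\F(T)}$ inside $C(K,D)$ for a scattered compact Hausdorff $K$ and then apply Lemma \ref{scattered} ($D$ is scattered), Lemma \ref{subscattered} ($C(K,D)$ is scattered), and the fact that C*-subalgebras of scattered C*-algebras are scattered (self-adjoint elements keep countable spectrum). Concretely, let $K$ be the Stone space of some Boolean subalgebra $B'$ of $\wp(T)$ containing every $b$ and every $b^\#$. Then $\iota\colon t\mapsto\{S\in B':t\in S\}$ maps $T$ densely into $K$, so $f\mapsto f\circ\iota$ is an isometric embedding of $C(K,D)$ into $\ell_\infty^T(D)$ carrying $p\,\chi_{\widehat b}+q\,\chi_{\widehat{b^\#}}$ to $\PP_b$; hence $A_{\F(T)}$ is isomorphic to a C*-subalgebra of $C(K,D)$. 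It remains to pick $B'$ superatomic, and by Lemma \ref{well-generated} it is enough to produce a well-founded $\wedge$-subsemilattice $\mathcal G\subseteq\wp(T)$ whose generated Boolean algebra contains all $b$ and all $b^\#$. I would take $\mathcal G$ to consist of all \emph{stems} (initial segments of $\kappa$-branches) together with all sets $N(S)=\{t\in T:\{s:s\prec t\}\subseteq S\}$, $S$ a stem; since $N(b)=b\cup b^\#$, the exit sets are generated, and a short computation shows $\mathcal G$ is $\cap$-closed (an intersection of stems is a stem, $N(S)\cap N(S')=N(S\cap S')$, and $S\cap N(S')$ is again a stem).

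The step I expect to be the real obstacle is checking that $\mathcal G$ is well-founded. The idea is to rank a stem $S$ by $2\cdot ht(S)$ and a set $N(S)$ (kept in $\mathcal G$ only when it is not already a stem, i.e.\ when it is not a chain) by $2\cdot ht(S)+3$ — an ordinal depending only on the set, being $2\ell+1$ with $\ell$ the supremum of the levels occurring in it. One then verifies that a proper inclusion strictly lowers this rank: $S'\subsetneq S$ and $N(S')\subsetneq N(S)$ both reduce to the easy fact $ht(S')<ht(S)$, whereas the delicate case, a stem $S'$ lying properly inside some $N(S)$, yields only $ht(S)\ge h^-(S')$ (with $h^-(S')$ the predecessor of $ht(S')$ when that is a successor, and $ht(S')$ otherwise); the ``$+3$'' is exactly what keeps $2\cdot ht(S)+3>2\cdot ht(S')$. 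Granting this, $B'=\langle\mathcal G\rangle$ is well-generated, hence superatomic, so $K$ is scattered and $A_{\F(T)}\subseteq C(K,D)$ is scattered.
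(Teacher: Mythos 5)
Your proof is correct and follows essentially the same route as the paper's: a pigeonhole/counting argument over $T$ producing two $\kappa$-branches that agree beyond all the levels controlled by $F$, with the witness $t$ taken at their splitting level, and then scatteredness via a well-founded $\wedge$-subsemilattice together with Lemma \ref{well-generated} and Lemma \ref{subscattered}. Two minor remarks: the free-set step is redundant, since any two distinct branches in the same fiber $E$ already pass through the common node $s_b$ at level $\beta$, so $ht(b\cap b')>\beta\ge ht(b\cap c)$ for every $c\in F(b)\setminus\{b\}$ forces $b'\notin F(b)$ (and symmetrically) automatically; and where the paper works with the $\wedge$-semilattice generated by the branches, exit sets and their intersections, you enlarge it to the stems and cone sets $N(S)$ with an explicit rank function, which is a more laborious but more fully verified version of the same well-foundedness claim.
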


\begin{proof}
Let $F: Br_\kappa(T)\rightarrow [Br_\kappa(T)]^{\leq \omega}$.
 As $cf(\kappa)>\omega$, we can define $\alpha_b<\kappa$ by
\[\alpha_b=\sup_{b'\in F(b)\setminus\{b\}}\min\{\beta<\kappa:b'\cap Lev_\beta(T)\not=b\cap Lev_\beta(T)\}.\]
So, for all $b'\in X_b\setminus\{b\}$ we have $b'\cap Lev_{\alpha_b}(T)\neq b\cap Lev_{\alpha_b}(T)$. 
 
%Put $\theta=sup(\{|Lev_\alpha(T)|: \alpha<\kappa\})$. 
As for all $b\in Br_\kappa(T)$ we have $\alpha_b<\kappa<|Br_\kappa(T)|$, 
 there are $X\subseteq Br_\kappa(T)$ and $\alpha<\kappa$  such that $\kappa<|X|$ 
 and $\alpha_b=\alpha$, for all $b\in X$. 
As $Lev_\alpha(T)\leq|T|=\kappa$ we have distinct $b,b'\in X$ 
such that $b\cap Lev_\alpha(T)=b'\cap Lev_\alpha(T)$.  
Let $\beta\in (\alpha, \kappa)$ be the minimal ordinal such that
 $b\cap Lev_{\beta}(T)\not=b'\cap Lev_\beta(T)$, so if 
$\{t\}=b'\cap Lev_\beta(T)$, 
then $t\in b^\#$, in other words $t$ belongs to
$$(A_b\cap B_{b'})\setminus \bigcup\{A_c\cup B_c:  c\in (F(b)\setminus \{b\})\cup (F(b')\setminus \{b'\})\},$$
as required in Lemma \ref{family-algebra}.

Now we prove that $A_{\F(T)}$ is scattered.  $A_{\F(T)}$ can be considered  as a subalgebra
of $C(K,D)$ where $K$ is the Stone space of the Boolean algebra $B$ generated in $\wp(T)$ by 
the family of sets $\{b, b^\#: b\in Br_\kappa(T)\}$.  The intersection of a branch and an exit-set contains at most one element, while the intersection of two branches is an initial segment of the tree.  As the $\wedge$-semilattice of these intersections is well-founded, $B$ is well-generated and hence superatomic, by Lemma \ref{well-generated}, and hence $K$ is scattered, by Lemma \ref{subscattered}.
\end{proof}

\begin{example}\label{ex-tree-zfc} A ZFC construction of a $2$-subhomogeneous scattered C*-subalgebra of $\B(\ell_2(2^\omega))$ of density $2^\kappa$, for $\kappa$ minimal with $2^\omega<2^\kappa$, without a $\ll$-unit.
\end{example}
\noindent{\it Construction.} Consider the tree $\{0,1\}^{<\kappa}$. By Lemma \ref{minimal-cardinal}
the hypothesis of Lemma \ref{tree-algebra} is satisfied.\hfill$\Box$
\vskip 13pt

\begin{example} \label{ex-tree-can}
A $2$-subhomogeneous scattered C*-subalgebra of $\B(\ell_2(\omega_1))$ of density 
bigger than $\omega_1$ without a $\ll$-unit, assuming that a Canadian tree exists.
\end{example}
\noindent{\it Construction.} Consider a Canadian tree $T$ i.e. of height and
cardinality $\omega_1$ but with more than $\omega_1$ uncountable branches.  By Lemma \ref{minimal-cardinal}
the hypothesis of Lemma \ref{tree-algebra} is satisfied.\hfill$\Box$
\vskip 13pt
\begin{proposition}\label{extension} Suppose that $\F$ is a family satisfying Lemma \ref{family-algebra} 
and that the C*-algebra $A_\F$ is scattered. Then the
 algebra $A_\F$ is an extension of an AF C*-algebra by an AF C*-algebra.
\end{proposition}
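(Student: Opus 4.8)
The plan is to realise $A_\F$ as an extension coming from the canonical decomposition of a $2$-subhomogeneous C*-algebra, and then to note that scatteredness forces both pieces of the extension to be AF. By Lemma \ref{family-algebra}, $A_\F$ is $2$-subhomogeneous, so every irreducible representation of $A_\F$ is $1$- or $2$-dimensional. I would let $J$ be the intersection of the kernels of the $1$-dimensional irreducible representations of $A_\F$. Since $\chi([a,b])=0$ for every $1$-dimensional representation $\chi$ and all $a,b\in A_\F$, the quotient $A_\F/J$ is commutative; and if $\pi$ is an irreducible representation with $\pi(J)\neq0$ then $\pi$ cannot factor through $A_\F/J$, so $\dim\pi=2$. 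Thus $A_\F/J$ is commutative, $J$ is $2$-homogeneous with spectrum the open set $U:=\{[\pi]\in\widehat{A_\F}:\dim\pi=2\}$, and by the structure theory of homogeneous C*-algebras (Fell; see \cite{Blackadar2017}), $J\cong\Gamma_0(\mathcal E)$ for a locally trivial bundle $\mathcal E$ over the locally compact Hausdorff space $U$ with fibre $M_2$ and structure group $\mathrm{Aut}(M_2)$. It then remains to show the two terms of $0\to J\to A_\F\to A_\F/J\to0$ are AF. (I would deliberately use this intrinsic ideal rather than the naive choice — the elements of $A_\F\subseteq\ell_\infty^X(D)$ whose coordinates have limit $0$ — which need not be $2$-homogeneous and would only give a composition series one step too long.)

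For the quotient: scatteredness passes to quotients, since the spectrum of a self-adjoint element of $A_\F/J$ is contained, up to the point $0$, in that of a self-adjoint lift in $A_\F$. So $A_\F/J$ is commutative and scattered, hence $\cong C_0(Y)$ for some scattered, locally compact, Hausdorff $Y$; being scattered and $T_1$, $Y$ is totally disconnected (a connected $T_1$ space with $\geq2$ points contains no isolated point), and being locally compact Hausdorff it is therefore zero-dimensional, so $C_0(Y)$ is the closed directed union of the finite-dimensional subalgebras spanned by indicator functions of compact open partitions of $Y$ — i.e. $A_\F/J$ is AF. For the ideal: scatteredness likewise passes to ideals, and a scattered C*-algebra has scattered spectrum (\cite{cantor-bendixson}), so $U=\widehat J$ is scattered, hence — as above — zero-dimensional with a basis of compact open sets. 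For each compact open $V\subseteq U$ the bundle $\mathcal E|_V$ is locally trivial over the compact space $V$, and since $V$ is zero-dimensional a cover of $V$ by trivialising clopen sets refines to a finite clopen partition, over whose disjoint open pieces the trivialisations simply patch; so $\mathcal E|_V$ is trivial and the ideal $\Gamma_0(V,\mathcal E|_V)\cong C(V)\otimes M_2$ of $J$ is AF, because $C(V)$ is. As $V$ ranges over the compact open subsets of $U$ these ideals form a directed family with dense union in $J$, so pooling the finite-dimensional subalgebras witnessing that each of them is AF exhibits $J$ as a closure of a directed union of finite-dimensional subalgebras; hence $J$ is AF. The degenerate cases $J=0$ (with $A_\F$ commutative) and $A_\F/J=0$ (with $A_\F$ $2$-homogeneous) are harmless.

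The only genuinely non-formal input is the identification of the homogeneous ideal $J$ with a section algebra together with the triviality of $M_2$-bundles over compact zero-dimensional spaces; this is the step I would state most carefully, although over a compact base the triviality reduces to the elementary fact that a clopen cover refines to a finite clopen partition, after which everything is bookkeeping with scatteredness and Stone--Weierstrass.
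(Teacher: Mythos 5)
Your decomposition is genuinely different from the paper's: you take $J$ to be the commutator ideal (equivalently, the $2$-homogeneous part), invoke Fell's structure theorem, and try to deduce AF-ness of $J$ from zero-dimensionality of its spectrum, whereas the paper uses the concrete ideal $I=A_\F\cap\{a:\lim_n a(x)(n)=0\text{ for all }x\}$ and an explicit finite-support approximation. Your treatment of the quotient is fine and essentially matches the paper's. But the last step of your treatment of the ideal has a genuine gap. You show that for each compact open $V\subseteq U$ the ideal $J_V=\Gamma_0(V,\mathcal E|_V)\cong C(V)\otimes M_2$ is AF, and that these ideals are directed with dense union; you then claim that \emph{pooling} the finite-dimensional subalgebras witnessing AF-ness of each $J_V$ yields a directed family. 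It does not, without further work: the trivializations of $\mathcal E|_{V_1}$ and $\mathcal E|_{V_2}$ need not cohere with any trivialization of $\mathcal E|_{V_1\cup V_2}$, and two finite-dimensional subalgebras of $C(W)\otimes M_2$ that are block-constant with respect to incompatible trivializations need not generate a finite-dimensional subalgebra (two projection-valued sections whose pointwise ``angle'' varies continuously already generate an infinite-dimensional algebra). So what you have actually proved is that $J$ is a closure of a directed union of AF ideals, which is precisely the kind of local-to-global passage this paper is about (and is essentially the open question posed at the end of its introduction); it cannot be dismissed as bookkeeping.

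To close the gap you would need a coherent system of trivializations, i.e.\ a partition of $U$ into clopen trivializing sets (equivalently here, global triviality of $\mathcal E$). Your refinement argument gives this only over compact $V$; for a non-paracompact zero-dimensional locally compact space an open cover by compact open sets need not refine to a clopen partition (e.g.\ $\omega_1$ with the order topology admits no partition into bounded clopen sets), so some additional input about the specific spectra arising from $A_\F$ is required. Note also that your stated reason for avoiding the paper's ideal is unfounded: that choice does not lengthen the composition series, and it sidesteps the twisting problem entirely because the generators $\PP_i$ take values in the two fixed projections $p,q$, so every finitely generated subalgebra of the relevant dense subset is already finite-dimensional and the directed family is immediate.
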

\begin{proof}
 Let 
 $$J=\{a\in \ell_\infty^X(D): \lim_{n\rightarrow\infty} a(x)(n)=0\ \hbox{for all}\  x\in X\},  \ \ I=A_\F\cap J.$$
As in  \cite[Remark 3.1.3]{murphy} we have that $(A_\F+J)/J$ and $A_\F/I$ are isomorphic.
But $\ell_\infty^X(D)/J$ is commutative as the homomorphism $h$ sending $a\in\ell_\infty^X(D)$
to $h(a)\in \ell_\infty^X(\C)$ given by $h(a)(x)=z$ if an only if $\lim_{n\rightarrow\infty}a(x)(n)=zP_0$  has
its kernel equal to $J$. It follows that $(A_\F+J)/J$ and so $A_\F/I$ are commutative as well.
Quotients of scattered C*-algebras are scattered and scattered commutative C*-algebras are
of the form $C_0(K)$ for $K$ locally compact and scattered, that
is where every nonempty subset has an isolated point (cf. eg. \cite[2.7, 1.4]{cantor-bendixson}).
But locally compact scattered spaces are totally disconnected by (\cite[6.1. 23]{engelking}, 
cf. \cite[Theorem 2.2, Lemma 2.5]{cantor-bendixson}).
Now to conlude that $A_\F/I$ is an AF C*-algebra it is enough to note that  the C*-algebras generated
by characteristic functions of elements of finite Boolean subalgebras of the Boolean algebra $Clop(K)$ of clopen subsets of $K$ form
a directed family of finite-dimensional algebras whose union is dense in $C_0(K)$.

So we are left with proving that $I$ is an AF C*-algebra. Let $B\subseteq A_\F$ denote the collection
of all elements $a$ of $A_\F$ for which the set $\{a(x): x\in X\}$ is finite. As the generators $\PP_i$ 
for $i\in \I$
belong to $B$ and the C*-algebra operations leave $B$ invariant, $B$ is norm dense in $A_\F$. 
We will also consider the set $C\subseteq B\cap I$ of all elements $a$ of $B$ such that
for each $x\in X$ we have $a(x)(n)=0$ for all but finitelly many $n\in \N$.
  It is clear that  that
 the C*-algebras generated by finitelly many elements of $C$ are finite-dimensional. 
 We will prove that any element of $I$ can be approximated by an element of $C$, which will
 imply that the finite-dimensional subalgebras of $C$ form a directed family whose union is dense in $I$.

Let  $a\in I$  and $\varepsilon>0$, take an $a_1\in B$ with $\|a-a_1\|<\varepsilon/3$.
We have 
$$\|{{a+a^*}\over 2}-{{a_1+a_1^*}\over 2}\|,  \|{{a-a^*}\over 2i}-{{a_1-a_1^*}\over 2i}\|<\varepsilon/3$$
and both 
$${{a_1+a_1^*}\over 2}\  \hbox{and}\  {{a_1-a_1^*}\over 2i}$$
 are in $B$. 
So we may assume that both $a$ and $a_1$ are selfadjoint, and finding $f\in C_0(\sigma(a_1))$
such that $f(a_1)\in C$ with $\|a-f(a_1)\|<\varepsilon$ will take care of a general case of $a$.

As $\lim_{n\rightarrow\infty}a(x)(n)=0$ for each $x\in X$ we have that $\|a_1(x)(n)\|<\varepsilon/3$ for all but finitely many
$n\in \N$ and all $t$. Using the fact that self adjoint elements in scattered C*-algebras 
have countable spectra (\cite[1.4 (6)]{cantor-bendixson}) find $f\in C_0(\sigma(a_1))$  such that
$f|[-r, r]=0$ and $f(t)=t$ for $t\in \sigma(a_1)\setminus[-r, r]$ for some $r\in (\varepsilon/3, 2\varepsilon/3)\setminus
\sigma(a_1)$.

We have $\|f(a_1)-a_1\|<2\varepsilon/3$ and so $\|a-f(a_1)\|<\varepsilon$.
On the other hand $f(a_1)\in B$  since $f(a_1)(t)=f(a_1(t))$ and $a_1\in B$. 
Also if $\|a_1(x)(n)\|<\varepsilon/3$, then $f(a_1)(x)(n)=0$ as $f$ is applied coordinatewise
and $\sigma(a_1(x)(n))\subseteq [\varepsilon/3, \varepsilon/3]$ for such $n\in \N$.
But $\|a_1(x)(n)\|<\varepsilon/3$ for all but finitely many $n\in \N$ for each $x\in X$
as $\|a-a_1\|<\varepsilon/3$ and $\lim_{n\rightarrow\infty}a(x)(n)=0$ for all $x\in X$
since $a\in I$. This completes the construction of the required element of
$C$ which approximates $a\in I$.

\end{proof}

Now we show that consistently there are C*-subalgebras
of $\B(\ell_2)$ which are not $\ll$-unital.

\begin{definition}\label{separated}
$\mathcal S\subseteq \wp(\N)$ is called separated if for every $\mathcal X\subseteq \mathcal S$
 there is $Z_{\mathcal{X}}\subseteq \N$ such that $X\setminus Z_{\mathcal{X}}$ is finite
if $X\in\mathcal{X}$ and $X\cap Z_{\mathcal{X}}$ is finite if $X\in\mathcal{S}\setminus\mathcal{X}$.
\end{definition}

It is consistent with ZFC to have uncountable separated families, even in the presence of a Kurepa tree. 
 Indeed, by \cite[Theorem 3.9]{hrusak} or \cite[Propositions 2.1. and 2.2]{hrusak-fernando}, 
there are uncountable separated families if and only if there are uncountable $Q$-sets,
 i.e. subsets $Y$ of $\mathbb{R}$ such that every $Y'\subseteq Y$ is a
 relative $G_\delta$-set of $Y$.  Starting with the constructible
 universe $L$, we can use a c.c.c. forcing to obtain MA$+\neg$CH.  
In particular, cardinals and hence the Kurepa tree in $L$ is preserved, 
while all subsets of $\R$ of cardinality $\omega_1$ become $Q$-sets (see \cite[Theorem 4.2]{miller}).

Note that the existence of separated families of size $\omega_1$ implies $2^\omega=2^{\omega_1}$ (in particular, CH fails).

\begin{example}\label{Bell2}
 A $2$-subhomogeneus C*-subalgebra of  $\B(\ell_2)$ which is LF but not $\ll$-unital
under the hypothesis of the existence of an uncountable $Q$-set.
\end{example}

\noindent{\it Construction.}
As explained above the existence of an uncountable $Q$-set is equivalent to the existence
of an uncountable separated family.
We may assume that our separated family $\{S_\alpha: \alpha<\omega_1\}\subseteq \wp(\N)$ has cardinality $\omega_1$. 
Let $\F=\{A_\xi, B_\xi: \xi<\omega_2\}$ be as in Example \ref{ad-algebra}. 
 For any $\xi<\omega_2$ we have disjoint $Y_\xi, Z_\xi\subseteq \N$
 such that $S_\alpha\setminus Y_\xi$ is finite if $\alpha\in A_\xi$, $S_\alpha\setminus Z_b$ is finite if $\alpha\in B_\xi$ and
$S_\alpha\cap (Y_\xi\cup Z_\xi)$ is finite if $\alpha\not\in A_\xi\cup B_\xi$.
Define $a_\xi\in \ell_\infty(D)$ by
\[a_\xi(n)=\begin{cases}
p&\text{if } n\in Y_\xi, \\
q&\text{if } n\in Z_\xi,\\
0&\text{otherwise}.\end{cases}\]
Note that $a_\xi$ has a (norm) limit on $S_\alpha$, for each $\xi<\omega_2$ and $\alpha<\omega_1$. 
 Thus we can define a homomorphism $\pi$ on the C*-subalgebra $B$
 generated by $(a_\xi)_{\xi<\omega_2}$ by
\[\pi(a)(\alpha)=\lim_{n\in S_\alpha}a(n).\]
As $\pi(a_\xi)=\PP_\xi$
from Definition \ref{def-family-algebra} of the  algebra $A_\F$  for each $\xi<\omega_2$ ($\PP_\xi=\PP_i$
for $\I=\omega_2$), we have that $\pi[B]=A_{\F}$.  
By Theorem \ref{ad-algebra}, $A_{\F}$ is not $\ll$-unital so neither is $B$, by Lemma \ref{aiau-quotient}.

To prove that $B$ is LF it is enough to prove that 
C*-subalgebras of $B$ generated by finitely many $a_\xi$, 
for $\xi<\omega_2$, are LF.  But each $a_\xi$ is constant
 on $Y_b$, $Z_b$ and $\N\setminus(Y_b\cup Z_b)$.  
Thus it follows that any $a\in C^*(a_{\xi_1},\cdots,a_{\xi_n})$
 is constant on the corresponding intersections of these sets.  
As there are at most $3^n$ of these, we see that
 $C^*(a_{\xi_1},\cdots,a_{\xi_n})$ is isomorphic to a
 C*-subalgebra of $D^{3^n}$.  But $D$ is a scattered C*-algebra by
 Lemma \ref{scattered} and hence so is every finite power of $D$ 
(e.g. because scattered C*-algebras are precisely those whose
 selfadjoint elements all have countable spectrum \textendash\, 
see \cite[1.4]{cantor-bendixson}).  Hence the C*-subalgebra of 
$D^{3^n}$ isomorphic to $C^*(p_{\xi_1},\cdots,p_{\xi_n})$ is LF,
 by \cite{kusuda} (where LF is called AF).
\hfill$\Box$
\vskip 13pt

To obtain a subalgebra of $\B(\ell_2)$ like in Theorem \ref{Bell2} which is additionally scattered,
we need a separated family $\mathcal X$ with additional  properties which do not follow
from the existence of a $Q$-set. To obtain the consistency of the hypothesis of the next
example we start with a model 
 of ZFC where there is a Canadian tree $T$ and $2^{\omega_1}=\omega_2$,
for example the constructible universe. Now we obtain a generic extension as in \cite{bfz}.
The forcing used is c.c.c., so it preserves cardinals, in particular $T$ remains a Canadian tree.
In the generic extension we have Martin's Axiom  for $\sigma$-linked forcings, $2^\omega=\omega_2$ and every
Boolean algebra of cardinality $2^\omega$ embeds into the Boolean algebra $\wp(\N)/Fin$
of subsets of $\N$ modulo finite sets.  One notes that $2^{\omega_1}=2^\omega$
in this model since already Martin's axiom for countable partial orders implies it (16.20 of \cite{jech}),
so it follows that there is a Boolean embedding $E: \wp(T)\rightarrow \wp(\N)/Fin$
as $T$  has cardinality $\omega_1$.

\begin{example}\label{Bell2scattered}
A $2$-subhomogeneus C*-subalgebra of
  $\B(\ell_2)$  which is scattered but  not $\ll$-unital, in particular not AF under the hypothesis
of the existence of a Canadian tree and a Boolean embedding of $\wp(\omega_1)$ into $\wp(\N)/Fin$.
\end{example}

\noindent{\it Construction.}
Let $T$ be a Canadian tree and  $E: \wp(T)\rightarrow \wp(\N)/Fin$ a Boolean embedding.
Let $A_{\F(T)}$ be as in Example \ref{ex-tree-can}.
Let $\mathcal S=\{S_t: t\in T\}\subseteq \wp(\N)$ be given by any choice of $S_t\in E(\{t\})$
for all $t\in T$.  Note that any elements
$Z_{\mathcal X}\in E(\mathcal X)$ for $\mathcal X\subseteq T$ witness the  the fact that $\mathcal S$
is a separated family. So construct $B$ as in Example \ref{Bell2}.

Let $\pi: B\rightarrow A_{\F(T)}$ be the quotient map.  
It suffices to prove that $Ker(\pi)$ is a scattered C*-algebra,
 as extensions of scattered algebras by scattered algebras are themselves scattered, 
by \cite[Theorem 1.4 (2) or (3)]{cantor-bendixson}.  
Specifically, we prove that $Ker(\pi)$ is isomorphic to a subalgebra of $c_0(D)$.  
This is scattered because $D$ is scattered (Lemma \ref{scattered})
 and $\sigma(a)=\{0\}\cup\bigcup_n\sigma(a(n))$, 
for all self-adjoint $a\in c_0(D)$, so every $a\in c_0(D)$
 has countable spectrum (which characterizes scattered C*-algebras,
 again by \cite[1.4]{cantor-bendixson}).

First note that $Ker(\pi)$ contains $B\cap c_0(D)$, as $\pi$ is defined 
by limits on infinite subsets (in fact, we could let $B$ be generated by
 $(a_b)_{b\in Br_{\omega_1}(T)}$ and $c_0(D)$, without affecting
 the definition of $\pi$, and then $c_0(D)\cap B=c_0(D)$).  
Thus $\pi$ induces a homomorphism $\pi'$ on $B/(B\cap c_0(D))$, 
which we claim is an isomorphism.  For this it suffices to show that $\pi'$ is
 norm preserving on the image in $B/(B\cap c_0(D))$ of the
 dense *-subalgebra generated by $(a_b)_{b\in Br_{\omega_1}(T)}$.  
So take $a\in C^*(a_{b_1},\cdots,a_{b_n})\subseteq B$, for some
 $b_1,\cdots, {b_n}\in Br_{\omega_1}(T)$.  As in the argument for 
LF in Example \ref{Bell2}, the element $a$ is constant on 
the sets taken from a finite partition of $\N$ obtained by intersecting the
 sets $Y_{b_i}$, $Z_{b_i}$ and $\N\setminus(Y_{b_i}\cup Z_{b_i})$
 for $1\leq i\leq n$.  On one of these sets, say $X\subseteq\N$,
 the norm of the element $a$ assumes the norm of $[a]_{c_0(D)}$.  
As $[Y_{b_i}]_{Fin}$, $[Z_{b_i}]_{Fin}$ and $[\N\setminus(Y_{b_i}\cup Z_{b_i})]_{Fin}$ 
are in $E[\wp(T)]$ for $1\leq i\leq n$ (in fact they are $E(b_i)$, $E(b_i^\#)$ 
and $E(T\setminus(b\cup b^\#)$), $[X]_{Fin}$ is in $E[\wp(T)]$ as well.
  It follows that we have $t\in T$ such that $E(\{t\})\leq[X]_{Fin}$
 because below any nonzero element of $\wp(T)$ we have an
 element of the form $\{t\}$. This means that $S_t\setminus X$ is finite. 
 Using the definition of $\pi(a)(t)$ as $\lim_{n\in S_t}a(n)$, we conclude 
that $\|\pi(a)\|\geq\|[a]_{c_0(D)}\|$.  The reverse inequality
 is immediate so $\pi'$ is norm preserving.
\hfill$\Box$
\vskip 26pt

%\newpage

\bibliographystyle{amsplain}

\end{document}